\newcommand{\MFN}{\widetilde{\textrm{FN}}}
\newcommand{\FN}{\textrm{FN}}
\newcommand{\rk}{\textrm{rk}\,}
\newcommand{\mc}{\max_{\le}}
\newcommand{\sd}{\textrm{sd}}
\newcommand{\Fox}{\textrm{Fox}}
\newcommand{\ev}{\textrm{ev}}
\newcommand{\Sat}{\textrm{Sat}}
\newcommand{\FNP}{\textrm{FN}^{\le}}
\newcommand{\Ff}{   \textrm{sd } \mathcal{F}}
\newcommand{\Conf}{\textrm{Conf}}
\newcommand{\Emb}{\textrm{Emb}}
\newcommand{\Kons}{\textrm{Konts}}
\newcommand{\sSet}{\textbf{sSet}}
\newcommand{\Ass}{\textbf{Ass}}
\newcommand{\sse}[1]{\mathscr{#1}}
\DeclareMathOperator{\Tot}{Tot}
\newcommand{\vect}[1]{\underline{#1}}
\newcommand{\Nerve}{\mathcal{N}}
\newcommand{\bt}[1]{ {#1}_{\bullet} }
\newcommand{\res}{\textrm{res}}
\newcommand{\Id}{\textrm{Id}}
\newcommand{\Chains}{\textbf{Ch}^{-}(\mathbb{Z}) }
\newcommand{\FNC}{\textrm{FN}^{\textrm{sd}}}
\newcommand{\BFNA}{\mathcal{B}(\FN_m,A)}
\newcommand{\BZ}{\textrm{sd\,}\NBZ}
\newcommand{\NBZ}{\mathrm{BZ}}
\newcommand{\bit}{\begin{itemize}} 
\newcommand{\eit}{\end{itemize}}
\newcommand{\PP}{\mathcal{P}}
\newcommand{\R}{\mathbb{R}}
\newcommand{\F}{\mathbb{F}}
\newtheoremstyle{mystyle}
  {3pt} 
  {3pt} 
  {} 
  {} 
  {\bfseries} 
  {.} 
  {.5em} 
  {} 
\theoremstyle{mystyle} 
\newtheorem{theorem}{Theorem}[section]
\newtheorem{definition}[theorem]{Definition}
\newtheorem{lemma}[theorem]{Lemma}
\newtheorem{remark}[theorem]{Remark}
\newtheorem*{recall*}{Recall}
\newtheorem*{notation}{Notation}
\newtheorem{proposition}[theorem]{Proposition}
\newtheorem{example}[theorem]{Example}
\theoremstyle{plain}
\newtheorem*{bd-theorem}{Theorem \ref{bdthm}}
\newtheorem*{mx-theorem}{Theorem \ref{sinha-mcpx}}
\newtheorem*{nc-theorem}{Theorem \ref{nc-theorem}}
\newtheorem*{ncc-theorem}{Theorem \ref{ncc-theorem}}
\newenvironment{code}
    {\begin{center}\ttfamily}
    {\end{center}}
\newtheoremstyle{spaced}
  {1em} 
  {1em} 
  {} 
  {} 
  {\bfseries} 
  {.} 
  {.5em} 
  {} 
\theoremstyle{spaced}
\newtheorem{software}[theorem]{Software}
\title{Non collapse of the Sinha spectral sequence for knots in $\R^3$}
\author{Andrea Marino}
\address{University of Victoria}
\author{Paolo Salvatore}
\thanks{The authors acknowledge the 
	MUR Excellence Department Project awarded to the Department of Mathematics, University of Rome Tor Vergata, CUP E83C18000100006} 
\address{Universit\`a di Roma Tor Vergata}
\begin{document}





\begin{abstract}
We give  an explicit description up to the third page of the Sinha homology mod 2 spectral sequence 
for the space of long knots in $\R^3$, 
that is conjecturally equivalent to the Vassiliev spectral sequence. 
The description arises from a multicomplex structure on the Fox Neuwirth chain complexes for euclidean configuration spaces.
A computer assisted calculation reveals a non trivial third page differential from a 2-dimensional class, in contrast to the rational case. 
\end{abstract}

\maketitle

\setcounter{tocdepth}{2}
\tableofcontents

\section{Introduction}
A  classical knot invariant can be thought of as an element of the zero dimensional cohomology group of the space of long knots in $\R^3$.  This perspective has become important since the groundbreaking work of Vassiliev \cite{Vassiliev}, who constructed his celebrated spectral sequence
approximating the cohomology of spaces of knots in $\R^m$ via resolutions of singularities.
 Vassiliev found combinatorial formulas \cite{vas-computation,vas-combinatorial} for 
some cohomology classes by making higher differentials explicit; in some cases, these admit a beautiful geometric interpretation \cite{teiblum-turchin, budney}. 
For $m=3$ and in degree 0 the spectral sequence does not see all invariants, but only those of \textit{finite type}. 
One main question in the study of finite-type invariants is the following: can any two knots be distinguished by a finite-type knot invariant? 
Though a (dis)proof is out of reach, partial progress in the understanding of Vassiliev invariants has been recently achieved thanks to another paradigm shift within the field: the use of \textit{embedding calculus}, introduced by Goodwillie and Weiss \cite{weiss,weiss-2}. 
The functor of embeddings from open subsets of the interval to $\R^m$ provides a homotopical approximation to the space
 $\Emb_m$ of long knots in $\R^m$ via its so called \textit{Taylor tower}. The $k$-th stage of this tower turns out to be determined by configuration spaces 
 of at most $k$ distinct points in $\R^m$, together with structural maps between them. Sinha, in its influential paper \cite{sinha2004operads}, shows that the embedding tower for the space of long knots modulo immersions $\overline{\Emb}_m$ arises from a  cosimplicial space of compactified configuration spaces $\Kons_m(\bullet)$, that he calls the Kontsevich spaces \cite{sinha2009topology}. 
Sinha's spectral sequence is the (co)homology spectral sequence of this cosimplicial space. Its second page coincides, up to a shift of bigradings, with the first page of the Vassiliev spectral sequence \cite{tourtchine2005bialgebra} for all $m \ge 2$. Even though they both converge to the same object for $m \ge 4$, the equivalence between the two is still conjectural. However, with rational coefficients and $m \ge 4$, it is verified because both spectral sequences collapse at the second (resp. first) page. 
\cite{volic}. Another bridge between the two perspectives comes from the recent work \cite{boavida,conant,kosanovic,kosanovic2} 
connecting the Taylor tower for spaces of knots with \textit{additive}\footnote{With respect to the connected sum of knots} finite-type invariants. Perhaps these facts, together with its easier description, have made the Sinha spectral sequence gain prominence over that by Vassiliev recently.

Turchin, Lambrechts, Volic \cite{Lambrechts_2010} and Tsopmene \cite{tsopmene} have proved that the Sinha spectral sequence collapses at the second page for $m\ge 3$, with rational coefficients. The proof relies on the formality of the pair of little disk operads $(\mathbb{E}_m, \mathbb{E}_1)$ over $\mathbb{Q}$.  Such collapse, together with explicit calculations by Turchin himself \cite{tourtchine2005bialgebra}, provides a deep insight into the structure of the rational cohomology of spaces of knots.
In finite characteristics Boavida and Horel \cite{boavida} have showed that the Sinha spectral sequence collapses in a range. Notice that the range is empty in characteristic 2. Little else is known in positive characteristics. In ambient dimension $m=2$, the spectral sequence does not collapse \cite{willwacher,goodwillie} over $\mathbb{Z}, \mathbb{Q},$ and $\mathbb{F}_p$ for any prime $p$.
This should not be surprising: the formality of $\mathbb{E}_m$, which is the main ingredient of the proof, fails to hold in general, as proved by the second named author \cite{salvatore2018planar} for $m=2$ over $\mathbb{F}_2$. 
In principle it is possible to write down explicit formulas for all differentials of the spectral sequence using 
appropriate combinatorial models for euclidean configuration spaces, like for example 
the surjection complexes introduced by McClure and Smith \cite{smith}.
However this approach turns out to be impractical\footnote{See section \ref{configurations} for further explanation.}. 
For computations it is essential to use complexes that are as small as possible. 

The smallest explicit cellular decomposition of the (one-point compactified) euclidean configuration spaces known to the authors is the so-called Fox-Neuwirth decomposition, originally due to Nakamura \cite{Nakamura}, 
 and much later rediscovered by Giusti and Sinha \cite{Giusti}. Cells of $\Conf_n(\mathbb{R}^m)$ are indexed by the so-called Fox-Neuwirth trees $\FNP_m(n)$, and these form a poset induced by the inclusions of cells. 
A Fox-Neuwirth tree is represented by a string of inequalities $\sigma(1) <_{i_1} \dots <_{i_{n-1}} \sigma(n)$ where $\sigma \in \Sigma_n$ is a permutation 
and the indice $i_j$ take values in $\{0,\dots,m-1\}$.  The corresponding cell of the configuration space has codimension equal to the sum 
of the indices $\sum_{j=0}^{n-1}{i_j}$ and consists of the $n$-tuples $(x_1,\dots,x_n)$ such that $x_j$ and $x_{j+1}$ have the first $i_j$
coordinates equal, and $(x_j)_{i_j+1} <(x_{j+1})_{i_j+1}$.
Now there is a dual cell decomposition of a regular CW complex $\NBZ_m(n)$ that is a deformation retract of the configuration space
$\Conf_n(\mathbb{R}^m)$
, and such that the cell indexed by a 
Fox Neuwirth tree has {\em dimension} equal to the sum of indices. The construction goes back to De Concini-Salvetti 
\cite{DeConcini} and Blagojevic-Ziegler \cite{blagojevic2013convex}.  Let $\FNP_m(n)$ be the poset of cells of $\NBZ_m(n)$, ordered by inclusion. 
 
 Mimicking the cosimplicial structure on Kontsevich spaces defined by Sinha, we are able to turn $\FNP_m(\bullet)$ into a cosimplicial poset (Section \ref{fnposet}). For example for $1 \leq i \leq m$ the co-degeneracy $d_i: \Kons_m(n) \to \Kons_m(n+1)$ in the Kontsevich space can be thought as replacing the $i$-th points of a configuration by two very close points $x_i,x_{i+1}$ such that 
 the oriented direction from $x_i$ to $x_{i+1}$ is "vertical" i.e. a positive multiple of the last canonical basis vector.
  The corresponding co-degeneracy $d_i$ in $\FNP_m(n)$ replaces the symbol $i$ in a string of inequality defining a Fox Neuwirth tree by $i<_{m-1} i+1$.  
In both cases one needs also to relabel adding one to the indices greater than $i$. 
By taking the nerve, followed by normalized simplicial chains, we get a (semi)cosimplicial chain complex $\FNC_m( \bullet)$. We call the associated total bicomplex the \textit{Barycentric Fox-Neuwirth bicomplex} (Def. \ref{bar-fn}), since its generators are ascending chains of trees, i.e. cells of the barycentric subdivision of $\NBZ_m(\bullet)$. This bicomplex has two commuting differentials $D_0$ and $D_1$. The first is the differential 
of the chain complex of the barycentric subdivision, and the second comes from the alternated sum of co-degeneracies. 
In the companion paper \cite{marinofox}, the first named author proves that the spectral sequence arising from the Barycentric Fox-Neuwirth bicomplex is isomorphic to the Sinha spectral sequence from the first page on.

Still, for explicit computations, the Barycentric Fox-Neuwirth bicomplex is too large to implement. To overcome this obstacle, we draw from homotopy transfer ideas: since the cellular chain complex of a barycentric subdivision is homotopy equivalent to the original cellular chain complex, it is possible to deform the bicomplex structure on the Barycentric Fox-Neuwirth complexes to a multicomplex structure on the original Fox-Neuwirth complexes. 

We are able to guess the first steps of this structure defining a truncated multicomplex $\MFN_3$ with differentials 
$D_i:\MFN_3(n)_d \to \MFN_3(n+i)_{d+i-1}$ for $i=0,1,2,3$, where $\MFN_3(m)_d$ is the $\F_2$-vector space with basis 
Fox Neuwirth trees in $\FNP_3(n)$ indexing cells of $\NBZ_m(n)$ of dimension $d$.  Here $D_0$ is the differential of the chain complex of
$\NBZ_m(n)$. Instead $D_1$ will be a sum of "co-degeneracies" $D_1^i$  wanting to replace $i$ by the expression $i <_0 i+1$, that geometrically looks like replacing the point 
$x_i$ by two very close points $x_i,x_{i+1}$ so that the direction from the first to the second is "horizontal" i.e. a positive multiple of the {\em first} canonical basis vector. 
This forces to distribute the other points sharing coordinates with $x_i$ into two subsets sharing coordinates with $x_i$ and $x_{i+1}$ in all possible ways, and take the sum.   For example 
$$D_1^3(13|2)= 13|2 || 4 + 3 || 14|2 + 13|| 4|2 + 3|2||14,$$
splits $3$ into $3$ and $4$, and distributes $1,\, 2$.
We use a notation replacing  $<_0, <_1$ and $<_2$ by
a double bar, a single bar, and an empty space respectively. 

The next operator $D_2$ is a sum of contributions $D_2^{ij}$ associated to distinct pairs $i$ and $j$, that wants to split simultaneously
$x_i$ into $x_i, x_{i+1}$, and $x_j$ into $x_j, x_{j+1}$, in the horizontal direction. 
If $x_i$ and $x_j$ share just one coordinate, then also their clones will share one coordinate, i.e. for example
$D_2(1|2)=(1|3 || 2 | 4)$ (here $1$ splits into $1$ and $2$, and $2$ splits into $3$ and $4$).  If $x_i$ and $x_j$ share two coordinates, then the second coordinate can be shared only by one of the pairs (either the original or the 
clone), in two possible ways, so we have $D_2(12)=13|| 2|4 + 3|1||2 4 $.
All other points are distributed according to a suitable rule, described by the theory of Fox polynomials we introduce in section \ref{foxpoly}. 
The last operator $D_3$ is a sum of contributions $D_3^{ijk}$ associated to distinct triples of $i,j,k$, wanting to split simultaneously $x_i,x_j,x_k$ into two each and has various terms according 
to the number of shared coordinates. The most notable contribution arises when $i,j,k$ share all 2 coordinates:
it has 10 terms, 
(see figure \ref{core-positions}) and the most interesting for our calculation are the terms $13||25||46$ and $35||16||24$ in $D_3(123)$, where $1,2,3$ split respectively into $\{1,2\},\{3,4\},\{5,6\}$. Even in this case the theory of Fox polynomials defines how to distribute the other points, and moreover
enables us to prove that indeed we have a truncated multicomplex satisfying the identities
$\sum_{i+j=k} D_i D_j =0$ for $k\leq 3$. The full verification is formal and lengthy and is available on github \cite{multicpx-eqn}.
Next we compare the multicomplex and the bicomplex, showing that they are equivalent.  
This follows by the barycentric deformation theorem \ref{bdthm}, and by introducing an order relation on Fox polynomials.

The upshot is:
\begin{mx-theorem} The first three pages of the Sinha spectral sequence for the space of long knots
 (modulo immersion) in $\R^3$ with coefficients in $\F_2$ are isomorphic to the 
 $3$-truncated spectral sequence associated to $\MFN_3$.
\end{mx-theorem}

Vassiliev conjectured that his spectral sequence in dimension 3 collapses at the first term, that is the second term 
$E^2$ of the Sinha spectral sequence. Now the differential $d^2$ is zero by dimensional reasons, so the first possible non trivial differential is $d^3$.
We cannot find differentials hitting the 0-th dimensional line, because there is no torsion known in the $E^2$ term
on that line. 
The first known 2-torsion class is 1-dimensional and lives in  $E^3_{7,8}$. Therefore the first possible source
of a non-trivial differential (a $Tor$-class not coming from integral homology) is 2-dimensional and lives in $E^3_{6,8}$. 

We must look at the differential $d^3:E^3_{6,8} \to E^3_{9,10}$ hitting 1-dimensional classes. 

The $E^2=E^3$ terms over $\F_2$ have been computed in a range by Turchin: we have
$E^3_{6,8}=(\F_2)^2, \, E^3_{9,10}= (\F_2)^{11}$. 
We start from a generator $[vas] \in \underline{E}^3_{6,8}$ (a class in $E^1_{6,8}=H_8(\Conf_6(\R^3))$),
that we need to represent by a cycle $vas  \in \MFN_3(6)_8$, and then play a spectral sequence tic tac toe for multicomplexes, to get 
$d^3[vas] \in H_{10}(\Conf_9(\R^3))$.  

Namely we must find $x$ and $y$ such that  $D_0(x)=D_1(vas)$ and $D_0(y)=D_2(vas)+D_1(x)$.
Then $d^3[vas]$ is represented by $D_1(y)+D_2(x)+D_3(vas) \in \MFN_3(9)_{10}$.

 We implement the calculation in MATLAB, as explained in section \ref{usersguide}. The heaviest part of the computation is to find $y$, i.,e. solving a linear system, and the performance is optimized by exploiting the action of the symmetric group on Fox-Neuwirth trees (it takes 4 hours on a PC with 32Gb Ram).
The algorithm is interesting in itself, and consists of Gauss reduction in group rings of symmetric groups of increasingly smaller order. 

Surprisingly it turns out that $d^3[vas]$ is not trivial. 

\begin{nc-theorem}The Sinha homology spectral sequence for the space of long knots in $\R^3$ (modulo immersions) 
 with $\F_2$ coefficients does not collapse at the third page, since there is a non-trivial differential $d^3:E_{6,8}^3 \to E_{9,10}^3$. 
\end{nc-theorem}

We recall that there is a version of the Sinha spectral sequence  $\underline{E}$
for the actual space of knots in $\R^3$ (not modulo immersions), 
and a natural map of spectral sequences $E \to \underline{E}$, that is a surjection on the second page. 
The second page of $\underline{E}$ is isomorphic to the first page of the Vassiliev spectral sequence. 
The non collapse holds also for $\underline{E}$, and so conjecturally for the Vassiliev spectral sequence.
 \begin{ncc-theorem}
The differential $d^3:\underline{E}^3_{6,8} \cong \F_2 \to \underline{E}^3_{9,10} \cong (\F_2)^4$ is non trivial.
\end{ncc-theorem} 


 As a corollary one can deduce that the operad  $\mathbb{E}_3$ is not 
 multiplicatively 
formal over $\mathbb{F}_2$. Since formality plays a central role in the description of finite-type invariants over rational coefficients, this result highlights the need for new techniques in the case of  positive characteristics.

It is possible to use the same approach to study the differential 
$d^3:E^3_{7,10} \to E^3_{10,12}$, that is potentially non-trivial, since there is a 2-torsion class in $E^2_{8,10}$. 
However the computation will be approximately 100 times heavier. 
Extending our techniques to other differentials of $E^3$, or worse to knots in higher dimensions (4 and more) seems prohibitive for dimensional reasons.

Still we would like to understand better the properties of our multicomplex. Can we construct higher differentials $D_i$? Is there 
a geometric reason we got this formulae? What happens in higher dimensions, or in odd characteristics ?
So far only the 2-dimensional case can be completely worked out, as explained in remark \ref{casem=2}.
Also, it would be nice to have a geometric explanation for the non-trivial differential.

\
Plan of the paper: 

In section \ref{bano} we give the background on Fox Neuwirth trees,  cosimplicial Kontsevich spaces, spectral sequences of knots, and the theory of multicomplexes .
In section \ref{bfns} we define the cosimplicial poset $\FNP_m(\bullet)$ of Fox Neuwirth trees, and prove the barycentric deformation theorem.
In section \ref{fn-mcpx-3} we define the (truncated) multicomplex structure on the Fox Neuwirth complexes in dimension 3, and prove that it is equivalent to 
the bicomplex coming from $\FNP_m(\bullet)$. 
In the last section \ref{usersguide} we describe the algorithms and the software used to find the non trivial differential $d^3$.

\

P. S. thanks Victor Turchin for inviting him to KSU where this flow of ideas started, and for several discussions on the topology of the knot spaces. 
Part of this work appears in the PhD thesis of A.M., supervised by P. S.

\nopagebreak
\section{Background and Notation} \label{bano}
In the following, we go through the essential concepts and notations necessary for the article. 

\subsection{Configuration spaces and Fox-Neuwirth cells} \label{configurations}
Given $n,m\ge 1$, the (euclidean) configuration space of $n$ points in dimension $m$ is defined as
$$ \Conf_n(\mathbb{R}^m) : \{ (x_1, \ldots, x_n) \in (\mathbb{R}^m)^n: x_i \neq x_j \ \ \textrm{ for } i \neq j \}\ . $$
We briefly introduce a combinatorial description of such space - the Fox-Neuwirth stratification - that will be fundamental for our investigation; see \cite{Giusti} for a thorough account. Every configuration determines a permutation: if we denote by $<$ the lexicographical order on $\mathbb{R}^m$, for every point $\vect{x} \in \Conf_n(\mathbb{R}^m)$ there exists a unique permutation $\sigma \in \Sigma_n$ such that $x_{\sigma(1)} < \ldots < x_{\sigma(n)}$. However, this is not enough to decompose $\Conf_n(\mathbb{R}^m)$ nicely; the set of points yielding a given permutation has substrata of different dimension. This motivates the following definition.
\begin{definition}A depth-ordering of height $m$ on a set $S$ is the data of
\begin{itemize}
\item A linear order on $S$;
\item For each strict inequality $x < y$ in $S$, a depth index $d(x,y) \in \{0, \ldots, m-1\}$ such that
$$ d(x,z) = \min\{d(x,y), d(y,z) \} \ .$$
\end{itemize}
We write $x <_a y$ as a notation for $d(x,y) = a$.
\end{definition}

There is an evident depth-ordering on $\mathbb{R}^m$ with linear order given by the lexicographical order, and depth index equal to the number of shared coordinates. On the other hand, in case of the finite set $S=\{1, \ldots, n\}$, a depth-ordering $\Gamma$ is the same as:
\begin{itemize}
\item A permutation $\sigma \in \Sigma_n$, that determines the linear order;
\item Depth indices $a_1, \ldots, a_{n-1} \in \{0, \ldots, m-1\}$ such that $\sigma(1) <_{a_1} \sigma(2) <_{a_2} \ldots <_{a_{n-1}} \sigma(n) $. The other depth-indices are determined by the min-rule.
\end{itemize}
We denote by $\FNP_m(n)$ the set of depth-orderings of height $m$ on $\{1, \ldots, n\}$. There is a pictorial way to represent such depth-orderings, that will be important in later sections. Given $\Gamma= (\sigma, a_{\bullet}) \in \FNP_m(n)$ we can draw a planar tree of height $m$ and $n$ leaves with the following properties:
\begin{itemize}
\item Leaves are labeled according to $\sigma(1), \ldots, \sigma(n)$;
\item Downward paths starting at $\sigma(i)$ and $\sigma(i+1)$ merge at height $a_i$.
\end{itemize}
\vspace{0.5cm}
\begin{figure}[h]
	\centering
	\includegraphics{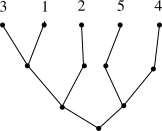}
	\caption{The tree associated to the Fox-Neuwirth cell $3 <_2 1 <_1 2 <_0 5 <_1 4 \in \FNP_3(5)$}
	\label{fox cell}
\end{figure}

Driven by the intuition above, we define the set of configurations that yields a given depth-ordering:
\begin{definition} For a depth-ordering $\Gamma = (\sigma, a_{\bullet}) \in \FNP_m(n)$, define
$$ \Conf(\Gamma) := \{ \vect{x} \in \Conf_n(\mathbb{R}^m) : x_{\sigma(1)} <_{a_1} \ldots <_{a_{n-1}} x_{\sigma(n)} \} \ .$$
\end{definition}
This gives the \textit{Fox-Neuwirth} stratification, first introduced it \cite{FN} for $m=2$, and later generalized to arbitrary dimensions by Nakamura \cite{Nakamura}. It is easy to see \cite{Giusti} that the following holds:
\begin{theorem} \label{FN-theorem}For any $\Gamma \in \FNP_m(n)$, the subspace $\Conf(\Gamma)$ is homeomorphic to a Euclidean ball of dimension $mn - \sum a_i$, and is a cell of an equivariant
CW decomposition of the one-point compactification $\Conf_n(\mathbb{R}^m)^+$.
\end{theorem}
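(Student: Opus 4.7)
The plan is to produce an explicit parametrization of each cell, observe that the cells partition $\Conf_n(\R^m)$, and then extend this to a characteristic map on the one-point compactification; equivariance will fall out for free.

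First I would reduce to the case where the underlying permutation $\sigma$ is the identity by the $\Sigma_n$-action on labels (the general case follows by relabeling). For $\Gamma$ encoded by depth sequence $a_1, \ldots, a_{n-1}$, I would construct a homeomorphism $\phi_{\Gamma} \colon \R^{mn - \sum a_i} \to \Conf(\Gamma)$ by parametrizing configurations sequentially. The point $x_1$ contributes $m$ free parameters. Given $x_{i-1}$, the point $x_i$ is constrained so that its first $a_{i-1}$ coordinates equal those of $x_{i-1}$, its $(a_{i-1}+1)$-th coordinate is strictly larger than that of $x_{i-1}$ (parametrized by a positive real, equivalently by $\R$ via the logarithm), and its remaining $m - a_{i-1} - 1$ coordinates are free. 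This contributes $m - a_{i-1}$ new parameters, summing to $m + \sum_{i=1}^{n-1}(m - a_i) = mn - \sum a_i$. A short check using the min-rule shows that all non-adjacent depth relations $d(x_j, x_k) = \min_{j \le \ell < k} a_\ell$ are automatically satisfied by this parametrization, so the image is exactly $\Conf(\Gamma)$ and the cell is a ball of the claimed dimension.

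Next, every $\vect{x} \in \Conf_n(\R^m)$ determines a unique $\Gamma \in \FNP_m(n)$: take $\sigma$ to be the lexicographic sort and $a_i$ to be the number of shared initial coordinates between $x_{\sigma(i)}$ and $x_{\sigma(i+1)}$. The min-rule is immediate from properties of the lexicographic order on $\R^m$. Hence the cells $\Conf(\Gamma)$ form a set-theoretic partition of $\Conf_n(\R^m)$.

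To promote this partition into a CW decomposition of $\Conf_n(\R^m)^+$, I would extend $\phi_\Gamma$ to a characteristic map from the closed disk $D^{mn - \sum a_i}$, sending the boundary sphere into the union of lower-dimensional cells together with $\infty$. Geometrically, the boundary of the open parameter space corresponds to two kinds of degeneration: either a positive coordinate gap in the sequential parametrization shrinks to $0$ (raising some $a_i$ and landing in a lower-dimensional cell $\Conf(\Gamma')$), or one of the free coordinates diverges, taking the configuration to $\infty \in \Conf_n(\R^m)^+$. The main obstacle is verifying this extension carefully: one must check that $\phi_\Gamma$ extends continuously to the one-point compactification of its parameter space, that the image of every boundary point genuinely lies in $\bigcup_{\dim \Gamma' < \dim \Gamma} \Conf(\Gamma') \cup \{\infty\}$, and that the resulting decomposition is regular (the characteristic map is injective on the open disk with controlled closure). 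This bookkeeping is precisely the content of Nakamura's and Giusti--Sinha's analysis of the Fox--Neuwirth stratification. Equivariance is then immediate from the construction: $\tau \in \Sigma_n$ carries $\Conf(\Gamma)$ homeomorphically to $\Conf(\tau \cdot \Gamma)$, where $\tau \cdot \Gamma$ has permutation $\tau \circ \sigma$ and unchanged depth indices.
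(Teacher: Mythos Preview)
Your proposal is correct and follows the standard argument. Note, however, that the paper does not actually give its own proof of this statement: it simply cites it as a known result (``It is easy to see \cite{Giusti} that the following holds''), attributing the decomposition to Nakamura and its modern treatment to Giusti--Sinha. Your outline---sequential parametrization giving the dimension count, the partition via the unique induced depth-ordering, and the extension of the characteristic map to the boundary sphere with degenerations going either to lower strata or to $\infty$---is precisely the argument one finds in those references, and you correctly flag that the delicate part (continuity of the extension and regularity of the closure) is where the cited works do the real work.
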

The exit poset of such stratification \footnote{This is the usual terminology in the context of stratifications of what would be called \textit{face poset} in the context of CW-complexes: $\Lambda' \le \Lambda$ if $\Lambda' \subset \widebar{\Lambda}$.} can be determined explicitly (\cite{blagojevic2013convex}, Lemma 3.3): $\Conf(\Gamma') \subset \overline{\Conf(\Gamma)}$ when for all $\alpha, \beta \in \{1, \ldots,n\}$
$$ \alpha <_r \beta \text{ in } \Gamma \ \  \Rightarrow \ \ \textbf{either  } \alpha <_s \beta \text{ with } s \leq r \textbf{  or  } \alpha >_s \beta \text{ with } s < r \text{ in } \Gamma'\ . $$
From now on, we consider $\FNP_m(n)$ to be a poset with the \textbf{reverse} exit poset structure.\footnote{That is, minimal elements are open cells of $\Conf_n(\mathbb{R}^m)$, corresponding to corollas.} At last, we outline a method to describe the (co)homology of $\Conf_n(\mathbb{R}^m)$ via Fox-Neuwirth trees. Building on the work of De Concini and Salvetti \cite{DeConcini}, Blagojevic and Ziegler showed in \cite{blagojevic2013convex} the existence of a subspace $\NBZ_m(n) \subset \Conf_n(\mathbb{R}^m)$ with the following features:
\begin{itemize}
    \item The inclusion $\NBZ_m(n) \to \Conf_n(\mathbb{R}^m)$ is a homotopy equivalence;
    \item $\NBZ_m(n)$ has a regular CW-complex structure, with a cell  $c(\Gamma)$ for each $\Gamma \in \FNP_m(n)$;
    \item The poset of cells has the opposite order with respect to the inclusion of strata: $c(\Gamma) \subset \overline{c(\Gamma')}$ iff $\Conf(\Gamma') \subset \overline{\Conf(\Gamma)}$;
    \item If $\Gamma = (\sigma, a_{\bullet})$, then the dimension of $c(\Gamma)$ is $\sum a_i$.
\end{itemize}
As a consequence, one can compute the (co)homology of $\Conf_n(\mathbb{R}^m)$ via the (dual of the) cellular chain complex of $\NBZ_m(n)$. We denote such complex by $\MFN_m(n)$. The work \cite{Giusti} is the first to give a complete description of signs in the differential, though in the cochain complex framework. We can conveniently reformulate it as follows. If $\Gamma < \Gamma'$ and $\dim c(\Gamma) = \dim c(\Gamma')-1$, the sign $\epsilon(\Gamma, \Gamma') = (-1)^{\kappa(\Gamma, \Gamma')}$ with which $\Gamma$ appears in $\partial \Gamma'$ is given by 
$$ \kappa(\Gamma, \Gamma') =p + m(n-1) + \sum_{k=1}^{p-1} \min \{ a_k, a_p+1\} + (a_p+1) + \sum_{k=p+1}^{n-1} \min \{ a_k, a_p\}\ , $$
where $\Gamma = (\sigma, a_{\bullet}), \Gamma' = (\sigma', a'_{\bullet})$ and $p$ is the minimum index such that $a_p < a'_p$. For the sake of completeness, let us remark that $C_*( \Conf_n(\mathbb{R}^m))$ has another well-established combinatorial model, provided by the components of the McClure-Smith \cite{smith} surjection operad $\mathcal{S}_m(n)$. To demonstrate the difference in terms of size between the two models, one can see for example\footnote{See \cite{cacti}, Theorem 4.13, for the generating function of the number of cells in $\mathcal{S}_2$. We used Wolfram Alpha to compute the Taylor Series up to order 10.} that the first values of 
$$q_k := \frac{ \rk \mathcal{S}_2(k)_{k-1}}{ \rk \MFN_2(k)_{k-1} }$$
are given by\footnote{Note that $q_k$ is always an integer, as generators of $\MFN_2(k)_{k-1}$ have the form $\sigma(1) <_1 \ldots <_1 \sigma(k)$ for $\sigma \in \Sigma_k$, and $\mathcal{S}_2(k)_{k-1}$ has a free action of $\Sigma_k$.}
\begin{center}
\begin{tabular}{|c||c|c|c|c|c|c|c|}
\hline
$k$ & 4 &5 & 6 & 7 & 8 & 9 & 10  \\ \hline
$q_k$ & 5 & 14&42&132&429&1430&4862 \\ \hline
\end{tabular}
\end{center}
A direct combinatorial comparison between the two frameworks in the case $m=2$ is treated in Tourtchine's article \cite{dyer}. Later on, we will also consider the barycentric subdivision $\BZ_m(n)$ of $\NBZ_m(n)$ because of its geometric relation with $\Conf_n(\mathbb{R}^m)$. As it turns out, $\BZ_m(n)$ is an explicit realization\footnote{Denoted \cite{blagojevic2013convex} as $\Ff (m,n)$.} of $\Nerve(\FNP_m(n))$ inside the configuration space: more precisely, Theorem 3.13 of \cite{blagojevic2013convex} shows that $\BZ_m(n)$ can be realized inside\footnote{Corresponding to $W_n^{\oplus d}$ in the cited work.} $\Conf_n(\mathbb{R}^m)$ as the boundary of a PL star-shaped closed subset. Explicitly, given $(\sigma, a_{\bullet})$ a Fox-Neuwirth cell in $\FNP_m(n)$, the vertices $v(\sigma, a_{\bullet})$  of $\BZ_m(n)$ are built in the following way: set $v_{\sigma(1)}(\sigma,a_{\bullet})=0$, and then define inductively
$$ v_{\sigma(p+1)} (\sigma, a_{\bullet} ) = v_{\sigma(p)} (\sigma, a_{\bullet}) + e_{a_p +1} \ .$$
Given a chain $(\sigma_0, a_{\bullet}^0) < \ldots < (\sigma_d, a_{\bullet}^d)$ in $\Nerve(\FNP_m(n))_d$, the corresponding face in $\Conf_n(\mathbb{R}^m)$ is the convex hull of the vertices $v(\sigma_0, a_0), \ldots , v(\sigma_d, a_d)$.  

In figure \ref{fig:zb-triangulation}, we depicted the case $m=3, n=2$ with the following convention: the number of bars between two labels is $2$ minus the depth, and $w(\Gamma) = v_2(\Gamma)$ represents the non-zero component. What we get is a PL 2-sphere, which is what we expect as a deformation retract of $\Conf_2(\mathbb{R}^3)$. In the picture, one can verify that there is a 1-1 correspondence between chains in the poset $\FNP_2(3)$ and simplices of the sphere.

\begin{figure}[h]
	\centering
	\includegraphics{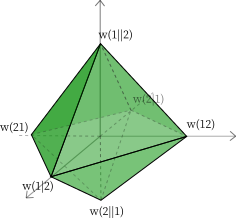}
        \caption{An illustration of $\BZ_3(2)$ as a PL-sphere}
	\label{fig:zb-triangulation}
\end{figure}

\subsection{Cosimplicial structure on the Kontsevich spaces} \label{kons-cosimp}
Recall that the \textit{little disk operad} $\mathbb{E}_m$ in degree $n$ is a collection of $n$ disks of dimension $m$ inside a disk of the same dimension. The configuration space $\Conf_n(\mathbb{R}^m)$ is homotopy equivalent to the $n$-th entry $\mathbb{E}_m(n)$ of the $m$-dimensional little disk operad: the equivalence $\mathbb{E}_m(n) \to \Conf_n(\mathbb{R}^m)$ is simply given by taking the centers of the disks. The operad structure on $\mathbb{E}_m$ is given by inserting a collection of disks into a disk; in the case of configuration spaces, since we only remember centers, it is not clear how to "insert" a configuration of points into a point. A naive idea would be to choose a sufficiently small radius $\epsilon$, substitute the $i$-th point with a disk, and insert the adequately scaled configuration of points into the disk. However, when doing this multiple times, the operadic constraints would hold only up to homotopy. In order to fix this issue, we should somehow take the limit of this procedure for $\epsilon \to 0$. A quantity that remains unchanged when scaling a configuration $\vect{x}$ in a disk of radius $\epsilon \to 0$ is
$$ (*) \ \ \ \phi_{ij}(\vect{x}) = \frac{x_i - x_j}{||x_i - x_j||} \in S^{m-1} \ ,$$
since it is homogeneous. This motivates the following
\begin{definition} The space $\Kons_m(n)$ is the closure in $(S^{m-1})^{\binom{n}{2}}$ of the image of
$$ \phi = (\phi_{ij})_{i \neq j} : \Conf_n(\mathbb{R}^m) \to (S^{m-1})^{\binom{n}{2}} \ ,$$
where $\phi_{ij}$ is defined as in equation $(*)$.
\end{definition}
In \cite{sinha2004manifold}, Sinha collected and reviewed various compactifications of configuration spaces. In particular, he showed that the map $\Conf_n(\mathbb{R}^m) \to \Kons_m(n)$ is a homotopy equivalence. In a successive paper \cite{sinha2004operads}, he also showed that the above heuristics on $\Kons_m$ yields an operadic structure (Theorem 4.5). A picture to keep in mind to think about Kontsevich space is the one below: a collection of nested clouds, each cloud containing a configuration.

\begin{figure}[H]
    \centering
    \includegraphics[scale=0.25]{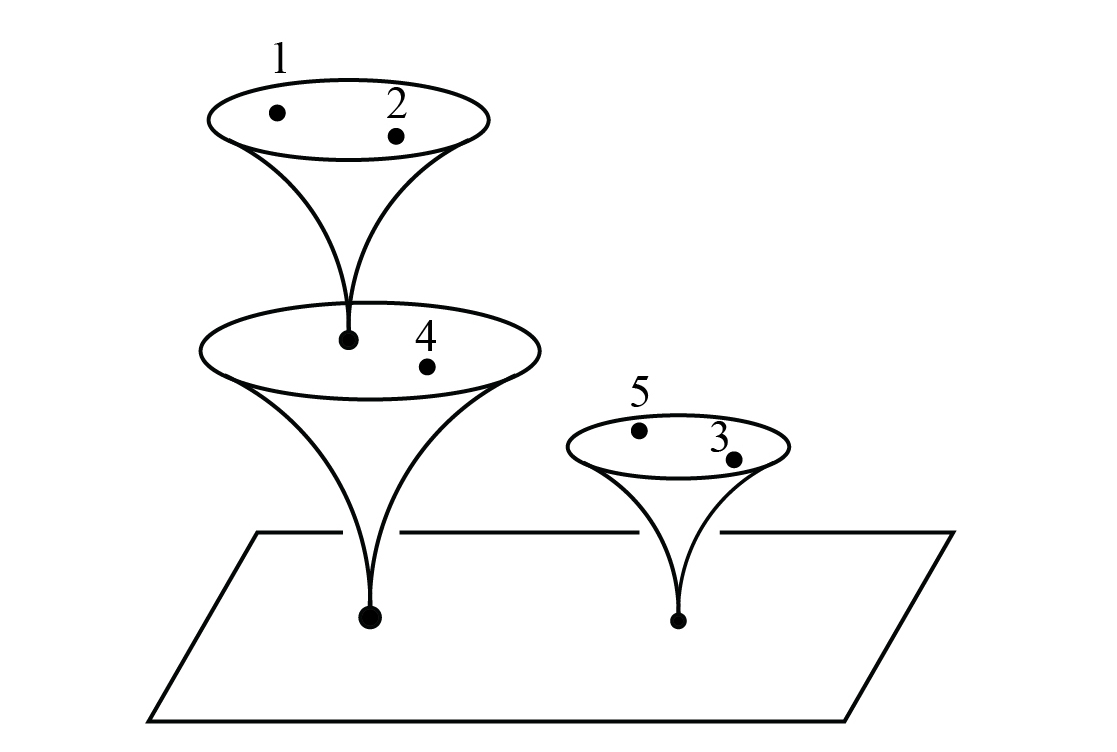}
    \caption{A point in $\Kons_2(5)$}
    \label{Kontsevich}
\end{figure}

There is another feature about this operad that makes it interesting. McClure and Smith \cite{mcclure2001solution} have shown that any \textit{multiplicative operad} $\mathcal{O}$ - that is, a non-symmetric operad equipped with a morphism $\Ass \to \mathcal{O}$ from the associative operad  - defines a cosimplicial structure. In practice, this amounts to a distinguished binary operation $\mu \in \mathcal{O}^2$  such that $\mu( \mu(-,-),-) = \mu(-, \mu(-,-))$ (a \textit{multiplication}) and a unit $e \in \mathcal{O}^0$ such that $\mu(e,-) = \mu(-,e) = \Id$.  The maps $d_i : \mathcal{O}^n \to \mathcal{O}^{n+1}$ for $i=0, \ldots, n+1$ and $s_j: \mathcal{O}^{n+1} \to \mathcal{O}^n$ for $j=0, \ldots, n$ are defined as:
\begin{align*}
    \boxed{1 \le i \le n} & \ \ \ \ \  d_i :& \hspace{-3cm} \mathcal{O}^n \overset{(\Id,\mu)}{\to} \mathcal{O}^n \times \mathcal{O}^2 \overset{\circ_i}{\to} \mathcal{O}^{n+1} \ , \\
    \boxed{i=0} & \ \ \ \ \  d_0 :& \hspace{-3cm} \mathcal{O}^n \overset{(\mu,\Id)}{\to} \mathcal{O}^2 \times \mathcal{O}^n \overset{\circ_2}{\to} \mathcal{O}^{n+1} \ ,\\
    \boxed{i=n+1} & \ \ \ \ \  d_{n+1} :&  \hspace{-3cm}\mathcal{O}^n \overset{(\mu,\Id)}{\to} \mathcal{O}^2 \times \mathcal{O}^n \overset{\circ_1}{\to} \mathcal{O}^{n+1} \ ,\\
    \boxed{0 \le j \le n} & \ \ \ \ \ s_j :& \hspace{-3cm}\mathcal{O}^{n+1} \overset{( \Id,e)}{\to} \mathcal{O}^{n+1} \times \mathcal{O}^0 \overset{\circ_{j+1}}{\to} \mathcal{O}^n \ ,
\end{align*}

The axioms for the multiplication ensure that it is a cosimplicial space. In our case, we can choose any\footnote{In \cite{sinha2004operads} the multiplication is chosen to be the southpole.} $\mu \in \Kons_m(2) \cong S^{m-1}$, while the unit is the only point.
The corresponding cosimplicial moves are:
\begin{itemize}
    \item $s_j$ forgets the $(j+1)$-th point ;
    \item $d_i$ for $i=1, \ldots,n$ doubles the $i$-th point, placing $(i+1)$ infinitesimally close to $i$ in direction $\mu$;
    
    \item $d_0$ inserts a new point with label $1$ at $\infty$ in direction $-\mu$;
    \item $d_{n+1}$ inserts a new point with label $n+1$ at $\infty$ in direction $\mu$
\end{itemize} 
followed by relabelings.


We will set $\mu = e_m$ in the remainder of the article. Since the sphere is homogeneous, the cosimplicial objects corresponding to different multiplications are equivalent. In contrast, when looking for a combinatorial analog of point duplication on Fox-Neuwirth trees, different directions yield different structures, as the depth-ordering on $\mathbb{R}^m$ is sensitive to the coordinates on which two points differ.

\subsection{Spectral Sequences for Knots} \label{sp-seq-intro}
The space of knots considered in this article is constructed in the following way. Consider the space of embeddings $ \Emb_*(\mathbb{I}, \mathbb{I}^m) $ of the interval $\mathbb{I}$ into the cube $\mathbb{I}^m$ such that endpoints and tangent vectors at those endpoints are fixed centers of opposite faces of the cube. The pointwise derivative yields a map
$$ \Emb_*(\mathbb{I}, \mathbb{I}^m) \to  \Omega S^{m-1} \ .$$
The \textit{space of knots modulo immersions} $\overline{\Emb}_m$ is the homotopy fiber of such map. Let us outline the connection with Kontsevich spaces proved in \cite{sinha2004operads}.

Given an abelian group $A$, consider the cosimplicial chain complex $C_*(\Kons_m, A)$ obtained by taking the singular chain complexes of the Kontsevich spaces. By taking the alternated sum $\ \sum (-1)^i C_*(d_i)$ of cofaces, we get a horizontal differential. Together with the vertical, inner differential of the chain complexes, this provides a second-quadrant bicomplex. We also obtain an equivalent\footnote{That is, with spectral sequence isomorphic from the second page on} bicomplex if we take \textit{normalized} chains, that is we restrict each vertical chain complex to the (intersection of the) kernels of co-degeneracies in each degree.

 For $m \ge 4$ the spectral sequence associated to this bicomplex converges to the homology of the homotopy totalization of $\Kons_m$. Furthermore, Sinha proves that such totalization is homotopy equivalent to the spae of knots modulo immersions $\overline{\Emb}_m$. In the interesting case $m =3$, it is not clear if the spectral sequence converges, and what the potential limiting object should be. Nevertheless, when taking rational coefficients, the limit contains \cite{tourtchine2005bialgebra} all the $\mathbb{Q}$-valued Vassiliev knot invariants. For $m=2$, the spectral sequence converges to zero in positive characteristics, but the limit over $\mathbb{Q}$ contains the Grothendieck-Teichmuller Lie Algebra \cite{willwacher}. Lastly, let us stress that the Sinha Spectral Sequence is meaningful for all $m \ge 2$ because of its deep connection to the pair $(\mathbb{E}_1, \mathbb{E}_m)$, especially regarding the formality of the latter.

There are several results stating whether such spectral sequence collapses at the second page or not. The answer turns out to be heavily dependent on the coefficient group $A$. Let us illustrate the current situation:
\begin{enumerate}
\item For $A= \mathbb{Q}, m\ge 4$, the collapse has been proved by Lambrechts, Volic, Tourtchine \cite{Lambrechts_2010};
\item The proof was later generalized to include $m=3$ by Tsopméné \cite{tsopmene};
\item The second named author \cite{salvatore} implicitly found an obstruction to collapse for $A=\mathbb{F}_2, m=2$ in his article about the non-formality of $\mathbb{E}_2$ over $\mathbb{F}_2$, ;
\item Moriya \cite{moriya2023differentials} found an obstruction to collapse for $ A=\mathbb{F}_3, m=2$, as well as reproved Salvatore's result over $\mathbb{F}_2$;
\item Salvatore later observed that non-collapse for $m=2$ in positive characteristics follows from a theorem of Goodwillie \cite{goodwillie} about the mod p homology of cosimplicial spaces\footnote{See \cite{goodwillie} for a thorough explanation.};
\item Turchin and Willwacher \cite{willwacher} found an obstruction to collapse for $A=\mathbb{Q}, \mathbb{Z}, m=2$.
\end{enumerate} 

Let us also mention the existence of a few special "framing" elements in the spectral sequence that are relevant to the investigation of collapse. They occur because the limiting object is the space of knots modulo immersions, and not the usual space of knots. 
This space $\overline{\Emb}_m$ is the homotopy fiber of the derivative map
$$ \Emb_*(\mathbb{I}, \mathbb{I}^m) \to  \Omega S^{m-1} \ .$$
Sinha proved in \cite{sinha2004operads} that this map is null-homotopic, which yields an equivalence $\overline{\Emb}_m \simeq \Emb_*(\mathbb{I}, \mathbb{I}^m) \times \Omega^2 S^{m-1}$. We then expect to find elements coming from $H_*(\Omega^2 S^{m-1})$ in the spectral sequence. Indeed, since such elements contribute to the homology of the final object, they survive to the $\infty$ page. If we are looking for an obstruction to the collapse of the spectral sequence, it is important to bear in mind this phenomenon. This will be further elucidated in the last section \ref{usersguide}
 addressing the case $m=3$.

Regarding actual computations, the second page of the spectral sequence has been extensively studied by Tourtchine \cite{tourtchine2005bialgebra}, who gave a combinatorial description of the chain complexes appearing in the first page and of their homology (which constitutes the second page). Remarkably enough, its description shows that the first and second pages of the spectral sequence only depend\footnote{Beside a bigrading shift, which depends on $m$} on the parity of the ambient dimension $m$; in case we work over $\mathbb{F}_2$, even this distinction disappears. 
 Since there are different bigrading conventions in the three approaches (Sinha, Vassiliev, Tourtchine), let us state the bigrading shifts explicitly:

\adjustbox{scale=1.25,center}{%
\begin{tikzcd}
	& \substack{\textrm{Sinha} \\ (p,q)} \\ \\
	\substack{\textrm{Tourtchine} \\ (i,j)} & & \substack{\textrm{Vassiliev} \\ (n,d)}
	\arrow["\substack{p = -j \\ q = i(m-1)}", from=3-1, to=1-2]
	\arrow["\substack{n=-i \\ d= mi-j}"', from=3-1, to=3-3]
	\arrow["\substack{p=d+mn \\ q=-(m-1)n}"', from=3-3, to=1-2]
\end{tikzcd}
}

This will be especially important when referring to Tourtchine tables. The Tourtchine-to-Vassiliev bigrading shift is stated\footnote{Unluckily with $(p,q)$ instead of $(n,d)$ and $d$ in place of $m$... Apparently $(p,q)$ are the most trendy indices.} in \cite{tourtchine2005bialgebra}, Appendix B. It may be helpful to keep in mind that the $p$ index in Sinha is \textit{minus} the number of points.

At last, note that the transformations going out from Tourtchine are not invertible on the integers, but only injective; the gradings which are not in the image of the transformation correspond to vanishing elements of the spectral sequence. Indeed, there are many zero elements in the Sinha Spectral Sequence\footnote{ We stress the ambient dimension $m$ of the spectral sequence, as in $E_{pq}^r(m)$.}:
\begin{enumerate}
\item If $q$ is not divisible by $(m-1)$ we have that $E_{pq}^1(m) \subseteq H_q(\Conf_{-p}(\mathbb{R}^m)) = 0$, since the latter is generated by "planetary systems" \cite{sinha2010homology}, and an orbit of a point around another is parametrized by $S^{m-1}$. This vanishing also shows that $E_{r}^{pq}(m) \cong E_{r+1}^{pq}(m) $ unless $r \equiv 1 \pmod{m-1}$ or $r=0$, since  otherwise the differentials always depart from or land in zero.
\item In Tourtchine's bigrading, if $(i,j)$ does not respect $i+1 \le j \le 2i$ (see right after proposition 3.3), then the first page of the spectral sequence in position $(i,j)$ vanishes. Translating to Sinha's bigrading, we have that non-zero terms sit in $(-p,q)$ such that (see figure \ref{vanish-seq}):
\begin{align*}
q & \le (p-1)(m-1) \ ,\\
q & \ge p \frac{(m-1)}{2}\ .
\end{align*}
\end{enumerate}

\begin{figure}[H]
    \centering
    \includegraphics[scale=0.38]{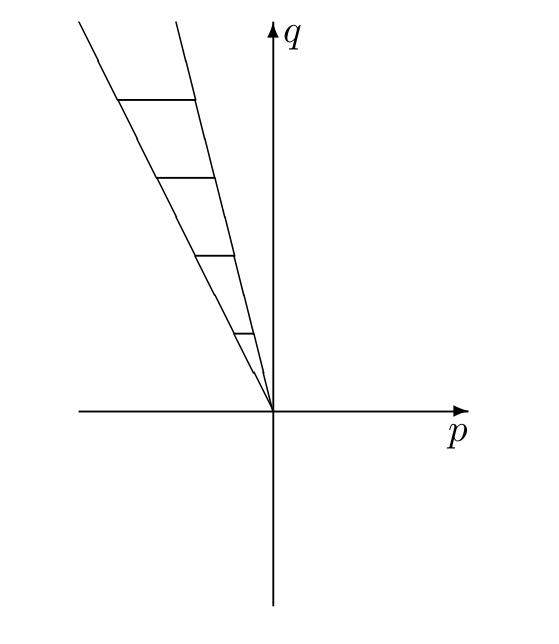}
    \caption{A sketch of the "lower line" and "upper line" for Sinha Spectral Sequence}
    \label{vanish-seq}
\end{figure}

This observation shows that the number of non-zero generators on an antidiagonal is finite for $m \ge 4$. Since the spectral sequence collapse at the second page over the rational numbers, summing non-vanishing terms along antidiagonals yields:
$$ H_r( E_m, \mathbb{Q}) \cong \bigoplus_{\substack{\frac{m+r-1}{m-2} < p \le \frac{2r}{m-3}\\ p \equiv -r \pmod{d-1} }} E_2^{-p,p+r}(m) \otimes \mathbb{Q} \ .$$
Note that the formula is non-trivial, as it implies $H_r(E_m, \mathbb{Q} ) = 0$ when $1\le r \le m-4$.

At last, let us mention the connection with the formality of little disk operads. A multiplicative operad $\mathcal{O}$ in chain complexes is said to be \textit{multiplicatively formal} if there exists a zig-zag of quasi-isomorphisms between multiplicative operads
$$ \mathcal{O} \stackrel{\simeq}{\leftarrow} \ldots  \stackrel{\simeq}{\rightarrow} H_{\bullet} \mathcal{O} $$
that preserves the multiplication. In the previous section, we explained how to construct a cosimplicial chain complex from a multiplicative operad in spaces. The associated spectral sequence, in case such operad is multiplicatively formal, will collapse at the second page. 

When considering $\mathcal{O} = C_*(\mathbb{E}_m,A)$, the multiplication is induced by the standard inclusion $\mathbb{E}_1 \to \mathbb{E}_m$. In some cases, this provides a bridge between the collapse of the Sinha spectral sequence and the relative formality of the pair $(C_*(\mathbb{E}_1, A) , C_*(\mathbb{E}_m, A))$. 

\subsection{The theory of multicomplexes} \label{multicpx-back}
A multicomplex\footnote{Also known as a twisted chain complex or a $D_{\infty}$-module.} is an algebraic structure generalizing the notion of a bicomplex. The structure involves a family of higher “differentials” indexed by the non-negative integers, meant to encode commutations holding only up to homotopy. The formal definition is the following (from \cite{livernet}): 
\begin{definition} \label{multicpx-def} Let $k$ be a commutative unital ring. A multicomplex over $k$ is a 
$(\mathbb{Z}, \mathbb{Z})$-graded $k$ module $C_{**}$ equipped with maps $D_i: C_{**} \to C_{**}$ for $i \ge 0$ with bidegree $|D_i| = (-i,i-1)$ such that
$$ \sum_{i+j=k} D_i D_j = 0 \ \ \ \textrm{for all } k \ge 0 \ .$$
A morphism of multicomplexes $f: (C_{**}, D_i) \to (C_{**}', D_i')$ is given by maps $f_i : C_{**} \to C_{**}'$ for $i \ge 0$ of bidegree $|f_i| = (-i,i)$ satisfying
$$ \sum_{i+j = k} f_i D_j = \sum_{i+j=k} D'_i f_j \ \ \textrm{for all } k \ge 0 \ .$$
\end{definition}
If we set $D_i = 0$ for $i \ge 2$, we recover the notion of a bicomplex. The latter can be alternatively seen as a "chain complex of chain complexes". There is an analog reformulation for multicomplexes, too. If we fix $p\in \mathbb{Z}$, the $\mathbb{Z}$-graded module $C_{p*}$ is a chain complex; indeed, $D_0$ has bidegree $(0,-1)$, and the multicomplex law for $k=0$ implies $D_0^2 = 0$. Thus, in the spirit of "homotopy bicomplexes", we can rewrite:
\begin{definition}Let $k$ be a commutative unital ring. A multicomplex is a collection of chain complexes $C_n \in \textbf{Ch}(k)$ equipped wih a collection of maps $D_i : C_n \to C_{n-i}[i-1] $ for $i \ge 1$ such that
$$ \sum_{i+j=k} D_i D_j = 0 \ \ \ \textrm{for all } k \ge 0 \ ,$$
where $D_0$ is the internal differential of the chain complexes $C_n$. A morphism of multicomplexes $f : (C_*, D_i) \to (C'_*, D'_i)$ is a collection of maps $f_i : C_n \to C'_{n-i}[i]$ such that 
$$ \sum_{i+j=k} D'_i f_j = \sum_{i+j = k} f_i D_j \ \ \textrm{for all } k \ge 0 \ .$$
\end{definition}

We denote the category of multicomplexes as $\textrm{m}\textbf{Ch}(k)$. Sometimes a different sign convention, giving an isomorphic category, is used \cite{cirici2017derived}. There are several notions of homotopy equivalence between multicomplexes; one that is natural from the point of view of homotopy bicomplexes is the following:
\begin{definition} Let $C_*, D_*$ be multicomplexes. A \textit{pointwise} homotopy equivalence is a morphism of multicomplexes $f_*: C_* \to D_*$ such that $f_0$ is a homotopy equivalence.
\end{definition}


\begin{remark}
A nice feature of multicomplexes over bicomplexes is that they are \textit{stable under homotopy deformations}. In other words, given a multicomplex $C_*$ and a collection of chain complexes $\{C'_n\}_{n \in \mathbb{N} }$ together with homotopy equivalences $f_0^{(n)}: C_n \to C'_n$, there exists an extension of $\{C'_n\}_{n \in \mathbb{N} }$ to a multicomplex $C'_*$ and an extension of $f_0$ to a pointwise homotopy equivalence $f: C_* \to C'_*$. See the beautiful master thesis \cite{garda}, chapter 2, for a thorough account of this Homotopy Transfer Theorem; the precise formula for the transferred differential is given in Theorem 2.2.1.  
\end{remark}

A multicomplex has an associated total complex, with a filtration, and thus an associated spectral sequence. 
Notice that a pointwise homotopy equivalence provides an isomorphism of spectral sequences from the first page on. This provides a bridge with the definition of $E_1$-quasi-isomorphism introduced in \cite{cirici2017derived}. 
For the sake of simplicity, from now on we restrict to multicomplexes $\textrm{m}\Chains$ in which $C_n$ is a non-negatively graded chain complex (which is bounded below according to \cite{garda}). 

\begin{definition} \label{multicpx-tot} The totalization $\Tot C$ of a multicomplex $C \in \textrm{m}\Chains$ is a chain complex defined by
$$ (\Tot C)_n := \bigoplus_{a+b = n} C_{ab} \ ,$$
with differential given by $ D:= \sum_{i \ge 0} D_i $. The total complex admits a filtration 
$$ (F_p \Tot C)_n := \bigoplus_{\substack{a+b = n \\ a \le p}} C_{ab} \ . $$
\end{definition}

The spectral sequence of such a filtered complex can be described explicitly. Given $x \in C_{pq}$, we say that $x \in Z_{pq}^r$  if there exist \textbf{witnesses} $z_{p-j} \in C_{p-i,q+i}$ for all $1 \le j \le r-1$ such that $D_0 x = 0$ and 
$$ D_k x = \sum_{i=0}^{k-1} D_i z_{p-k+i} \ \ \ \textrm{for all } 1 \le k \le r-1\ . $$
Analogously, we say that $x \in B_{pq}^r$ if there exist witnesses $c_{p+k} \in C_{p+k,q-k+1}$ for all $0 \le k \le r-1$ s.t. 
\begin{align*}
x & = \sum_{k=0}^{r-1} D_k c_{p+k} \ ,\\
0 & = \sum_{k = \ell}^{r-1} D_{k-\ell}c_{p+k} \ \ \textrm{for all } 1 \le \ell \le r-1 \ . 
\end{align*}
Terms of the spectral sequence are then computed by $E_{pq}^r := Z_{pq}^r/ B_{pq}^r$. The differential is given by:
$$ (*) \delta_{pq}^r([x]) = [ D_r x - D_{r-1} z_{p-1} - \ldots D_1 z_{p-r+1}] \ ,$$
for any choice of witnesses $(z_{p-1}, \ldots, z_{p-r+1})$. In particular, note that for $r \ge 2$ the differential $\delta^r$ is \textbf{not}\footnote{See also \cite{hurtubise}, Example 4, for an explicit case of $\delta^2 \neq D_2$.} induced by $D_r$, as it happens in the classical case for $r=0,1$. Although the map $D_r$ from the multicomplex structure could seem a lift of $\delta^r$, it ultimately makes the differential in the spectral sequence more complicated, adding "higher order" contributions. Indeed, in the bicomplex case, formula $(*)$ simplifies to $D_1 z_{p-r+1}$. Nevertheless, using multicomplexes can be convenient for computations in spectral sequences: we can pay the price of a complicated differential to obtain smaller modules in each bidegree. We explain this approach for the case at hand in Section \ref{bicomplex}. 
\section{Barycentric Fox-Neuwirth Structure} \label{bfns}

\subsection{Definition of cosimplicial moves} \label{fox-intuition}
We want to introduce a simpler model for the Kontsevich cosimplicial spaces at the level of chains. Despite Fox-Neuwirth strata do not induce a cell structure on the Kontsevich spaces (see \cite{voronov}), we want to understand heuristically what would happen to them under Kontsevich cosimplicial moves if the extension existed. Recall that the cosimplicial structure on $\Kons_m$ depends on the choice of a direction $\mu$ (see \ref{kons-cosimp}); in this section, we set $\mu = e_m$. For the sake of visualization, we consider an example in ambient dimension $m=3$. Consider a point $(x_1, x_2, x_3) \in \Conf( 1 <_2 3 <_0 2)$ as in the following picture:
\begin{figure}[H]
\centering
\includegraphics[width=5.5cm]{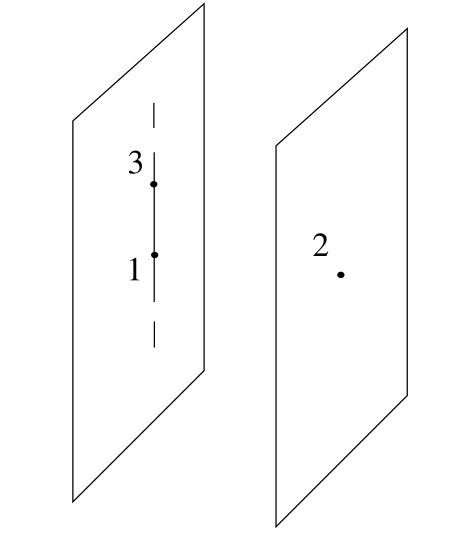}
\end{figure}
and its image $\phi(x_1, x_2, x_3) \in \Kons_3(3)$. Applying $d_3$ we get
\begin{figure}[H]
\centering
\includegraphics[width=5.5cm]{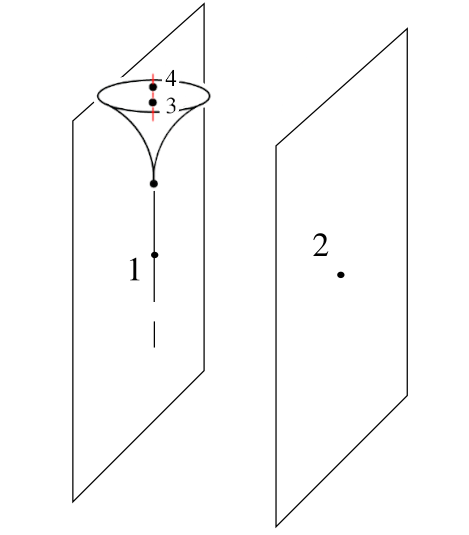}
\end{figure}
If we consider a small perturbation instead of an infinitesimal one, we will obtain a point in $\Conf(1 <_2 3 <_2 4 <_0 2)$. For this reason we consider the definition 
$$d_3( 1 <_2 3 <_0 2) = 1 <_2 3 <_2 4 <_0 2 \ .$$ 
Let us stress that for a general doubling, all labels greater than the doubled label have to be raised by one, for example:
$$d_1( 1<_2 3 <_0 2) = 1<_2 2 <_2 4 <_0 3\ .$$
On the other hand, if we apply the "extremal coface" $d_0$ to $\phi(x_1, x_2, x_3)$, we will get an infinitely distant point in the opposite direction of $\mu$.  
Susbstituting "infinitely distant" with "very far" we get a configuration of points. 
In the case $\mu=e_1$,
the configuration would be in $\Conf(1<_0 2 <_2 4 <_0 3)$. 
We can adapt this formula to $\mu =e_3$ by attaching the new point to the leftmost point, but with a vertical displacement : 
$$d_0(1 <_2 3 <_0 2) = 1 <_2 2 <_2 4 <_0 3 \ .$$ 
This choice turns out to respect the cosimplicial identity $d_0 d_0 = d_1 d_0$, so we take it as a definition. An analogous heuristics in the opposite direction suggests 
$$d_4(1 <_2 3 <_0 2) = 1 <_2 3 <_0 2 <_2 4 \ . $$
At last, note that the definition of $s_j$ readily generalizes to Fox-Neuwirth trees: it consists in the elimination of the $(j+1)$-th point. 
The conjectured cosimplicial structure can be illustrated in the following way:
\begin{figure}[h]
	\centering
	\includegraphics{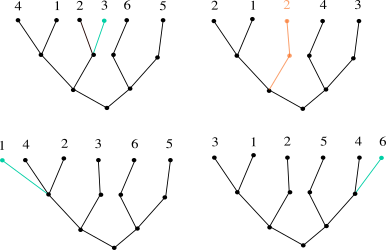}
	\caption{Coface $d_2$, codegeneracy $s_1$, external cofaces of the cell $3 <_2 1 <_1 2 <_0 5 <_1 4$}
	\label{fox action}
\end{figure}

\subsection{On Fox-Neuwirth posets}
\label{fnposet}
We want to define a cosimplicial structure on the Fox-Neuwirth posets that turns $\FNP_m(n)$ into a functor
$$\FNP_m( \bullet) : \Delta \to \textrm{Poset} \ .$$ 
Let us define the image of standard generators\footnote{We denote by $d_i, s_j$ both the generators of $\Delta$ and the cosimplicial structure maps; the correct interpretation can be deduced from the context.} of $\Delta$:

\begin{definition} \label{fn-cosimplicial}
Given a depth-ordering $\Gamma =(\sigma, a) \in \FNP_m(n)$, we construct the depth-orderings $ d_i \Gamma, s_j \Gamma$  in the following way. Let $\alpha := \sigma^{-1}(i)$ and $\beta := \sigma^{-1}(j+1)$.
\begin{itemize}
\item \textit{Internal cofaces, $0 < i < n+1$}. 
$$ d_i(\sigma)(k) = \left\{ \begin{array}{ll}
        i,  & \textrm{if }  k =\alpha\\
       d_i (\sigma( s_{\alpha}(k))) , & \textrm{ otherwise} 
    \end{array} \right. \ \ \ \ \ \ d_i(a)_k= \left\{ \begin{array}{ll}
        m-1,  & \textrm{if }  k =\alpha\\
       a_{s_{\alpha}(k)} , & \textrm{ otherwise} 
    \end{array} \right. 
    $$
\item \textit{Left extremal coface, $i=0$}
$$ d_0(\sigma)(k) = \left\{ \begin{array}{ll}
        1,  & \textrm{if }  k =1\\
       \sigma(k-1)+1 , & \textrm{ otherwise} 
    \end{array} \right. \ \ \ \ \ \ d_0(a)_k= \left\{ \begin{array}{ll}
        m-1,  & \textrm{if }  k =1\\
       a_{k-1} , & \textrm{ otherwise} 
    \end{array} \right.
    $$
\item \textit{Right extremal coface, $i=n+1$} 
$$ d_{n+1}(\sigma)(k) = \left\{ \begin{array}{ll}
        n+1,  & \textrm{if }  k =n+1\\
       \sigma(k) , & \textrm{ otherwise} 
    \end{array} \right.  \ \ \ \ \ \ d_{n+1}(a)_k= \left\{ \begin{array}{ll}
        m-1,  & \textrm{if }  k = n\\
       a_k , & \textrm{ otherwise} 
    \end{array} \right.
    $$
\item \textit{Codegeneracy, $0 \le j \le n-1$}
$$ s_j(\sigma)(k) = s_{j+1}( \sigma ( d_{\beta}(k) ) )   \ \ \ \ \ \  s_j(a)_k= \left\{ \begin{array}{ll}
        \min \{ a_{\beta-1}, a_{\beta} \},  & \textrm{if }  k = \beta-1 \\
       a_{ d_{\beta}(k) }, & \textrm{ otherwise} 
    \end{array} \right.
    $$
\end{itemize}
\end{definition}
\begin{remark} Informally, the cosimplicial structure acts in the following way:
\begin{itemize}
\item The $i$-th internal coface substitutes the branch labeled $i$ with a small fork labeled $i, i+1$;
\item The left (resp. right) extremal coface inserts $1$ (resp. $n+1$) together with a small fork, at the left (resp. right) of the tree;
\item The $j$-th codegeneracy eliminates the branch labeled $(j+1)$. 
\end{itemize}
\end{remark}
We verify that this definition actually yields a cosimplicial structure. 
\begin{lemma} For any $\Gamma \in \FNP_m(n)$, we have:
$$0 \le i < j \le n: \ \ \  d_j d_i \Gamma = d_i d_{j-1} \Gamma \ ,$$
$$0 \le j < i \le n: \ \ \  s_j s_i \Gamma = s_{i-1} s_j \Gamma \ ,$$
$$0 \le i < j \le n: \ \ \  s_j d_i \Gamma = \left\{ \begin{array}{ll}
       d_i s_{j-1},  & \textrm{if }  i <j \\
       \textrm{Id} & \textrm{if }  i \in {j,j+1} \\
       d_{i-1} s_j,  & \textrm{if }  i >j+1 \\ 
    \end{array} \right.$$
\end{lemma}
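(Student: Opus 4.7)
The plan is to verify each of the three families of identities by direct combinatorial case analysis, exploiting the fact that a depth-ordering $\Gamma=(\sigma,a)\in\FNP_m(n)$ splits into a permutation $\sigma\in\Sigma_n$ and a depth vector $a\in\{0,\dots,m-1\}^{n-1}$, and that the cosimplicial operators act on these two components largely independently. Reading off Definition~\ref{fn-cosimplicial}, the action on $\sigma$ is the classical one on $\Sigma_\bullet$ built from the simplicial face/degeneracy maps $d_\alpha,s_\beta$ of $\Delta$ combined with an insertion or deletion of the relevant label, while the action on $a$ is driven by two rules: a newly inserted branch receives depth $m-1$, and a deleted branch triggers a $\min$ of two adjacent depths. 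I would therefore verify each identity first at the level of $\sigma$, where it reduces to standard simplicial identities on $\Delta$, and then at the level of $a$.

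For $d_j d_i\,\Gamma=d_i d_{j-1}\,\Gamma$ with $0\le i<j$, I would split by whether either operator is extremal or internal. In every sub-case both compositions insert the same two new leaves with depth $m-1$, and the relabelling shift of one side is exactly compensated by the index shift $j\mapsto j-1$ on the other; this is the familiar cosimplicial identity on $\Delta$ transported through $\sigma^{-1}$. The identity $s_j s_i\,\Gamma = s_{i-1} s_j\,\Gamma$ for $j<i$ is dual: both compositions delete the branches labelled $i+1$ and $j+1$, and the resulting depth vector is obtained by two applications of the min-rule, which can be reordered by associativity of $\min$.

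The main obstacle is the mixed identity for $s_j d_i$, and in particular the collapsing sub-case $i\in\{j,j+1\}$ where the composition must equal $\Id$. The key observation is that $d_i$ inserts a small fork whose new internal edge carries depth $m-1$; when $s_j$ then removes one of the two new leaves and invokes the min-rule, one obtains $\min(m-1,a_\alpha)=a_\alpha$ (or the symmetric expression for $i=j+1$), so the depth vector is restored exactly, and the permutation part collapses by the classical simplicial cancellation $s_i d_i=s_i d_{i+1}=\Id$. The remaining two sub-cases $i<j$ and $i>j+1$ require only routine tracking of the positions $\alpha=\sigma^{-1}(i)$ and $\beta=\sigma^{-1}(j+1)$ through the two operations, again amounting to the standard simplicial identities on $\Delta$. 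All in all the proof is a finite but tedious case check; the only conceptually delicate point, beyond the classical simplicial identities on permutations, is the interaction between the min-rule and the maximal depth $m-1$ that makes the annihilation $s_j d_j=\Id=s_j d_{j+1}$ work.
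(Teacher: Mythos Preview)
Your proposal is correct and follows essentially the same approach as the paper: both split the verification into the permutation component and the depth component, observe that the permutation part reduces to the classical cosimplicial identities (the paper invokes the McClure-Smith construction on $\Ass$, you invoke the standard $\Delta$ identities), and then handle the depth vector via the interaction of the inserted value $m-1$ with the $\min$-rule. The paper packages the depth side slightly more cleanly by noting that $d^{\mathcal{D}}, s^{\mathcal{D}}$ themselves form a cosimplicial structure on $\{0,\dots,m-1\}^{n-1}$ and then tracking how conjugation by $\sigma^{-1}$ reindexes these, but the content of the check is identical to your case analysis.
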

\begin{proof} The lemma can be proved directly using explicit formulas, and it is left to the reader. In what follows, we sketch a more conceptual explanation that might clarify why the result holds.  \newline

The cosimplicial structure is a sort of "semidirect product" of cosimplicial structures on the two components (permutation and depth): 
\begin{itemize}
\item The action on permutations $(d^{\Sigma}, s^{\Sigma})$ does not depend on depths, and can be described as the McClure-Smith construction applied to the $\textbf{Ass}$ operad with multiplication $\mu = 12$ (see Section \ref{kons-cosimp});
\item The action on depths  $(d^{\mathcal{D}}, s^{\mathcal{D}})$ adds or remove a fixed value ($m-1$) in the tuple of depths, in a position depending on the permutation. When removing a value, the depth has to be adjusted so that the resulting tuple still respects the min-rule of a depth order.
\end{itemize}
Informally, we can write:
$$ d_i(\sigma, a) = (d_i^{\Sigma}(\sigma), d^{\mathcal{D}}_{\sigma^{-1}(i)}(a) ), \ \ \ s_j(\sigma, a) = (s_j^{\Sigma}(\sigma), s^{\mathcal{D}}_{\sigma^{-1}(j+1)}(a)) $$
The commutation on the first component will follow automatically from the commutation of the McClure-Smith construction. One can check that $d^{\mathcal{D}}, s^{\mathcal{D}}$ actually define a cosimplicial structure on $D(m)_n := \{0, \ldots, m-1\}^{n-1}$. On the second component, we will have to check that\footnote{Cosimplicial maps in the subscripts have the $\Sigma$ dropped for notational convenience.}:
$$ d^{\mathcal{D}}_{ d_i(\sigma)^{-1}(j)} d^{\mathcal{D}}_{\sigma^{-1}(i)} = d^{\mathcal{D}}_{d_j(\sigma)^{-1}(i)} d^{\mathcal{D}}_{\sigma^{-1}(j-1)} $$
Let us set $\alpha := \sigma^{-1}(i)$ and $\beta = \sigma^{-1}(j-1)$. Suppose that $i$ appears before (or is equal to ) $j-1$ in the depth-order defined by $\Gamma$, that is $\alpha \le \beta$. It is easy to see that
$$ d_i(\sigma)^{-1}(j) = \beta+1, d_j(\sigma)^{-1}(i) = \alpha $$
At this point, the verification reduces to $d^{\mathcal{D}}_{\beta+1} d^{\mathcal{D}}_{\alpha} = d^{\mathcal{D}}_{\alpha} d^{\mathcal{D}}_{\beta} $ which is a cosimplicial identity. On the other hand, if $\alpha > \beta$, we have that
$$ d_i(\sigma)^{-1}(j) = \beta, d_j(\sigma)^{-1}(i) = \alpha+1 $$
reducing the verification to $d^{\mathcal{D}}_{\beta} d^{\mathcal{D}}_{\alpha} = d^{\mathcal{D}}_{\alpha+1} d^{\mathcal{D}}_{\beta} $, which is again a cosimplicial identity.
\end{proof}

\subsubsection{On Fox-Neuwirth chains} \label{nerve-fn}
In this section, we show how to turn the combinatorial model we constructed into a semicosimplicial chain complex. Recall that a poset is in particular a category. The nerve functor
$$ \Nerve: \textrm{Cat} \to \sSet $$
provides a way to turn our posets into simplicial sets. By post-composition we get a functor
$$ \Nerve(\FNP_m( \bullet)) : \Delta_s \to \sSet \ .$$
Taking normalized simplicial chains $NC_*^{\Delta}: \sSet \to \Chains$ we get a functor
$$ NC_*^{\Delta}(\Nerve(\FNP_m( \bullet))) : \Delta_s \to \Chains \ . $$
Since the nerve of a face poset is usually associated to its barycentric subdivision, we denote by
$$ \FNC_m( \bullet) : \Delta_s \to \Chains $$
the resulting semicosimplicial chain complex. Explicitly, $\FNC_m(n)_d$ is generated by chains $\Gamma_0 < \ldots < \Gamma_d$ of Fox-Neuwirth trees $\Gamma_i \in \FNP_m(n)$. The differential is given by
$$\partial ( \Gamma_0 < \ldots < \Gamma_d) = \sum_{i=0}^d (-1)^i ( \Gamma_0 < \ldots < \hat{\Gamma}_i < \ldots < \Gamma_r )\ . $$
This is the chain-level manifestation of the fact that the face poset of the simplicial set $\Nerve(\FNP_m(n))$ is made up of chains $\Gamma_0 < \ldots < \Gamma_d$ ordered by refinement.
\subsubsection{On subdivided Blagojevic-Ziegler space} \label{geom-model}
Let us connect the algebraic and the geometric model. By the very definition (see section \ref{configurations}), we have
$$(*) \ \ \  \BZ_m(n) \cong | \Nerve(\FNP_m(n)) | \ ,$$
so that, at the level of chains, there is a quasi isomorphism
$$ \FNC_m(n)\simeq NC_*^{\Delta}(\Nerve(\FNP_m(n))) \to C_*(\BZ_m(n) ) \ .$$
Also, the geometric realization homeomorphism (*) gives us a way to endow $\BZ_m(n)$ with a geometric semicosimplicial structure. Indeed, by post-composing the functor $ \FNP_m(\bullet) : \Delta_s \to \textrm{Poset} $ with the functor $|\Nerve( -) | : \textrm{Poset} \to \textrm{Top}$ we get a semicosimplicial space $\Delta_s \to \textrm{Top}$. By transporting along the isomorphisms (*) we get a semicosimplicial structure $\BZ_m : \Delta_s \to \textrm{Top}$. Explicitly, the morphism $\phi \in \Delta$ sends the vertex $v(\sigma,i)$ to $v( \phi_*(\sigma,i))$, where $\phi_*$ is the semicosimplicial action on the Fox-Neuwirth cells. This map can be extended to faces by linear interpolation, and the abstract construction above ensures that the map is actually well-defined. A generalized proof that the linear combination of such configurations is still a configuration is explicitly given by the "walking-man formula" \cite{marinofox}.

\subsubsection{Associated bicomplex and multicomplex} \label{bicomplex}
Recall that the homotopy totalization of a (semi)cosimplicial chain complex can be obtained in the following way (see \cite{bunke2013differential}, 4.23). Firstly, construct a bicomplex by taking the alternated sum $\sum_i (-1)^i d_i$ in the cosimplicial direction; secondly, take the total complex associated to this bicomplex. The homology of such total complex is approximated by the spectral sequence of the original bicomplex.

If we apply this procedure to $\FNC_m(n)$, we get the following
\begin{definition} \label{bar-fn} For any $m \ge 2$, the \textit{Barycentric Fox-Neuwirth Bicomplex} with coefficients in an abelian group $A$ is defined as
$$ \BFNA_{dn} = \FNC_m(-n)_d \otimes A \ ,$$
with vertical, downward differential induced by
$$ \partial_v( \Gamma_0 < \ldots < \Gamma_d) = \sum_i (-1)^i \Gamma_0 < \ldots < \hat{\Gamma}_i < \ldots < \Gamma_d \ ,$$
and horizontal, leftward differential defined as
$$ \partial_h(\Gamma_0 < \ldots < \Gamma_d) = \sum_i (-1)^i d_i \Gamma_0 < \ldots < d_i \Gamma_d \ .$$
Here $\Gamma_0 < \ldots < \Gamma_d$ is a chain of Fox-Neuwirth trees with $n$ leaves and height $m$. The spectral sequence that starts by taking vertical homology is denoted by $E_{pq}^r(\FNC_m,A)$. The horizontal spectral sequence associated to the \textit{dual} bicomplex $\BFNA^{\vee}$ is denoted by $ E^{pq}_r(\FNC_m, A)$.
\end{definition}

In \cite{marinofox}, we prove the following
\begin{theorem} \label{barycentric-spseq} The spectral sequence associated to the bicomplex $\BFNA_{dn}$ is isomorphic to the Sinha Spectral Sequence with $A$-coefficients from first page on.
\end{theorem}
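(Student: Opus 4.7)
The plan is to exhibit a zig-zag of semicosimplicial chain complexes connecting $\BFNA$ to the Sinha cosimplicial chain complex $C_*(\Kons_m, A)$, each arrow of which is a levelwise quasi-isomorphism. By the standard comparison lemma for filtered complexes, a levelwise quasi-isomorphism of semicosimplicial chain complexes induces a columnwise quasi-isomorphism of the associated bicomplexes (after applying the alternating-sum horizontal differential), hence an isomorphism of the associated spectral sequences from $E^1$ onwards.

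First I would upgrade the abstract cosimplicial structure on the poset $\FNP_m(\bullet)$ from Section \ref{fnposet} into a geometric semicosimplicial structure on $\BZ_m(\bullet)$, following the blueprint of Section \ref{geom-model}: the vertex-level action of each coface on $\FNP_m(n)$ extends by linear interpolation on the simplices of $|\Nerve(\FNP_m(n))|\cong \BZ_m(n)$, and the cosimplicial identities are inherited from $\FNP_m$ via functoriality of the nerve. Next, the Blagojevic-Ziegler embedding $\BZ_m(n) \hookrightarrow \Conf_n(\R^m)$ composed with the Sinha map $\Conf_n(\R^m) \to \Kons_m(n)$ provides a levelwise homotopy equivalence $\BZ_m(n) \to \Kons_m(n)$.

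The central step is to verify that this composite is compatible, at least up to coherent homotopy, with the respective cosimplicial structures. For an internal coface $d_i$, the Fox-Neuwirth rule replacing label $i$ by a fork $i<_{m-1}i+1$ matches the Kontsevich coface doubling the $i$-th point in the direction $\mu=e_m$, since depth $m-1$ is precisely the condition of sharing the first $m-1$ coordinates and differing in the last. The extremal cofaces $d_0, d_{n+1}$ are more delicate because they insert points "at infinity" in the Kontsevich picture, whereas in the Fox-Neuwirth model they merely prepend or append a label sharing depth $m-1$ with its neighbor; compatibility up to coherent homotopy is enough, since the induced map on $E^1$ only depends on the homology-level action of the cofaces.

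Finally, passing to normalized singular chains turns the semicosimplicial equivalence $\BZ_m(\bullet) \to \Kons_m(\bullet)$ into a levelwise quasi-isomorphism of semicosimplicial chain complexes, and pre-composing with the identification $\FNC_m(n)\otimes A \to C_*(\BZ_m(n), A)$ from Section \ref{nerve-fn} (the simplicial chains on $\Nerve(\FNP_m(n))$ compute the singular homology of its realization $\BZ_m(n)$) completes the zig-zag. The hard part, and what the companion paper \cite{marinofox} is devoted to, is the construction and verification at the space level: producing continuous maps on $\BZ_m$ that strictly realize the abstract poset cofaces via the "walking-man" convex-combination formula, and matching these to the Kontsevich structure for the extremal cofaces, whose geometric picture involves limits that live in $\Kons_m$ but not in $\Conf_n(\R^m)$ itself.
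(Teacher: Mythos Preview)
The paper does not actually prove this theorem here; it merely states it and defers the proof to the companion paper \cite{marinofox}. Your outline is consistent with the strategy hinted at in Sections~\ref{geom-model} and~\ref{nerve-fn}: realize the poset cofaces geometrically on $\BZ_m(\bullet)$ via the walking-man formula, compare to $\Kons_m(\bullet)$, and descend to chains. You correctly identify that the substantive work---strictly realizing the poset cofaces as continuous maps on $\BZ_m$ landing in $\Conf_n(\R^m)$, and reconciling the extremal cofaces with the Kontsevich ``point at infinity'' picture---is deferred to \cite{marinofox}.

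One small caution: your phrase ``compatibility up to coherent homotopy is enough, since the induced map on $E^1$ only depends on the homology-level action of the cofaces'' is a bit loose. To obtain an isomorphism of spectral sequences from $E^1$ onward you need an honest map of filtered complexes (or a zig-zag of such), not merely agreement on $E^1$. What the companion paper presumably provides is a \emph{strict} semicosimplicial map at the space level (that is what the walking-man formula is for), followed by a further zig-zag through an intermediate model to reach $\Kons_m$; the extremal cofaces in Definition~\ref{fn-cosimplicial} were already adjusted (inserting the new point with depth $m-1$ rather than $0$) precisely so that the cosimplicial identities hold on the nose, not merely up to homotopy.
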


However, the number of chains of Fox-Neuwirth trees increases fastly with $d,n$. In the next section, we will introduce a method to deform the Barycentric Fox-Neuwirth Bicomplex into a smaller structure, that of a \textit{truncated multicomplex}.

\subsection{The Barycentric Deformation Theorem}
Before introducing the theorem, we need to define the notion of truncated multicomplexes. It can be thought as an analogue of truncated simplicial sets. This framework allows us to compute the first $r$ pages of the spectral sequence associated to a multicomplex, even if we only know the first differentials $D_0, \ldots, D_r$. A similar phenomenon happens with simplicial sets, in that we are able to compute the first $(d-1)$ degrees of the homology from the $d$-skeleton. 
\subsubsection{Truncated Multicomplexes}
The definitions are similar to the ones given in subsection \ref{multicpx-back}.

\begin{definition} Let $k$ be a commutative unital ring. A \textit{truncated} $r$-multicomplex over $k$ is a $(\mathbb{Z}, \mathbb{Z})$-graded $k$ module $C_{**}$ equipped with maps $D_i: C_{**} \to C_{**}$ for $0 \le i \le r$ with bidegree $|D_i| = (-i,i-1)$ such that
$$ \sum_{i+j=k} D_i D_j = 0 \ \ \ \textrm{for all } 0 \le k \le r .$$
A morphism of truncated $r$-multicomplexes $f: (C_{**}, D_i) \to (C_{**}', D_i')$ is given by maps $f_i : C_{**} \to C_{**}'$ for $0 \le i \le r$ of bidegree $|f_i| = (-i,i)$ satisfying
$$ \sum_{i+j = k} f_i D_j = \sum_{i+j=k} D'_i f_j \ \ \textrm{for all } 0 \le k \le r \ .$$
\end{definition}

\begin{remark} Note that the definition is slightly different from the one of a $r$-multicomplex as presented in \cite{multicpx-model}. Despite the definitions both involve a truncated differential structure, a truncated $r$-multicomplex is only subjected to equations with $k\le r$. For example, while a $1$-multicomplex is a bicomplex, a truncated $1$-multicomplex satisfies $D_0^2=0, D_1D_0=D_0D_1$ but not $D_1^2=0$.
\end{remark}

Referring to the construction \ref{multicpx-tot}, one can see that the construction of $k$-cycles and $k$-boundaries only uses differentials up to degree $p$. This motivates the following
\begin{definition} An $r$-truncated spectral sequence is the datum of abelian groups $E^k_{pq}$ for $k\le r$ and differentials $\delta^k : E^k_{pq}\to E^k_{p-k,q+k-1}$ satisfying $(\delta^k)^2 = 0$, together with isomorphisms  $\alpha_{pq}^k: H_{\bullet}(E^k_{pq}, \delta^k)\to E^{k+1}_{pq}$ for $k\le r-1$.

A morphism of $r$-truncated spectral sequences is given by morphisms $f_{pq}^k : E_{pq}^k \to (E^k_{pq})', k\le r$ that commutes with the isomorphisms $\alpha_{pq}^k$ for all $k\le r-1$. 
\end{definition}

Therefore, we have:
\begin{lemma} Given a truncated $r$-multicomplex, the quotients $E_{pq}^k := Z_{pq}^k / B_{pq}^k $ define a  $r$-truncated spectral sequence. Formulas for cycles, boundaries and differentials are given in \ref{multicpx-tot}. 
\end{lemma}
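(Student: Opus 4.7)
The plan is to verify the four structural properties required of an $r$-truncated spectral sequence: (a) $B^k_{pq}\subseteq Z^k_{pq}$, so $E^k_{pq}$ is well-defined for $k\le r$; (b) $\delta^k$ is independent of the choice of witnesses, descends to a map $E^k\to E^k$, and is well-defined for $k\le r$; (c) $(\delta^k)^2=0$ on $E^k$ for $k\le r$; and (d) there are natural isomorphisms $\alpha^k\colon H_\bullet(E^k,\delta^k)\xrightarrow{\sim}E^{k+1}$ for $k\le r-1$. A direct inspection of the formulas recalled in \ref{multicpx-tot} shows that the definition of $Z^k_{pq}$ and $B^k_{pq}$ invokes only $D_0,\dots,D_{k-1}$, while $\delta^k$ additionally uses $D_k$; hence for every $k\le r$ all the operators appearing are supplied by the truncated structure.

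For part (a), given $x=\sum_{i=0}^{k-1}D_i c_{p+i}$ in $B^k$ with the prescribed auxiliary relations on the $c_{p+i}$, I would construct cycle witnesses $z_{p-j}$ as explicit linear combinations of the $c_{p+i}$ and verify the equations $D_\ell x=\sum_{i=0}^{\ell-1}D_i z_{p-\ell+i}$ for $\ell\le k-1$ by applying the multicomplex relations $\sum_{i+j=\ell}D_iD_j=0$ to regroup the resulting telescoping sums. Only relations at levels $\ell\le k\le r$ appear. Part (b) is analogous: changing the $z_{p-j}$ to another admissible family alters $\delta^k[x]$ by a $B^k$-boundary in the target, and the image itself satisfies the cycle equations of $Z^k_{p-k,q+k-1}$ via the same reshuffling, again only using relations up to level $k$.

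The main computational effort lies in part (c). Expanding $\delta^k\delta^k[x]$ as a double sum of terms $D_iD_j z_{\bullet}$ and repeatedly applying $\sum_{i+j=\ell}D_iD_j=0$ for $\ell\le k$ yields an expression that can be recognised as a $B^k$-boundary; tracking the witness indices through this reshuffle is the main bookkeeping obstacle, but it mirrors the classical argument for full multicomplexes and, crucially, never invokes an identity at level $\ell>r$. For part (d), the isomorphism $\alpha^k$ sends the class of $x\in Z^{k+1}$ to its $\delta^k$-homology class: a $Z^{k+1}$-system of witnesses for $x$ restricts to a $Z^k$-system and simultaneously exhibits $\delta^k[x]$ as a $B^k$-boundary, so the assignment is well-defined; conversely, a $\delta^k$-cycle in $E^k$ can be promoted to an element of $Z^{k+1}$ by adjoining the $B^k$-witnesses of $\delta^k[x]$ to the existing $Z^k$-witnesses, which requires the relation at level $k+1\le r$. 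Altogether, the first $r+1$ pages, together with their differentials and the transitions between them, are entirely determined by the truncated $r$-multicomplex data.
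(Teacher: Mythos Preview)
The paper does not prove this lemma; it is asserted immediately after the observation that the formulas for $Z^k$, $B^k$, $\delta^k$ involve only $D_0,\dots,D_k$, so your plan to verify (a)--(d) is already more than the paper offers. There is, however, a genuine gap at the top level $k=r$, and it reflects an imprecision in the statement itself. In (b) you claim $\delta^k[x]\in Z^k$ using only relations at level $\le k$, and in (c) that the same bound gives $(\delta^k)^2=0$. Both fail for $k=r$: to verify the $Z^r$-witness equations for $\delta^r[x]$ you must simplify terms of the form $D_\ell D_r x$, and that needs the relation at level $r+\ell>r$, which is unavailable. Concretely, take the truncated $2$-multicomplex with $C_{4,0}=C_{2,1}=C_{1,1}=k$ and all other bidegrees zero, $D_0=0$, and $D_1\colon C_{2,1}\to C_{1,1}$, $D_2\colon C_{4,0}\to C_{2,1}$ both the identity. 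The three truncated relations hold (the only nontrivial check is $D_1^2=0$, forced by $C_{0,1}=0$), and the generator $x\in C_{4,0}$ lies in $Z^2_{4,0}$ with trivial witness; yet $\delta^2[x]=D_2x\notin Z^2_{2,1}$, since $D_1D_2x\ne 0$ while any witness would have to live in $C_{1,2}=0$.

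Thus $\delta^r$ need not even land in $Z^r$, let alone square to zero, so the lemma as literally stated (with the paper's definition requiring $\delta^r$ to be a differential on $E^r$) is not correct in full generality. In the paper's application this is harmless: Theorem~\ref{bdthm} produces a morphism of truncated multicomplexes to a genuine bicomplex, and well-definedness of $\delta^r$ is inherited from that comparison. But your standalone argument cannot be closed at the top page, and the bound ``level $\le k$'' in (b)--(c) is too optimistic even below the top.
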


As in the case of usual spectral sequences, we have the following
\begin{lemma} \label{tr-sp-seq}Given a morphism of truncated $r$-multicomplexes $f: C \to C'$, there exists an induced morphism of spectral sequences $E(f) : E(C) \to E(C')$. Moreover, if $E^k_{pq}(f)$ is an isomorphism for all $p,q$ and a given value of $k$, then $E^s_{pq}(f)$ is an isomorphism for all $r \ge s \ge k$. 
\end{lemma}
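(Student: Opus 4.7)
The plan is to construct $E(f)$ page by page via an explicit formula and then propagate isomorphisms by the standard homology argument. The only data to juggle are the witnesses; the multicomplex morphism identities do the rest.

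For $[x] \in E^k_{pq}(C)$ represented by $x \in Z^k_{pq}$ with witnesses $z_{p-j}\in C_{p-j,q+j}$ for $1 \le j \le k-1$ (and the convention $z_p := x$), define
\[
E^k(f)([x]) := [f_0(x)] \in E^k_{pq}(C'),
\]
with explicit witnesses $z'_{p-j} := \sum_{i=0}^{j} f_i(z_{p-j+i})$ for $1 \le j \le k-1$. The verification that $f_0(x)$ together with these witnesses satisfies the $k$-cycle identities $D'_s f_0(x) = \sum_{i=0}^{s-1} D'_i z'_{p-s+i}$ reduces, via the morphism equations $\sum_{i+j=s} f_i D_j = \sum_{i+j=s} D'_i f_j$ applied for $s \le k$, to a telescoping cancellation; all indices involved are $\le r$, so the truncation is respected. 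Independence from the choice of witnesses, and the fact that $B^k_{pq}$ maps into $(B^k_{pq})'$, are checked by the same method applied to the boundary data $c_{p+s}$, producing witnesses $c'_{p+s} := \sum_{i=0}^{s} f_i(c_{p+s-i})$.

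Compatibility with the differentials $\delta^k$ follows by substituting the explicit formula $(*)$ from subsection \ref{multicpx-back} into $E^k(f)$ and using the morphism equation at $s = k$. Compatibility with the identifications $\alpha^k : H_\bullet(E^k,\delta^k) \to E^{k+1}$ is automatic, because at the level of representatives $\alpha^k$ amounts to adjoining one further witness, and $E^{k+1}(f)$ is given by the same formula applied to the enlarged tuple. In particular, $E^{k+1}(f)$ agrees with the map in homology induced by $E^k(f)$, so if $E^k(f)$ is an isomorphism in every bidegree then $E^{k+1}(f)$ is an isomorphism as well. Iterating this step from $k$ up to $s=r$ yields the conclusion.

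The main obstacle is the bookkeeping in the $k$-cycle verification: one must align each instance of a morphism equation at level $s$ with exactly the right slice of witnesses $\{z_{p-j+i}\}$, so that the defect terms cancel in pairs. This calculation is formally identical to the non-truncated multicomplex case treated in the literature (e.g.\ the references cited in subsection \ref{multicpx-back}), and the only role of truncation is to guarantee that no index exceeding $r$ ever appears in the sums, which is automatic from the degree constraints $|D_i|=(-i,i-1)$ and $|f_i|=(-i,i)$ combined with the range $s \le k \le r$.
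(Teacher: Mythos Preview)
Your approach is the same as the paper's, which simply asserts that $E(f)$ is well defined by functoriality of the explicit $k$-cycle/$k$-boundary description and that isomorphism propagation is the classical spectral-sequence fact; you have just written out the details the paper omits. One small correction: your boundary-witness formula $c'_{p+s} := \sum_{i=0}^{s} f_i(c_{p+s-i})$ does not land in the right bidegree (recall $|f_i|=(-i,i)$, so $f_i(c_{p+s-i})\in C'_{p+s-2i,\,q-s+2i+1}$); the expression that works is $c'_{p+s} := \sum_{i=0}^{k-1-s} f_i(c_{p+s+i})$, and with that change the telescoping verification goes through exactly as you indicate.
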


\begin{proof} The first part can be proved exactly as in the case of multicomplexes. It descends from the definition of $k$-cycles and $k$-boundaries being functorial.

The second part is a classical fact for spectral sequences that carries over to the truncated case without modifications. 
\end{proof}

At last, let us notice that there is a truncation functor $\tau_r$ from the category of multicomplexes to the category of truncated $r$-multicomplexes. It is given by simply forgetting the differentials $D_k, k \ge r+1$. The same truncation functor exists for spectral sequences, forgetting the pages with $k \ge r+1$. As the spectral sequence of a multicomplex up to page $k$ only relies on the first $k$ differentials, we have:
\begin{lemma} \label{truncation} For any multicomplex $C$ and integer $k\ge 0$, the following isomorphism of $k$-truncated spectral sequences holds:
$$ E(\tau_k C) \cong \tau_k E(C) $$
\end{lemma}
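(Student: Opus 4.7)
The plan is to unfold the explicit formulas for the multicomplex spectral sequence given in Definition \ref{multicpx-tot} and the discussion following it, and to observe that only finitely many of the differentials $D_i$ ever enter the construction of the first $k$ pages. Concretely, the groups $E^r_{pq}$ and differentials $\delta^r$ for $r \le k$ will turn out to depend only on $D_0, D_1, \ldots, D_k$, so the truncation functor $\tau_k$, which forgets $D_i$ for $i > k$, cannot affect them.

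First I would inspect the formula defining the $r$-cycles $Z^r_{pq}$: an element $x$ lies in $Z^r_{pq}$ iff there exist witnesses $z_\bullet$ with
$$ D_0 x = 0, \qquad D_k x = \sum_{i=0}^{k-1} D_i\, z_{p-k+i} \qquad (1 \le k \le r-1), $$
which involves only $D_0, D_1, \ldots, D_{r-1}$. The analogous inspection for the $r$-boundaries $B^r_{pq}$ yields the same dependence. Next I would note that the differential
$$ \delta^r_{pq}([x]) = [D_r x - D_{r-1} z_{p-1} - \cdots - D_1 z_{p-r+1}] $$
additionally involves $D_r$, and hence depends only on $D_0, \ldots, D_r$.

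Combining these observations will give canonical equalities $Z^r_{pq}(\tau_k C) = Z^r_{pq}(C)$ and $B^r_{pq}(\tau_k C) = B^r_{pq}(C)$, hence $E^r_{pq}(\tau_k C) = E^r_{pq}(C)$, with matching differentials $\delta^r$, for every $r \le k$. The structural isomorphisms $\alpha^r_{pq}$ are then forced to agree as well, since they are assembled from the same $Z$'s, $B$'s, and $\delta$'s. I do not anticipate any genuine obstacle: the content of the lemma is simply that the passage from a multicomplex to its spectral sequence is local in the depth of the differentials, and the proof reduces to reading off the indices appearing in the formulas.
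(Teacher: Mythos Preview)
Your proposal is correct and is precisely the argument the paper has in mind: the sentence preceding the lemma ("As the spectral sequence of a multicomplex up to page $k$ only relies on the first $k$ differentials") is the entire proof in the paper, and you have simply unpacked that observation by reading off which $D_i$'s appear in the explicit formulas for $Z^r_{pq}$, $B^r_{pq}$, and $\delta^r$.
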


\subsubsection{Framework and statement of the theorem}

Consider the following setting:
\begin{itemize}
\item A sequence of regular CW-complexes $X_n$. 
\item The face poset $\mathcal{P}_n$ of closed cells in $X_n$, ordered by inclusion.
\item A semicosimplicial structure on $\mathcal{P}_n$, that is a functor $\mathcal{P}_{\bullet}: \Delta_s \to \textrm{Poset}$.
\item The barycentric subdvision $\sse{X}_n := \Nerve(\mathcal{P}_n)$ of the regular CW-complexes.
\item A ring of coefficients $k$; for the sake of simplicity, we will restrict to $k=\mathbb{F}_2$.
\item The chain complex $C_n(\bullet) := C^{CW}_{\bullet}(X_n,k)$, and its subdivided version $K_n(\bullet):=NC_{\bullet}^{\Delta}(\sse{X}_n,k)$ of normalized simplicial chains;
\end{itemize}
Note that $\sse{X}_{\bullet}$ inherits a semicosimplicial structure from $\mathcal{P}_{\bullet}$ by applying the nerve construction. Our goal is to present the Bousfeld-Kan Spectral Sequence of $\sse{X}_{\bullet}$ in terms of cells in $C_n(\bullet)$.
\

Before stating the theorem, we need a few definitions.
\begin{definition} \label{subdivision-sign} Consider a regular $CW$ complex $X$ and its barycentric subdivision $X^{sd}$. Suppose that for any cell $\sigma$ in $X$
$$\partial(\sigma) = \sum_{\tau \lhd \sigma} \epsilon(\tau, \sigma) \tau \ ,$$
for some coefficients $\epsilon(\tau,\sigma)=\pm 1 \in \mathbb{Z}$. Then the subdivision map
$$\sd : C_*^{CW}(X) \to C_*^{\Delta}(X^{sd}) $$
 is given by
$$ \sd(\sigma) = \sum_{\bt \tau \in \mc(\sigma) } \epsilon(\bt \tau) \bt \tau  \ ,$$
where $\mc(\sigma)$ is the set of maximal chains $\tau_0 \lhd \ldots \lhd \tau_d = \sigma$, and  $\epsilon(\bt \tau) := \epsilon( \tau_0, \tau_1) \ldots \epsilon(\tau_{d-1}, \tau_d)$.
\end{definition}
An easy calculation shows that this must be the subdivision formula
 for the equation $\sd \partial = \partial \sd$ to hold. 
\begin{definition} For $I = \{i_1, \ldots, i_p\} \subset [n]$ a multiset with $i_1 \le \ldots \le i_p$ define $d_I : [n] \to [n+p]$ as $d_{i_1} \ldots d_{i_p}$. By convention, let us set $d_{\emptyset} = \textrm{Id}$.
\end{definition}
Referring to the setting given at the beginning of the subsection:
\begin{definition} For $\sigma \in \mathcal{P}_n$, define 
$$\mathcal{P}(\le \sigma) := \{\sigma' \in \mathcal{P}_n: \sigma' \le \sigma \} \ .$$
\end{definition}

\begin{definition} \label{mu-notation} For $I \subset [n]$ a multiset and $\sigma \in \mathcal{P}_n$, define $\mu_I(\sigma)$ as the $\mathbb{F}_2$ vector space generated by $\mathcal{P}( \le d_I \sigma) $. By convention, set $\mu(\sigma) := \mu_{\emptyset}(\sigma)$.
\end{definition}

\begin{definition} For $\sigma \in \PP_n$ and $I$ a multiset of size $|I|=p$, define $\tau_I(\sigma)$ as the augmented chain complex that has:
\begin{itemize}
\item The $\mathbb{F}_2$ vector space generated by $\Nerve \PP(\le d_I \sigma)$ as the underlying vector space;
\item The grading and the augmentation inherited by $K_{n+p}(\bullet)$.
\end{itemize}

\end{definition}

Let us outline a simple observation about $\tau_I(\sigma)$.
\begin{lemma} \label{contractible} The augmented chain complex $\tau_I(\sigma)$ is an augmented acyclic complex
, i.e. the kernel of the augmentation has trivial homology. 
\end{lemma}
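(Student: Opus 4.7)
The plan is to exhibit an explicit chain contraction, using the fact that the poset $\PP(\le d_I \sigma)$ has $d_I \sigma$ itself as a maximum element. This is a classical recipe: the nerve of a poset with a greatest element is contractible, with that element acting as a cone apex, so the augmented simplicial chain complex on $\Nerve \PP(\le d_I \sigma)$ must be acyclic. I will just spell out the coning chain homotopy.

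Explicitly, define a degree $+1$ operator $h$ on $\tau_I(\sigma)$ as follows. On the augmentation module $\tau_I(\sigma)_{-1} = \F_2$, send the generator $1$ to the vertex $(d_I \sigma) \in \tau_I(\sigma)_0$. For a chain $c = (\sigma_0 < \ldots < \sigma_d) \in \Nerve \PP(\le d_I \sigma)_d$ with $d \ge 0$, set
\[ h(c) = \begin{cases} (\sigma_0 < \ldots < \sigma_d < d_I \sigma) & \text{if } \sigma_d < d_I \sigma, \\ 0 & \text{if } \sigma_d = d_I \sigma. \end{cases} \]
I then verify $\partial h + h \partial = \Id$ by splitting on whether the top element of $c$ equals $d_I \sigma$. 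When $\sigma_d < d_I \sigma$, the boundary of $h(c)$ produces the term $c$ itself (from deleting the appended $d_I \sigma$) plus the sum of the chains obtained by removing each $\sigma_i$ and then appending $d_I \sigma$; this last sum is exactly $h(\partial c)$, so the two contributions cancel over $\F_2$ and $c$ remains. When $\sigma_d = d_I \sigma$, we have $h(c) = 0$ and in $h(\partial c)$ the only surviving face is the one where the top vertex is deleted, which gives back $c$. The degree $0$ and degree $-1$ cases are handled the same way, using $h_{-1}(1) = (d_I \sigma)$ against the augmentation $\epsilon$.

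The argument is formal and I do not foresee any real obstacle: the only bookkeeping to track is the distinction between the two cases for the top element of a chain, and the degree $0$ step pairing with the augmentation. Signs are not an issue since we work over $\F_2$. Conceptually, the lemma is the standard fact that an order ideal $\{x : x \le M\}$ of any poset is contractible via the constant homotopy toward $M$, applied to $M = d_I \sigma$.
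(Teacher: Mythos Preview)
Your proof is correct and takes essentially the same approach as the paper: both rely on $d_I\sigma$ being a final object of $\PP(\le d_I\sigma)$ to conclude that its nerve is contractible, hence the augmented chain complex is acyclic. The paper simply invokes this as a standard categorical fact, whereas you spell out the explicit cone homotopy; there is no substantive difference.
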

\begin{proof} Since the category $\PP(\le d_I \sigma)$ has a final element, its nerve $\Nerve \PP(\le d_I \sigma)$ is a contractible simplicial set. It follows that its simplicial chain complex  $\tau_I(\sigma)$ is an augmented acyclic chain complex.
\end{proof}

We now turn our attention to the combinatorics of multisets.
\begin{definition} \label{union-with-shift}Consider two finite multisets $J \subset [n], I \subset [n+p]$, where $p = |J|$. We define their \textit{union with shift} $I \vee J$ as the multiset $L$ such that $d_I d_J = d_L$.
\end{definition}

\begin{example} Consider $I = \{1,1,4\}$ and $J= \{1,2\}$. Let us use the cosimplicial identity
$$ d_j d_i = d_i d_{j-1} , \ \ j > i ,$$
in order to move cofaces with large indices from right to left in the expression $d_I d_J$. Highlighting in bold the letters where we use the aforementioned identity, we can write:
$$ d_I d_J = d_1 d_1 {\textbf d_4 \textbf d_1} d_2 = d_1 d_1 d_1 {\textbf d_3 \textbf d_2} = d_1 d_1 d_1 d_2 d_2  \ .$$
Thus $I \vee J = \{1,1,1,2,2\}$.
\end{example}

\begin{theorem}[Barycentric Deformation Theorem] \label{bdthm}Fix a number $r\ge 0$. Consider a collection of maps
$$D_I : C_n(d) \to C_{n+i}(d+i-1) $$
for all $d \ge 0, I \subset [n+1], n \ge 0, |I| = i $ with $0\le i\le r$. Suppose that:
\begin{enumerate}
    \item For all multisets $K$
$$ \sum_{I \vee J = K} D_I D_J = 0 \ .$$
    \item For all cells $\sigma \in \mathcal{P}_n$, we have $D_I(\sigma) \in \mu_I(\sigma)$.
\end{enumerate}
    Then there exists an $r$-truncated multicomplex structure on $C$ with differentials given by $D_i = \sum_{|I| = i} D_I$, and a map of $r$-truncated multicomplexes $  \sd_{\bullet} :  C \to  \tau_r K$, such that $\sd_0$ is the standard subdivision map on rows. In particular, for all $1\le s \le r$ and all $p,q$ we have an isomorphism
$$ E^{pq}_s(C , D_i) \cong E^{pq}_s \left ( K, \sum (-1)^i d_i \right ) \ .$$
\end{theorem}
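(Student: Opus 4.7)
The plan is to split the theorem into three tasks: (a) verify that $D_i := \sum_{|I|=i} D_I$ indeed defines an $r$-truncated multicomplex structure on $C$; (b) construct the morphism $\sd_\bullet \colon C \to \tau_r K$ with $\sd_0$ the standard subdivision map; and (c) conclude the spectral sequence statement from a general principle. Task (a) is essentially an unpacking exercise: for a multiset $K$ of size $k$, the sum $\sum_{i+j=k} D_i D_j$ regroups exactly as $\sum_K \sum_{I \vee J = K} D_I D_J$, since by Definition~\ref{union-with-shift} every composition $D_I D_J$ with $|I|=i,|J|=j$ contributes to a unique $K = I \vee J$ of size $i+j$. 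Hypothesis (1) then gives the truncated multicomplex identities for all $k \le r$.

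For (b), I would construct the pieces $\sd_\bullet$ cell-by-cell and multiset-by-multiset. The idea is to define, for each $\sigma \in \mathcal{P}_n$ of dimension $d$ and each multiset $I \subset [n+1]$ with $|I|=i \le r$, an element $\sd_I(\sigma) \in \tau_I(\sigma)$ of simplicial degree $d+i$, and then set $\sd_i := \sum_{|I|=i} \sd_I$. For $I = \emptyset$, $\sd_\emptyset$ is the standard subdivision map of Definition~\ref{subdivision-sign}, which is the usual chain-level subdivision and is in particular a chain map, giving the $k=0$ morphism identity $\sd_0 D_0 = \partial \sd_0$. For $|I| = i \ge 1$, proceed by double induction, primarily on $i$ and secondarily on $\dim \sigma$. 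The morphism identity of the truncated multicomplex that $\sd_\bullet$ must satisfy is
\[
\partial \sd_i(\sigma) \;=\; \sum_{\substack{j+\ell = i \\ \ell \ge 1}} \sd_j(D_\ell \sigma) \;+\; \Bigl(\sum_{a}(-1)^a d_a\Bigr)\sd_{i-1}(\sigma),
\]
so the recipe is to read off the desired right-hand side from previously constructed $\sd_j$ with $j < i$ and $\sd_i$ on boundary cells, and then produce $\sd_i(\sigma)$ as a chosen chain-level primitive.

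The key point, and the main obstacle to handle carefully, is showing (i) that the right-hand side is supported inside $\tau_I(\sigma)$ for an appropriate $I$, and (ii) that it is a cycle. For (i) one uses hypothesis (2): $D_J(\sigma) \in \mu_J(\sigma)$ means each summand lands in a subcomplex of the form $\tau_{I'}(\sigma')$ with $\sigma' \lhd d_J \sigma$ and $I' \vee J$ appropriately sized, and similarly the $d_a$ terms send $\tau_{I-\{a\}}(\sigma)$ into $\tau_I(\sigma)$; the combinatorics of $\vee$ ensures all contributions to the coefficient of a fixed multiset $I$ lie inside $\tau_I(\sigma)$. For (ii), applying $\partial$ to the right-hand side and using $\partial^2 = 0$, the cosimplicial identities among the $d_a$, the inductive hypothesis on lower $\sd$'s, and identity (1) in the form $\sum_{I \vee J = K} D_I D_J = 0$, the expression collapses to zero. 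Then Lemma~\ref{contractible}, asserting that $\tau_I(\sigma)$ is augmented acyclic, produces a primitive $\sd_I(\sigma)$. This is the step where the precise bookkeeping of multisets and the condition $D_I(\sigma) \in \mu_I(\sigma)$ really earn their keep, and is where I expect most of the care to be required.

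For (c), once $\sd_\bullet$ is constructed, observe that $\sd_0$ is the standard barycentric subdivision $C^{CW}_*(X_n) \to C^\Delta_*(\sse{X}_n)$, which is a quasi-isomorphism for each $n$. On first pages, the spectral sequence of a multicomplex (or truncated multicomplex) depends only on $D_0$-homology row-by-row, so $E^1(\sd): E^1(C, D_i) \to E^1(\tau_r K)$ is an isomorphism. By Lemma~\ref{tr-sp-seq}, this implies $E^s(\sd)$ is an isomorphism for all $1 \le s \le r$. Combining with Lemma~\ref{truncation} and Theorem~\ref{barycentric-spseq} then identifies the first $r$ pages of the spectral sequence of $(C, D_i)$ with those of the Barycentric Fox--Neuwirth bicomplex, completing the proof.
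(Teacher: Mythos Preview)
Your proposal is correct and follows essentially the same route as the paper: verify the truncated multicomplex identities by regrouping over $I \vee J = K$; build multiset-indexed maps $\sd_I$ by induction on $|I|$ using the acyclicity of $\tau_I(\sigma)$ (the paper packages your cell-by-cell induction on $\dim\sigma$ as the Acyclic Covering Lemma~\ref{acyclic-covering}); then sum to $\sd_i$ and conclude via $\sd_0$ being a quasi-isomorphism together with Lemmas~\ref{tr-sp-seq} and~\ref{truncation}. Two small corrections: your displayed identity for $\partial\,\sd_i(\sigma)$ omits the term $\sd_i(D_0\sigma)$ (you do mention ``$\sd_i$ on boundary cells'' in prose, and this term is exactly why the secondary induction on $\dim\sigma$ is needed, so write it in); and Theorem~\ref{barycentric-spseq} is not part of the proof of this general statement---it enters only later when one specializes to the Fox--Neuwirth setting.
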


\begin{remark} \label{strategy-barycentric} The idea to show the theorem is the following. We set $D_i := \sum_{|I|=i} D_I$, and the first condition yields the (truncated) multicomplex equation. Likewise, we look for maps $\sd_I$, indexed by appropriate multisets $I$, satisfying
$$ \sum_{I \vee J} \sd_I D_J = \sum_{I \vee J} D_I \sd_J \ .$$
This is equivalent to
$$ \sum_{\substack{I \vee J = K\\ J \neq \emptyset}} \sd_I D_J -  \sum_{\substack{I \vee J = K\\ I \neq \emptyset}} D_I \sd_J = D_0 \sd_K - \sd_K D_0 $$
 that is, the left-hand side is null-homotopic. As the left-hand side only involves multisets with cardinality smaller than $|K|$, this provides a strategy to find $\sd_K$ inductively as null-homotopies of a given map. The algebraic tool employed to show a map is null-homotopic is the acyclic covering lemma, presented in the next subsection 
\end{remark}
\subsubsection{Acyclic Covering Lemma}
\begin{lemma} \label{acyclic-covering} Let $X_{\bullet}, Y_{\bullet}$ be two 
non-negatively chain complexes (over $\F_2$) and $f: X_{\bullet} \to Y_{\bullet}$ a  
chain map. 
Fix a homogeneous basis $\mathcal{B}$ of $\bt X$. 
Consider a family of acyclic subcomplexes $\tau(x) \subseteq Y_{\bullet}$ 
such that $\tau(x) \subseteq \tau(y)$ if $x$ is a summand of $\partial(y)$. 
Suppose that $f(x) \in \tau(x)$ for all $x \in \mathcal{B}$. Then there exists a homotopy $h$ from $f$ to $0$, 
that is $h : \bt X \to \bt Y[1] $ such that
$$f(x) = h\partial(x) +  \partial h(x) \ ,$$
for all $x \in X_{\bullet}$, and furthermore $h(x) \in \tau(x)$.
\end{lemma}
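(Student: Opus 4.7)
The plan is to construct $h$ by induction on the homological degree $n$ of basis elements in $\mathcal{B}$, defining $h$ one basis element at a time and then extending linearly. The acyclicity of each $\tau(x)$ provides, at each step, the preimage we need; the nesting hypothesis $\tau(x) \subseteq \tau(y)$ (when $x$ is a summand of $\partial y$) ensures that the previously constructed values of $h$ on the boundary of $y$ are already available inside $\tau(y)$, so the induction closes.

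For the inductive step, fix $x \in \mathcal{B}$ of degree $n$ and assume $h$ is already defined on basis elements of degree $< n$, with $h(z) \in \tau(z)$ and $f(z) = h\partial(z) + \partial h(z)$ for each such $z$. Writing $\partial x = \sum y$ as a sum of basis elements $y$ of degree $n-1$, the nesting hypothesis gives $h(y) \in \tau(y) \subseteq \tau(x)$, so the element $c := f(x) - h\partial(x)$ lies in $\tau(x)_n$. A short calculation using the chain-map property of $f$, the inductive formula $\partial h(y) = f(y) - h\partial(y)$, and $\partial^2 = 0$ shows $\partial c = 0$. Since $\tau(x)$ is acyclic, there exists $h(x) \in \tau(x)_{n+1}$ with $\partial h(x) = c$, i.e. $f(x) = h\partial(x) + \partial h(x)$ and $h(x) \in \tau(x)$, as required.

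For the base case $n = 0$, we have $\partial x = 0$, and we need $h(x) \in \tau(x)_1$ with $\partial h(x) = f(x)$. Here ``acyclic'' must be read in the augmented sense (compare Lemma \ref{contractible}): the augmentation on $Y_\bullet$ restricts to one on $\tau(x)$, and since $f$ is compatible with augmentations (any chain map from a complex built from the given basis into the augmented $\tau(x)$ inherits this compatibility from the setup), $f(x)$ lies in the kernel of the augmentation, which is acyclic, so a preimage $h(x)$ exists.

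The main obstacle is really just bookkeeping: verifying that at every inductive stage the element $c$ both lies in $\tau(x)$ and is a cycle there. The containment uses only the nesting hypothesis applied to the summands of $\partial x$, and the cycle condition telescopes through $f$ being a chain map and the inductive relation $\partial h = f - h\partial$ on lower-degree elements. No choice of $h(x)$ needs to be canonical; any preimage from acyclicity suffices, and linear extension along the basis $\mathcal{B}$ produces the required $h$ on all of $X_\bullet$.
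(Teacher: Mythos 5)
The paper states Lemma \ref{acyclic-covering} without proof (the text passes directly from the statement to the proof of Theorem \ref{bdthm}), so there is no argument of record to compare against; your inductive construction is the standard acyclic-models-style argument the authors evidently intend, and it is correct. The inductive step is exactly right: over $\F_2$ the element $c = f(x) + h(\partial x)$ lies in $\tau(x)$ because each summand $y$ of $\partial x$ satisfies $h(y) \in \tau(y) \subseteq \tau(x)$, and $\partial c = f(\partial x) + \partial h(\partial x) = f(\partial x) + f(\partial x) + h(\partial^2 x) = 0$, so acyclicity of $\tau(x)$ in degree $n$ supplies $h(x)$. The one place you overcomplicate matters is the base case. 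For the lemma to be true as stated, ``acyclic'' must mean that $\tau(x)$ has vanishing homology in \emph{all} degrees, including degree $0$ (otherwise $f = \mathrm{id}$ on a complex concentrated in degree $0$ with $\tau(x) = Y_{\bullet}$ would be a counterexample); and this is how the lemma is actually applied in the proof of Theorem \ref{bdthm}, where $\tau(\sigma)$ is taken to be the augmentation kernel $\tilde{\tau}_L(\sigma)$, which Lemma \ref{contractible} shows is acyclic in this strict sense. Under that reading the base case is just the inductive step with $\partial x = 0$, and your appeal to augmentation-compatibility of $f$ — a hypothesis not present in the statement — is unnecessary. With that reading fixed, your proof is complete.
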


\subsubsection{Main Proof}

We are ready to prove Theorem \ref{bdthm}.

\begin{proof} Recall Remark \ref{strategy-barycentric} regarding the strategy of the proof. The first subdivision map $\sd_0$ is given by Def \ref{subdivision-sign}. The proof is articulated in three steps.

 \textbf{First step.} In this part, we show the existence of maps indexed by multisets that contribute to the higher subdivision maps. Specifically, we prove that for any finite multiset $L$ of size $|L| = \ell$ with $r \ge \ell \ge 1$ there exists a map\footnote{Not a \textit{chain} map, since it is the higher analog of a homotopy.} $\sd_L: C_n \to K_{n+\ell}[\ell]$ such that 
$$ \sum_{I \vee J = L } \sd_I D_J  = \sum_{I \vee J = L } D_I \sd_J  \ ,$$
$$ \sd_L(\sigma) \in \tau_L(\sigma) \ \ \forall \sigma \in \FNP_m(n) \ .$$
Here for symmetry we are using symbols $D_I$ also for linear self-maps of $K_n(\bullet)$ so that $D_0=D_\emptyset$ is its differential, $D_{\{i\}}=d_i$ is a co-degeneracy and $D_I= 0$ for $|I|>1$.

We show the formula above by induction on $\ell$. If $\ell=0$, we are forced to choose $I=J=L = \emptyset$, thus the equation boils down to
$$ \sd_0 D_0 = D_0 \sd_0 \ ,$$
which is true because the subdivision map commutes with the differential. It is straightforward to see from the explicit expression that each chain contributing to $\sd_0(\sigma)$ is bounded from above by $\sigma$, which shows $\sd_0(\sigma) \in \tau_{\emptyset}(\sigma)$. 

Now suppose the thesis is true for all $L'$ of size strictly less than $\ell$. Define 
$$ f_L := \sum_{\substack{I \vee J = L \\J \neq \emptyset } } \sd_I D_J + \sum_{\substack{I \vee J = L \\I \neq \emptyset } }D_I \sd_J  \ .$$
It is well defined since it only uses $\sd_{K}$ with $K$ of size smaller than $\ell$. Recall that we are working over $\mathbb{F}_2$, so that signs are irrelevant. Note that the thesis is equivalent to 
$$ f_L = \sd_L D_0 + D_0 \sd_L \ , \ \ (*)$$
with $\sd_L$ such that $\sd_L(\sigma) \in \tau_L(\sigma)$ for all $\sigma \in \PP_n$. We want to use Lemma \ref{acyclic-covering} with
$$\bt X = C_n, \ \ \ \bt Y= K_{n+\ell}[\ell-1], \ \ \ \mathcal{B}=\PP_n, \ \ \ \tau (\sigma) = \tilde{\tau}_L(\sigma)[\ell-1], \ \ \ f=f_L \ ,$$
where $\tilde{\tau}_L(\sigma)$ is the kernel of the augmentation
$\tau_L(\sigma) \to \F_2$.

We have to check that $f_L(\sigma) \in \tilde{\tau}_L(\sigma)[l-1]$. For $l=1$ this follows from the fact
that the subdivision map $sd_0$ is a map of augmented chain complexes. For $l>1$ this is obvious for degree reasons.
By Lemma \ref{contractible}, we have that $\tilde{\tau}_L(\sigma)$ is acyclic.

It is not trivial, yet true, to see that $f_L$ is a chain map. Let us compute separately $D_0 f_L$ and $f_L D_0$, using the inductive hypothesis and the multicomplex law:
\begin{align*}
D_0 f_L & =  \sum_{ \substack{ I \vee J = L \\ I \neq \emptyset}} D_0D_I \sd_J + \sum_{ \substack{ I \vee J = L \\ I \neq \emptyset}} D_0 \sd_I D_J  \\
 & = \sum_{ \substack{ I \vee J = L \\ I \neq \emptyset}} \left ( D_I D_0 \sd_J + \sum_{ \substack{ A \vee B = I \\ A,B \neq \emptyset}} D_A D_B \sd_J \right ) \\
& + \sum_{ \substack{ I \vee J = L \\ J \neq \emptyset}} \left ( \sd_I D_0 D_J + \sum_{ \substack{ A \vee B = I \\ B \neq \emptyset}} \sd_A D_B D_J + \sum_{ \substack{ A \vee B = I \\ A \neq \emptyset}} D_A \sd_B  D_J \right )  \\
 & = \sum_{ \substack{ I \vee J = L \\ I \neq \emptyset}}  D_I D_0 \sd_J + \sum_{ \substack{ A \vee B \vee C = L \\ A,B \neq \emptyset}} D_A D_B \sd_C \\
& + \sum_{ \substack{ I \vee J = L \\ J \neq \emptyset}} \sd_I D_0 D_J + \sum_{ \substack{ A \vee B \vee C = L \\ B,C \neq \emptyset}} \sd_A D_B D_C + \sum_{ \substack{ A \vee B \vee C = L \\ A,C \neq \emptyset}} D_A \sd_B  D_C \ .
\end{align*}
On the other hand:
\begin{align*}
f_LD_0 & =  \sum_{ \substack{ I \vee J = L \\ I \neq \emptyset}} D_I \sd_JD_0 + \sum_{ \substack{ I \vee J = L \\ I \neq \emptyset}} \sd_I D_JD_0 \\
 & = \sum_{ \substack{ I \vee J = L \\ I \neq \emptyset}} \left ( D_I D_0 \sd_J + \sum_{ \substack{ B \vee C = J \\ B \neq \emptyset}} D_I D_B \sd_C + \sum_{ \substack{ B \vee C = J \\ B \neq \emptyset}} D_I \sd_B  D_C\right ) \\
& + \sum_{ \substack{ I \vee J = L \\ J \neq \emptyset}} \left ( \sd_I D_0 D_J + \sum_{ \substack{ B \vee C = J \\ B,C \neq \emptyset}} \sd_I D_B D_C  \right )  \\
 & = \sum_{ \substack{ I \vee J = L \\ I \neq \emptyset}} D_I D_0 \sd_J + \sum_{ \substack{ A \vee B \vee C = L \\ A,B \neq \emptyset}} D_A D_B \sd_C + \\
 & +\sum_{ \substack{ A \vee B \vee C = L \\ A,C \neq \emptyset}} D_A \sd_B  D_C + \sum_{ \substack{ I \vee J = L \\ J \neq \emptyset}} \sd_I D_0 D_J + \sum_{ \substack{ A \vee B \vee C = L \\ B,C \neq \emptyset}} \sd_A D_B D_C \ .
\end{align*}
We are left with showing that $f_L(\sigma)$ is contained in $\tau_L(\sigma)$ for all $\sigma \in \PP_n$. Let us start with the term on the left. By hypothesis, we have $D_J(\sigma) \subset \tau_J(\sigma)$, so that there exists a finite set $S$ such that
$$ D_J(\sigma) = \sum_{s \in S} \Lambda_s  \ ,$$
with $\Lambda_s \le d_J \sigma$. Applying $\sd_I$ we get
$$ \sd_I D_J(\sigma) = \sum_{s \in S} \sd_I \Lambda_s  \ .$$
By inductive hypothesis we have $\sd_I(\Lambda_s) \in \tau_I(\Lambda_s)$. Since $\tau_I$ is monotone with respect to the argument and $\Lambda_s \le d_J\sigma$, we have 
$$\sd_I(\Lambda_s) \in \tau_I(d_J \sigma)= \mathbb{F}_2 \Nerve \PP(\le d_I d_J\sigma) = \mathbb{F}_2 \Nerve \PP(\le d_L \sigma) = \tau_L(\sigma) \ .$$
Now let us analyze the term on the right. We have that $\sd_J(\sigma) \in \tau_J(\sigma)$, thus there exists a finite set $S$ such that
$$ \sd_J(\sigma) = \sum_{s \in S} \Lambda_s \ ,$$
with $\Lambda_s \le d_J \sigma$. If $|I| \ge 2$, $D_I$ is zero and there is nothing to show. If $I= \{i\}$, then 
$$ D_I \sd_J(\sigma) = d_i \sum_{s \in S} \Lambda_s = \sum_{s \in S} d_i \Lambda_s \in \tau_L(\sigma)  \ .$$
There are no other cases since $I \neq \emptyset$. 

We can then apply Lemma \ref{acyclic-covering} and obtain the existence of a map $\sd_L : C_n \to K_{n+\ell}[\ell-1]$ that satisfies the equation $(*)$. 

\textbf{Second step.} Here, we put together the contributions coming from multiset-indexed subdivision maps and show that they provide higher subdivision maps. Set
$$ \sd_i := \sum_{ |I| = i } \sd_I  \ .$$
We have to show, for all $r \ge k \ge 1$, that
$$ \sum_{i+j=k} \sd_i D_j +  \sum_{i+j=k} D_i \sd_j = 0 \ .$$
Let us use the splitting into multiset-indexed maps:
\begin{align*}
& \sum_{i+j=k}\left ( \sum_{ |I| = i} \sd_I \right ) \left ( \sum_{|J|=j} D_J \right )  + \sum_{i+j=k} \left ( \sum_{|I|=i} D_I \right )  \left ( \sum_{ |J| = j} \sd_J \right ) = \\
& \sum_{i+j=k} \sum_{ |I| = i, |J| = j} \sd_I D_J  + \sum_{i+j=k}  \sum_{|I|=i, |J| = j } D_I  \sd_J  = \\
& \sum_{|K| = k } \left ( \sum_{ I \vee J = K } \sd_I D_J + \sum_{ I \vee J = K } D_I \sd_J \right ) = \boxed{0} \ .
\end{align*}
Each summand is zero because of what we showed in the first part. The second equality (reparametrization) deserves a brief explanation. Both sides parametrize couples $(I,J)$ such that $|I \vee J | = |I| + |J| = k $: in the LHS we first choose the size of $I$ ad $J$ and then which elements they contain; in the RHS we first choose $I \vee J = K$ and then how to split it in two multisets. 

\textbf{Third step.} Let us draw the conclusions regarding the spectral sequence. Since the employed map $\sd_0$ is the subdivision map on rows, it is a quasi-isomorphism by standard arguments. Thanks to Lemma \ref{tr-sp-seq},  we deduce that there is an isomorphism for all $p,q$ and $1\le s \le r$
$$ E^{pq}_s(C, D_i) \cong E^{pq}_s \left (\tau_r K, \sum (-1)^i d_i \right ) \ .$$
Lemma \ref{truncation} ensures that the same isomorphism holds without the truncation $\tau_r$, and the theorem is proved.
\end{proof}

\section{Fox-Neuwirth Multicomplex in dim 3} \label{fn-mcpx-3}
Before diving into the rather technical description of differentials, we want to motivate their definition geometrically. In Section \ref{fox-intuition}, we sketched how Kontsevich cosimplicial structure would act on Fox-Neuwirth strata, if they would extend from $\Conf_n(\mathbb{R}^m)$ to $\Kons_m(n)$. Here we adopt the same approach, but we consider the cosimplicial structure on Kontsevich spaces that doubles points along the horizontal \textbf{direction}\footnote{Recall that the cosimplicial structure on $\Kons_m$ depends on the choice of a \textit{multiplication} $\mu \in  \Kons_m(2) \cong S^{m-1}$ that prescribes the direction of doubling. See Section \ref{kons-cosimp} for further details.} $e_1$. For the sake of simplicity, we will use the "bar notation" to represent Fox-Neuwirth trees: no bars mean $<_2$; one bar means $<_1$; two bars mean $<_0$. Let us consider a point $\vect{x} \in \Conf(412|53||76|8)$ as in the following picture:
\begin{figure}[H]
\centering
\includegraphics[width=5.5cm]{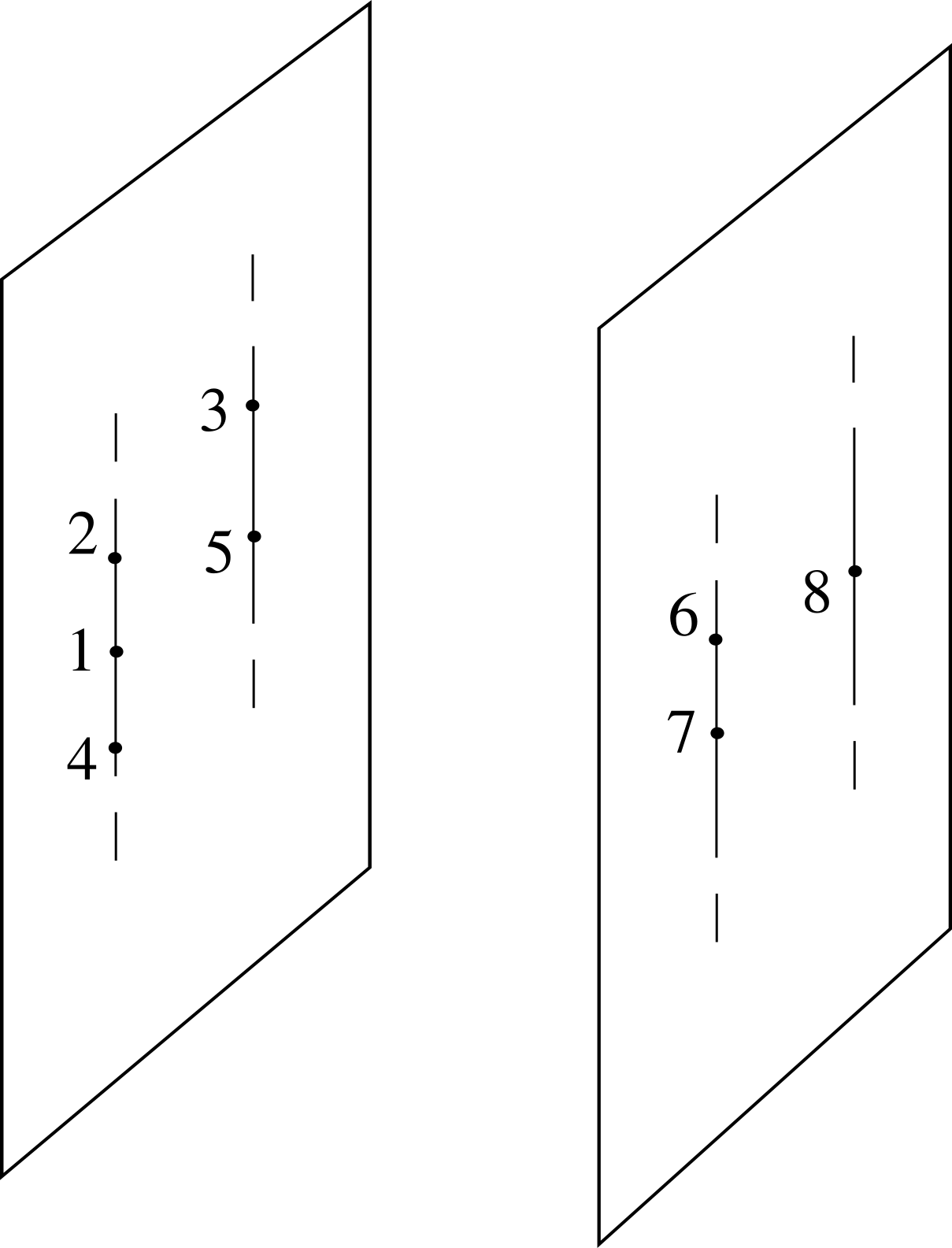}
\end{figure}
And its image $\ phi(\vect{x}) \in \Kons_3(8)$. Applying the differential $d_3$ of $\Kons_3$ we get
\begin{figure}[H]
\centering
\includegraphics[width=9cm]{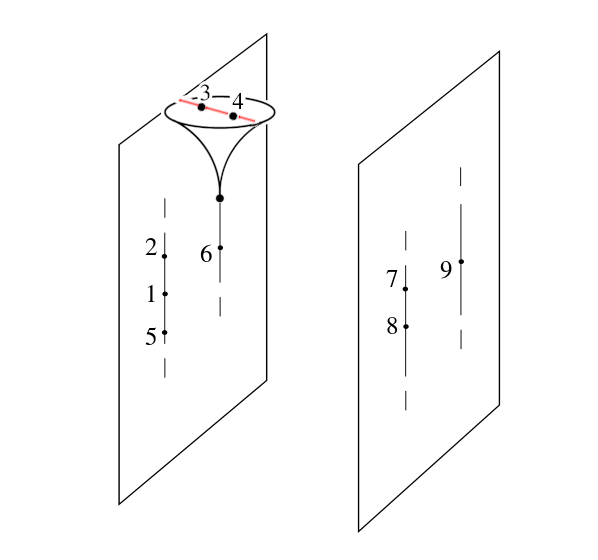}
\end{figure}
When we "resolve" the infintesimal horizontal perturbation, the new point $4$ will lie on a different plane. Consequently, we need to distribute not only the points aligned with $3$ but also those lying on the same plane. However, not all distributions are admissbile, since $D_1$ has to preserve the dimension of the cell. One can visualize the (co)dimension of a Fox-Neuwirth cell based on "anchorings". Imagine to anchor each point on a line to the point above with a vertical arrow; if there is no such point, anchor it with an horizontal arrow to the line right to it; if there is no such line, do not put any anchor. Applying this paradigm to $\vect{x}$ gives
\begin{figure}[H]
\centering
\includegraphics[width=5.5cm]{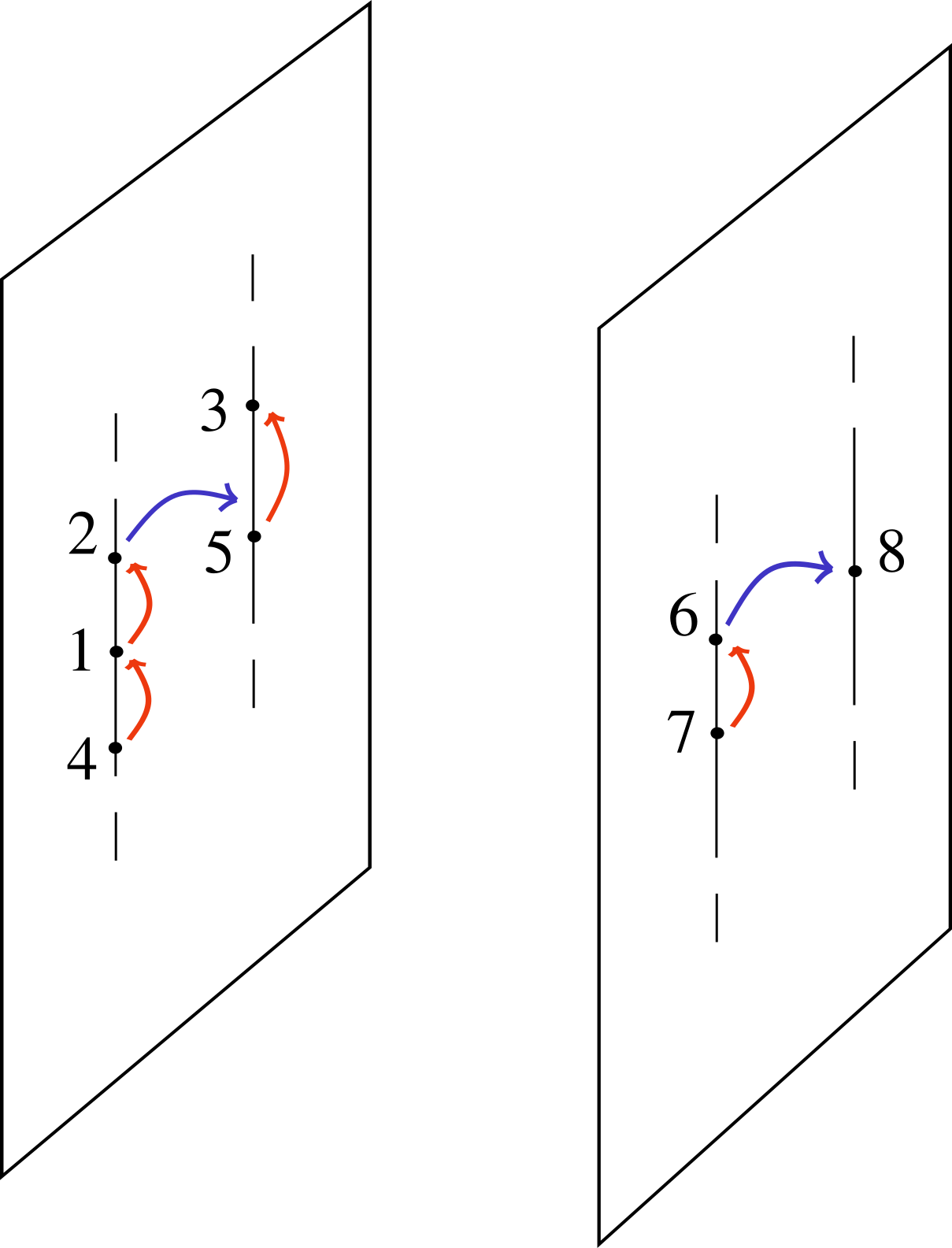}
\end{figure}
The codimension of the cell to which the point belongs is then given by
$$ 2 \times \textcolor{red}{(\textrm{\# vertical anchors})} + \textcolor{blue}{(\textrm{\# horizontal anchors})}$$
Indeed, a vertical anchor means two equal coordinates, and an horizontal anchor means one equal coordinate. If we want to leave the number of vertical and horizontal arrows unchanged when distributing, we are free to choose where to send $5$, but points $4,1,2$ should move together: otherwise, we would change a vertical anchor into an horizontal one. All in all, we make the educated guess:
$$ d_3(412|53||76|8) = 512|63||4||76|8+63||512|4||76|8+512|3||64||76|8+3||512|64||76|8$$
On the other hand, adding a point to $\pm \infty$ in the horizontal direction requires no distribution, as the new point will not share any component with the others. The sum of all such doublings, as well as the extremal contributions, turns out to provide a working formula for $D_1$.

Unfortunately, our geometric understanding does not extend to higher differentials $D_2, D_3$: not all contributions that could be sensible to include make their appearence. While a partial understanding of this phenomenon is possible for $D_2$ (see Remark \ref{d2-explanation}), things become more intricate  for $D_3$: see figure \ref{core-positions} to get a grasp of the
formula we found.

We bridged this gap with an algebraic approach. When solving for $D_2(\Gamma)$ in the multicomplex equation
$$ D_0 D_2(\Gamma) = D_2 D_0(\Gamma) + D_1 D_1(\Gamma) $$
the right-hand side only involves terms with lower differentials or with smaller Fox-Neuwirth Trees $\Lambda < \Gamma$. We can thus proceed to find $D_k(\Gamma)$ by induction on $k+a$, where $a$ is the codimension\footnote{Since we work with the reverse face poset structure, the differential $D_0$ decrease the codimension by one.} of the stratum associated to $\Gamma$. Formulas for $D_2(12), D_3(123)$ and $ D_3(12|3)$ were found in this way, and then extended to arbitrary Fox-Neuwirth trees via the distribution mechanism.
\subsection{Fox Polynomials} \label{foxpoly}
To streamline the explanation of the $D_i$ formulas, we employ a schematic presentation of "distributions" that still happen in dimension 3, which we call "Fox polynomials".
\begin{definition} Consider the following alphabet:
$$\Sigma = \{X_1, X_2, \ldots \} \cup \{Y_1, Y_2, \ldots \} \cup \{\partial Y_1, \partial Y_2, \ldots \} \cup \{Z_1, Z_2, \ldots \} \cup \{1,2,3, \ldots \} \cup \{"|"\}  \ .$$
Call $X_i$ 2-cloud variables, $Y_j$ 1-cloud variables, $\partial Y_j$ derived 1-cloud variables, $Z_k$ 0-cloud variables, numbers $1,2,3..$ called constants and $"|"$ bars. 
A Fox monomial $M$ is a word in $\Sigma$ such that
\begin{enumerate}
\item $M$ doesn't start or finish with a bar;
\item There can't be more than $2$ consecutive bars;
\item 1-clouds (derived or not) are always separated by a bar from other letters;
\item 0-clouds are separated by two bars from other letters;
\item Each 0-cloud is repeated at most once;
\item There is at most one derived variable;
\item Numbers must be distinct.
\end{enumerate}
A Fox polynomial (over $\mathbb{F}_2$) is a $\mathbb{F}_2$ linear combination of Fox monomials.
We denote by $\Fox_3[X_i, Y_j, Z_k]_{IJK}$ the set of Fox polynomials with variables $\{X_i\}_{i \in I} \cup \{Y_j, \partial Y_j\}_{j \in J} \cup \{Z_k\}_{k \in K} $.
\end{definition}

\begin{example}
The following are Fox monomials:
$$ Z_1||X_11X_22X_1, \ \ \ Y_1 | X_1 34 X_2 5 | \partial Y_2 | Y_3||Z_5, \ \ \ X_5X_7X_9||Y_1|Y_2||X_5X_7, \ \ \ Y_1|X_1iX_2jX_3|\partial Y_2 \ \ \ \textrm{ for } i\neq j \textrm{ in } \mathbb{N}_{>0}  \ .$$
\end{example}
\begin{definition} The support $|L|$ of a Fox monomial $L$ is the set of numbers appearing in $L$.
\end{definition}
\begin{definition} A Fox monomial in which variables do not appear is called numerical. If bars do not appear, it is called a numerical $2$-cloud. If double bars do not appear, it is called a numerical $1$-cloud. 
\end{definition}

We can define a few operations on Fox monomials.
\begin{definition}[Relabeling] Given an injective map $\sigma : \mathbb{N} \to \mathbb{N}$ , the map
$$\Fox_{\sigma} : \Fox_3[X_i,Y_j,Z_k]_{IJK} \to \Fox[X_i, Y_j, Z_k]_{IJK}$$
is obtained by applying $\sigma$ to all the constants appearing in the polynomial. For $i \in \mathbb{N}$, we denote as $\Delta_i$ the map $\Fox_{d_i}$, where $d_i$ is the standard map $d_i(x) = x+ 1_{x \ge i}$. For a finite set of indices $i_1 < \ldots < i_k$, we denote as $\Delta_{i_1 \ldots i_k} = \Delta_{i_1} \ldots \Delta_{i_k}$
\end{definition}

\begin{definition}[Elimination] \label{elimination} Given $\ell \in |L|$, define $e_{\ell}(L)$ in the following way. Suppose
$$L= L_1 |^a \ell |^b L_2  \ ,$$
with $L_1, L_2$ Fox monomials and $a,b \in \{0,1,2\}$. Then 
$$ e_{\ell}(L) = L_1 |^{\max\{a,b\}} L_2 \ .$$
\end{definition}
It is easy to check that conditions 1-7 are still satisfied.
\begin{lemma} \label{commutation} If $\ell \neq \ell'$ in $|L|$ for $L$ Fox monomial, then
$$ e_{\ell} e_{\ell'}(L) = e_{\ell'} e_{\ell}(L)  \ .$$
\end{lemma}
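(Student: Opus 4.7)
The plan is to proceed by case analysis on the relative position of $\ell$ and $\ell'$ inside $L$. Without loss of generality I would assume $\ell$ appears strictly to the left of $\ell'$, the opposite case being symmetric. Since by condition 1 the word $L$ neither starts nor ends with a bar, the decomposition of $L$ into maximal bar-free tokens separated by runs of $0$, $1$, or $2$ bars is unique; the whole argument then reduces to looking at the tokens and bar-runs immediately surrounding $\ell$ and $\ell'$, because $e_\ell$ is a local operation at the position of $\ell$.

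The easy case is when $\ell$ and $\ell'$ are \emph{not adjacent} as tokens, i.e., at least one other token (a number, variable, or cloud) lies strictly between them. Then one can write uniquely
$$L = L_1 |^a \ell |^b M |^c \ell' |^d L_2$$
with $M$ a nonempty Fox submonomial and $L_1, L_2$ submonomials not starting or ending with bars. The local windows $|^a \ell |^b$ and $|^c \ell' |^d$ are disjoint, so unwinding Definition \ref{elimination} directly gives
$$ e_{\ell'} e_\ell(L) \;=\; L_1 |^{\max\{a,b\}} M |^{\max\{c,d\}} L_2 \;=\; e_\ell e_{\ell'}(L). $$

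The genuinely interactive case is when $\ell$ and $\ell'$ are \emph{adjacent tokens}, so $L = L_1 |^a \ell |^b \ell' |^c L_2$. Here the bar-run $|^b$ plays the role of ``right bars of $\ell$'' when computing $e_\ell$ and of ``left bars of $\ell'$'' when computing $e_{\ell'}$. A direct calculation gives
$$ e_{\ell'}e_\ell(L) \;=\; L_1 |^{\max\{\max\{a,b\},\,c\}} L_2, \qquad e_\ell e_{\ell'}(L) \;=\; L_1 |^{\max\{a,\,\max\{b,c\}\}} L_2, $$
and equality is immediate from associativity of $\max$.

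Boundary situations in which $L_1$ or $L_2$ is empty (forcing $a = 0$ or $d = 0$ by condition 1, with the natural convention that a stranded bar-run at an endpoint is absorbed into the boundary) reduce to the two computations above after setting the corresponding parameters to zero. The only ``obstacle'', which is really just bookkeeping, is to verify that the outputs still satisfy conditions 1--7; the nontrivial point is $\max\{a,b\} \le 2$ in Case 1 and $\max\{a,b,c\} \le 2$ in Case 2, and both bounds follow since each of $|^a,|^b,|^c$ already sits inside $L$ and so has length at most $2$ by condition 2. Nothing deeper than this is needed: commutativity of the two eliminations is morally just commutativity and associativity of $\max$ on the bar-runs adjacent to $\ell$ and $\ell'$.
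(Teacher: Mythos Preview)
Your proof is correct. The paper explicitly leaves this proof to the reader, so there is nothing to compare against; your two-case analysis (non-adjacent versus adjacent occurrences of $\ell$ and $\ell'$, reducing to associativity of $\max$) is exactly the kind of straightforward verification the authors had in mind, and your handling of the boundary conventions is appropriate.
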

The proof is left to the reader. This allows us to give the following
\begin{definition}[Restriction] Consider $S \subset |L|$ and suppose $|L| \setminus S = \{p_1, \ldots, p_k \}$. Then
$$ \res_S L := e_{p_1} \ldots e_{p_k} (L)  \ .$$
The order doesn't matter because of Lemma \ref{commutation}. We define two variants $\res_S^X, \res_S^Y$, which are equal to $\res_S$ in case $S \neq \emptyset$, but return $\emptyset_X, \emptyset_Y$ respectively in case $S=\emptyset$. These are dummy symbols that obey the following rules. Given $L_1, L_2$ monomials and $p,q \in \{0,1,2\}$:
\begin{itemize}
\item $L_1|^p \emptyset_X |^q L_2 = L_1 |^{\max\{p,q\} } L_2$ if $p=0$ or $q=0$, and $0$ otherwise;
\item $L_1 |^p \emptyset_Y |^q L_2 = L_1 |^{\max \{p,q\} } L_2 $ if $p=1$ or $q=1$, and $0$ otherwise.
\end{itemize}
In case the dummy symbol is at the beginning resp. at the end of the word (that is $L_1 = \emptyset$ resp. $L_2 = \emptyset$), we set by convention $p=3$ resp. $q=3$.
 In other words, the symbol disappears if the number of bars close to it are in the right quantity, and yields zero otherwise. 
\end{definition}
We need another preliminary definition.
\begin{definition}[Upper-differential] Given $\vect{b} = b_1 \ldots b_p$ a numerical $2$-cloud, define
$$ \partial \vect{b} = \sum_{ \substack{U \sqcup V = |\vect{b}| \\ U,V \neq \emptyset }} \res_U( \vect{b})\  |\ \res_V (\vect{b}) \ ,$$
with the convention that it yields zero if the summation is empty. If $\vect{a} = \vect{a}_1 | \ldots |\vect{a}_n$ is a numerical $1$-cloud, where $a_i$ are numerical $2$-clouds, define
$$ \partial \vect{a} = \sum_{i=1}^n \vect{a}_1 | \ldots | \partial \vect{a}_i | \ldots | \vect{a}_n \ .$$
\end{definition}

We are ready to define the fundamental operation on Fox monomials, that relate them with Fox-Neuwirth trees.

\begin{definition}[X-evaluation]Given $M$ a Fox monomial in $\Fox_3[X_i,Y_j,Z_k]_{IJK}$, an index $\alpha \in I$ and $T$ a numerical $2$-cloud such that $|T| \cap |M| = \emptyset$, suppose 
$$ M = A_0 |^{p_1} X_{\alpha} |^{q_1} A_1 \ldots A_{d-1} |^{p_d} X_{\alpha} |^{q_d} A_d \ ,$$
with $p_k, q_k \in \{0,1,2\}$ and $A_k$ Fox monomials in $\Fox_3[X_i, Y_j, Z_k]_{I\setminus \alpha, JK}$. Define
$$ \ev^{X_i}_{T}(M) = \sum_{U_1 \sqcup \ldots \sqcup U_d = |T|  }  A_0 |^{p_1} \res^X_{U_1}(T) \ |^{q_1} A_1 \ldots A_{d-1} |^{p_d} \res^X_{U_d}(T) |^{q_d} A_d   \ .$$
\end{definition}
Since the definition is quite intricated, let us give an
\begin{example} Consider $M= Y_1 | X_15 | Y_2 || X_1 $, $\alpha = 1$ and $T=12$. We can partition $12$ in four ways: $(\emptyset,12), (1,2), (2,1), (12, \emptyset)$. Each of these partitions gives rise to a term in the sum:
\begin{align*}
(\emptyset,12) \ \ & \Rightarrow \ \ Y_1 | \emptyset_X 5 | Y_2 || 12 = Y_1 | 5 | Y_2 || 12 \\
(1,2) \ \ & \Rightarrow \ \ Y_1 | 15 | Y_2 || 2  \\
(2,1) \ \ & \Rightarrow \ \ Y_1 | 25 | Y_2 || 1 \\
(12,\emptyset) \ \ & \Rightarrow \ \ Y_1 | 125 | Y_2 || \emptyset_X = 0 \ .
\end{align*}
Thus
$$\ev^{X_1}_{12}(M) = Y_1 | 5 | Y_2 || 12+Y_1 | 15 | Y_2 || 2 +Y_1 | 25 | Y_2 || 1 \ .$$
\end{example}
There is an analogous definition for $Y$ variables, that is slightly more intricated because of the derived variables.
\begin{definition}[Y-evaluation] Given $M$ a Fox monomial in $\Fox_3[X_i,Y_j,Z_k]_{IJK}$, an index $\beta \in J$ and $S = s_1 | \ldots | s_h$ a numerical $1$-cloud such that $|S| \cap |M| = \emptyset$, suppose 
$$ M = A_0 |^{p_1} Y_j |^{q_1} A_1 \ldots A_r |^{p_r} \partial Y_j |^{q_r} A_{r+1} \ldots A_d |^{p_{d+1}} Y_j |^{q_{d+1}} A_{d+1}  \ ,$$
with $p_k, q_k \in \{0,1,2\}$ and $A_k$ Fox monomials in $\Fox_3[X_i, Y_j, Z_k]_{I, J\setminus \beta,K}$. Define
$$ \ev^{Y_j}_{S}(M) = $$
$$\sum_{(U_1, \ldots, U_{d+1}) \in \Sat(S) }  A_0 |^{p_1} \res^Y_{U_1}(S) \ |^{q_1} A_1 \ldots A_r |^{p_r} \partial \res^Y_{U_r}(S) |^{q_r} A_{r+1} \ldots A_d |^{p_{d+1}} \res^Y_{U_{d+1}}(S) |^{q_{d+1}} A_{d+1}  \ ,$$
where 
$$\Sat(S) = \{(U_1, \ldots, U_n): n \in \mathbb{N}, \ \ U_1 \sqcup \ldots \sqcup U_n = |S|  $$
$$ \ \ \{U_1, \ldots, U_n\} \textrm{ is coarser than } \{ |s_1|, \ldots, |s_h| \} \textrm{ as partitions of } |S| \} \ .$$
Note that $\partial \res^Y_{U_r}(S)$ is in general a summation of terms $\sum_{\lambda} A_{\lambda}$. We use the convention about bars being bilinear, that is $L_1 |^p (\sum_{\lambda} A_{\lambda}) |^q L_2 = \sum_{\lambda} L_1 |^p A_{\lambda} |^q L_2$. If the summation is zero, the whole term is zero.
\end{definition}
As there are some subtleties with respect to the previous definition, let us give another example:
\begin{example} Consider $M= X_1X_2||Y_2||\partial Y_2|X_12X_2|Y_2 $, $\beta =2$ and $S=89|4$. As in the previous case, each partition of $S$ in three numerical clouds give rise to a term in the summation. However, we must take partitions to be coarser than $(89,4)$, that is each numerical 2-cloud must be contained in a block of our partition.
\begin{align*}
(4,89,\emptyset) \ \ \Rightarrow &  \ \ X_1X_2||Y_2||\partial Y_2|X_12X_2|Y_2 = X_1X_2 || 4|| \partial 89 | X_1 2 X_2 | \emptyset_Y = \\
= & \ \ X_1X_2 || 4|| ( 8|9 + 9|8) | X_1 2 X_2 \\
= & \ \ X_1X_2 || 4||8|9| X_1 2 X_2+X_1X_2 || 567||9|8 | X_1 2 X_2\\
(89,4,\emptyset) \ \ \Rightarrow &  \ \ X_1X_2||89||\partial 4|X_12X_2|Y_2 = 0 \\
(4,\emptyset, 89) \ \ \Rightarrow &  \ \ X_1X_2||4||\emptyset_Y|X_12X_2|89 = X_1X_2||4||X_12X_2|89 \\
(89,\emptyset, 4) \ \ \Rightarrow &  \ \ X_1X_2||89||\emptyset_Y|X_12X_2|4 = X_1X_2||89||X_12X_2|4 \\
(\emptyset,4,89) \ \ \Rightarrow & \ \ X_1X_2||\emptyset_Y||4|X_12X_2|89 = 0 \\
(\emptyset,89,4) \ \ \Rightarrow & \ \ X_1X_2||\emptyset_Y||89|X_12X_2|4 = 0 \\
(89|4,\emptyset, \emptyset) \ \ & \Rightarrow \ \ X_1X_2||89|4||\emptyset_Y|X_12X_2|\emptyset_Y = X_1X_2||89|4||X_1 X_2\ .
\end{align*}
Thus
\begin{align*}
\ev^{Y_2}_{4|89}(X_1X_2||Y_2||\partial Y_2|X_12X_2|Y_2) & = X_1X_2 || 4||8|9| X_1 2 X_2+X_1X_2 || 4||9|8 | X_1 2 X_2 \\
& +X_1X_2||4||X_12X_2|89+X_1X_2||89||X_12X_2|4+X_1X_2||4|89||X_1 X_2 \ .
\end{align*}
\end{example}

It is easy to see that evaluations in different variables commute, since the various partitions are independent. Furthermore, we can define the $Z-$evaluation in a 0-cloud as the literal substitution, since a $Z$ variable can only occur once. Thereby we can give the following
\begin{definition}[Total evaluation] Consider three sets of indices $I = \{i_1, \ldots, i_p\}, J = \{j_1, \ldots, j_q\}, K= \{k_1, \ldots, k_r\}$, a Fox monomial $M \in \Fox_3[X_i,Y_j,Z_k]_{IJK}$, a tuple of numerical 2-clouds $\bar{X}  = ( \bar{X}_{i_1}, \ldots, \bar{X}_{i_p} )$, a tuple of numerical 1-clouds $\bar{Y} = (\bar{Y}_{j_1}, \ldots, \bar{Y}_{j_q})$ and a tuple of numerical 0-clouds $\bar{Z} = (\bar{Z}_{k_1}, \ldots, \bar{Z}_{k_r} )$. Denote by
$$ | \bar{X} \bar{Y} \bar{Z} | :=  \left ( \bigcup_{i \in I} |\bar{X}_i| \right ) \cup \left ( \bigcup_{j \in J} |\bar{Y}_j| \right ) \cup  \left ( \bigcup_{k \in K} |\bar{Z}_k| \right )  \ .$$ 
Suppose that the variables have pairwise disjoint support, that is $|L| \cap |L'| = \emptyset$ for all $L \neq L'$ in $\{\bar{X}_{i_1}, \ldots, \bar{X}_{i_p}, \bar{Y}_{j_1}, \ldots, \bar{Y}_{j_q}, \bar{Z}_{k_1}, \ldots, \bar{Z}_{k_r} \} $, and also
$$ |\bar{X} \bar{Y} \bar{Z} | \cap |M| = \emptyset \ .$$
Define the total evaluation of $M$ in $\bar{X}, \bar{Y}, \bar{Z}$ as the composition of successive evaluations
$$\ev_{\bar{X} \bar{Y} \bar{Z} }(M) := \ev^{Z_{k_r}}_{\bar{Z}_{k_r}} \ldots \ev^{Z_{k_1}}_{\bar{Z}_{k_1}} \ev_{\bar{Y}_{j_q} }^{Y_{j_q}} \ldots \ev_{\bar{Y}_{j_1} }^{Y_{j_1}} \ev_{\bar{X}_{j_p} }^{X_{j_p}} \ldots \ev_{\bar{X}_{j_1} }^{X_{j_1}}(M)  \ .$$
We extend this map linearly to polynomials, with the convention that $\ev_{\bar{X} \bar{Y} \bar{Z}}(M)  = 0$ whenever $|M| \cap | \bar{X} \bar{Y} \bar{Z} | \neq \emptyset$.
\end{definition}
\vspace{3pt}
Note that if the clouds tuples $\bar{X}, \bar{Y}, \bar{Z}$ and the monomial $M$ we use are such that
$$ | \bar{X} \bar{Y} \bar{Z} | \cup |M| = \{1, \ldots, n \} \ ,$$
then $\ev_{\bar{X} \bar{Y} \bar{Z} } (M)$ lives in $\MFN_3(n)$, since we can interpret a numerical monomial as a Fox-Neuwirth tree. 
\begin{notation} Sometimes we will use the standard notation for evaluation of polynomials and write $M(\bar{X}, \bar{Y}, \bar{Z})$ in place of $\ev_{\bar{X} \bar{Y} \bar{Z}}$.
\end{notation}

\subsection{Differential formulas}
We are ready to give formulas for higher differentials. We will analyze the expressions $D_0, D_1, D_2, D_3$ separately, giving examples and insights on the way to lighten the technicality of the section.

First of all we write the differential $D_0$ of $\MFN_3(n)_{\bullet}$ using our formalism.
\begin{proposition}
$D_0$ is the only linear operator satisfying 
$$D_0(Z || W)=D_0(Z)||W+ Z||D_0(W)$$
and such that for each $1$-cloud $Y$ we have that 
$$D_0(Y)= Y || Y + \partial(Y)$$
\end{proposition}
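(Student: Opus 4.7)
The plan is to prove uniqueness from the canonical decomposition of Fox-Neuwirth trees and then verify that the cellular boundary of $\NBZ_3(n)$ over $\F_2$ satisfies both identities. For uniqueness, every $\Gamma \in \MFN_3(n)$ admits a unique expression $\Gamma = Z_1 \mathbin{||} Z_2 \mathbin{||} \cdots \mathbin{||} Z_k$ with each $Z_i$ a 1-cloud; iterating the Leibniz-type rule forces $D_0(\Gamma) = \sum_{i} Z_1 \mathbin{||} \cdots \mathbin{||} D_0(Z_i) \mathbin{||} \cdots \mathbin{||} Z_k$, and the second identity then explicitly determines $D_0$ on each 1-cloud.

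For existence, I would unpack the cellular boundary of $\NBZ_3(n)$ over $\F_2$ using the codimension-one face description recalled in Section~\ref{configurations}: a codimension-one face of $c(\Gamma)$ is obtained by selecting a position $p$ with $a_p > 0$, decreasing $a_p$ by one, and recording all exit-poset-admissible adjustments of the underlying permutation (signs drop out over $\F_2$). The Leibniz rule is immediate: if $\Gamma = Z \mathbin{||} W$, the position separating $Z$ and $W$ carries depth $0$ and cannot be lowered, so every codimension-one face modifies strictly inside $Z$ or inside $W$, and the boundary splits as $D_0(Z) \mathbin{||} W + Z \mathbin{||} D_0(W)$.

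The 1-cloud formula requires more care. For $Y = Y_1 | Y_2 | \cdots | Y_r$ with each $Y_j$ a 2-cloud, codimension-one faces split into two disjoint families. Type (A): lowering some $a_p = 2$ to $1$ inside a 2-cloud $Y_j$, yielding all trees $Y_1 | \cdots | \res_U(Y_j) | \res_V(Y_j) | \cdots | Y_r$ for non-trivial partitions $|Y_j| = U \sqcup V$, which is precisely $\partial(Y)$ by definition. Type (B): lowering $a_p = 1$ to $0$ replaces some $|$ with $\mathbin{||}$, and the exit-poset rule permits both flipping the two resulting halves and redistributing entire 2-clouds between them; summing over all admissible redistributions reproduces the Fox-polynomial evaluation of $Y \mathbin{||} Y$, whose expansion ranges over partitions of $|Y|$ into two non-empty parts coarser than the 2-cloud partition of $Y$. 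The main obstacle is the Type (B) verification: one must check that non-adjacent 2-clouds may legitimately be grouped on the same side of the newly introduced $\mathbin{||}$ --- for example, that $1 | 3 \mathbin{||} 2$ is a codimension-one face of $c(1 | 2 | 3)$. This follows directly from the exit-poset rule, and combined with $\partial(Y)$ from Type (A) yields the identity $D_0(Y) = Y \mathbin{||} Y + \partial(Y)$.
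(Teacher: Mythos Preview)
The paper states this proposition without proof; it is offered as a reformulation of the known cellular boundary of $\NBZ_3(n)$ in the Fox-polynomial language, and the verification is left implicit. Your argument is therefore not being compared to a written proof but rather filling in what the authors considered routine.

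Your outline is correct. The uniqueness argument is immediate from the $1$-cloud decomposition, and the Leibniz rule follows as you say: if $z\in|Z|$ and $w\in|W|$ then $z<_0 w$ in $\Gamma$, and the face-poset rule forces $z<_0 w$ in every $\Gamma'\le\Gamma$, so labels never cross the separator and the boundary splits. For the $1$-cloud identity, the cleanest justification of your Type~(A)/Type~(B) dichotomy is the tree picture: a $1$-cloud $Y=Y_1\,|\,\cdots\,|\,Y_r$ has exactly one height-$1$ branch point (whose children are the $2$-clouds $Y_j$) and several height-$2$ branch points (one per $Y_j$). A codimension-one face is obtained by choosing a single branch point, splitting its children into two non-empty ordered groups, and lowering the branch by one. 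Doing this at height $2$ gives precisely $\partial(Y)$; doing it at height $1$ gives precisely the ordered bipartitions of $\{Y_1,\dots,Y_r\}$, which is the $\Sat$-condition in the $Y$-evaluation and hence the expansion of $Y\Vert Y$.

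One point worth tightening: you argue carefully that each term of $Y\Vert Y$ \emph{is} a codimension-one face (your ``main obstacle''), but you should also note the converse, that every Type~(B) codimension-one face keeps each $2$-cloud intact. This is immediate from the tree description above, or from a dimension count: any $\Gamma'$ with a single $\Vert$ and $\dim c(\Gamma')=\dim c(Y)-1$ must have the same number of height-$2$ edges as $Y$, which forces its $2$-clouds to coincide with those of $Y$. With that remark your argument is complete.
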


 We proceed with $D_1$.
\begin{definition}[First Differential] \label{d1-fn} There is a linear map 
$$D_1 : \MFN_3(n)_{\bullet} \to \MFN_3(n+1)_{\bullet}$$
given as a sum
$$ D_1 = D_1^0 + \left ( \sum_{i=1}^n D_1^{i} \right ) +D_1^{n+1}$$
such that for all $\Gamma$
$$ D_1^0(\Gamma) = 1 || \Delta_0 \Gamma \ ,$$
$$ D_1^{n+1}(\Gamma) = \Delta_{n+1}\Gamma || (n+1) = \Gamma || (n+1) \ ,$$
and for all $\bar{Z}_1, \bar{Z}_2$ numerical 0-clouds, $\bar{Y}_1, \bar{Y}_2$ numerical 1-clouds, $\bar{X}_1, \bar{X}_2$ numerical 2-clouds:
$$ D_1^i(\bar{Z}_1||\bar{Y}_1|\bar{X}_1i\bar{X}_2|\bar{Y}_2||\bar{Z}_2) =\hat{D}_1^i(\Delta_i \bar{X}, \Delta_i \bar{Y}, \Delta_i \bar{Z}) \ ,$$
where $\hat{D}_1^i$ is the polynomial
$$ \hat{D}_1^i := Z_1||Y_1 | X_1 i X_2 | Y_2 || Y_1 | X_1 (i+1) X_2 | Y_2 || Z_2  \ .$$
\end{definition}
The definition can seem quite complicated, but it is easy to compute in practice. Let us give an example.
\begin{example} Given $\Gamma = 1|23||7|548||9$, we want to compute $D_1^4$. We can uniquely locate which $\bar{Z}_1, \bar{Z}_2$,$\bar{Y}_1, \bar{Y}_2$, $\bar{X}_1, \bar{X}_2$ to use:
\begin{itemize}
    \item $\bar{Z}$'s are the parts separated by a double bar from $i=4$, that is $L_1 = 1|23, L_2 = 9.$
\item $\bar{Y}$'s are separated by a bar, but not a double bar from $4$, hence $Y_1 = 7$ and $Y_2 = \emptyset$.
\item $\bar{X}$'s are not separated from $4$: $X_1 = 5$ and $X_2 = 8$.
\end{itemize}
Then we have
\begin{align*}
D_1^4(\Gamma) & = \Delta_4(1|23) || \ev_{(\Delta_45,\Delta_48),(\Delta_47, \emptyset)}(Y_1|X_14X_2|Y_2||Y_1|X_15X_2|Y_2) || \Delta_4(9) =\\
& = 1|23 || \ev_{(6,9),(8,\emptyset)}(Y_1|X_14X_2|Y_2||Y_1|X_15X_2|Y_2) || 10 \ .
\end{align*}
Each of the variables appear twice. We have one possibility of distribution for $Y_2=\emptyset$ and two for the other three variables: 8 in total. Thus, using bilinearity of double bars:
\begin{align*}
& = 1|23 || 8|649|\emptyset_Y || \emptyset_Y| \emptyset_X 5 \emptyset_X | \emptyset_Y || 10 
+ 1|23 || \emptyset_Y|649|\emptyset_Y || 8| \emptyset_X 5 \emptyset_X | \emptyset_Y || 10 +\\
& +1|23 || 8|\emptyset_X 49|\emptyset_Y || \emptyset_Y| 6 5 \emptyset_X | \emptyset_Y || 10 
+ 1|23 || \emptyset_Y|\emptyset_X 49|\emptyset_Y || 8| 6 5 \emptyset_X | \emptyset_Y || 10 +\\
& + 1|23 || 8|64\emptyset_X |\emptyset_Y || \emptyset_Y| \emptyset_X 59 | \emptyset_Y || 10 
+ 1|23 || \emptyset_Y|64\emptyset_X |\emptyset_Y || 8| \emptyset_X 5 9| \emptyset_Y || 10 +\\
& + 1|23 || 8|\emptyset_X 4\emptyset_X |\emptyset_Y || \emptyset_Y| 6 59 | \emptyset_Y || 10 
+ 1|23 || \emptyset_Y|\emptyset_X 4\emptyset_X |\emptyset_Y || 8| 6 5 9 | \emptyset_Y || 10 =\\
\end{align*}
We can simplify the dummy symbols to obtain a neater expression:
\begin{align*}
& = 1|23 || 8|649|| 5 || 10 + 1|23 || 649|| 8|5 || 10 +1|23 || 8|49|| 6 5|| 10 + 1|23 || 49|| 8| 6 5 || 10 +\\
& + 1|23 || 8|64||59 || 10 + 1|23 ||64|| 8|  5 9 || 10 +1|23 || 8| 4||6 59 || 10 + 1|23 ||4 || 8| 6 5 9 || 10 \ .
\end{align*}
\end{example}
Let us analyze the formula of the differential before going on. We can summarise the expression for the central contributions ($i=1, \ldots, n$) as a three-step process:
\begin{itemize}
    \item Double $i$ into $i||i'$, where $i' = (i+1)$ is the new point;
    \item Relabel the other points with $d_i$;
    \item Distribute them between $i$ and $i'$ in a way that preserves the depth relation with the doubled point.
\end{itemize}
 We can say that the "core expression" of $D_1^i$ is $i||i'$; the rest of the process, on the other hand, is algorithmic. We will see how this behavior generalizes to higher differentials, but with different core expressions. Regarding the extremal contributions $D_1^0, D_1^{n+1}$, they don't have a higher analogue. Indeed, since they commute with other contributions, there will be no associated $D_2$ "to force commutativity". 

\begin{definition}[Second Differential] \label{d2-fn} There is a linear map 
$$D_2 : \MFN_3(n)_{\bullet} \to \MFN_3(n+2)_{\bullet+1}$$
given as a sum
$$ D_2 =\sum_{1 \le i < j \le n }^n D_2^{ij} $$
such that for all $\bar{Z}_1, \bar{Z}_2$ numerical 0-clouds, $\bar{Y}_1, \bar{Y}_2, \bar{Y}_3$ numerical 1-clouds, $\bar{X}_1, \bar{X}_2, \bar{X}_3, \bar{X}_4$ numerical 2-clouds, we have
$$ D_2^{ij}(\bar{Z}_1||\bar{Y}_1|\bar{X}_1i\bar{X}_2| \bar{Y}_2 | \bar{X}_3 j \bar{X}_4|\bar{Y}_3||\bar{Z}_2) = \hat{D}_2^{i|j}(\Delta_{ij} \bar{X}, \Delta_{ij} \bar{Y}, \Delta_{ij} \bar{Z} ) \ ,$$
$$ D_2^{ij}(\bar{Z}_1||\bar{Y}_1|\bar{X}_1i\bar{X}_2j \bar{X}_3|\bar{Y}_2||\bar{Z}_2) = \hat{D}_2^{ij}(\Delta_{ij} \bar{X}, \Delta_{ij} \bar{Y}, \Delta_{ij} \bar{Z} ) \ ,$$
where $\hat{D}_2^{ij}, \hat{D}_2^{i|j}$ are the polynomials
\begin{align*}
\hat{D}_2^{i|j} & := Z_1 || Y_1 | X_1 i X_2| Y_2 | X_3(j+1) X_4| Y_3 || Y_1 | X_1 (i+1) X_2 | Y_2 | X_3 (j+2) X_4| Y_3 || Z_2\\
\hat{D}_2^{ij} & := Z_1 ||Y_1 | X_1 i X_2(j+1) X_3| Y_2 || Y_1 | X_1 (i+1) X_2 X_3 | X_1X_2(j+2) X_3| Y_2 || Z_2 +\\
& +Z_1 || Y_1 | X_1  X_2(j+1) X_3|X_1 i X_2 X_3 | Y_2 || Y_1 | X_1 (i+1) X_2(j+2) X_3| Y_2 || Z_2 \ .
\end{align*}
Furthermore, $D_2^{ij}(\Gamma) = 0$ whenever $i <_0 j$ in $\Gamma$. Finally, let $\sigma = (i j), \tau = (i \ j+1)(i+1 \ j+2)$  be permutations in cycle notation. Then
$$ D_2^{ij}( \Fox_{\sigma} \Gamma) = \Fox_{\tau} D_2^{ij}(\Gamma) \ .$$
\end{definition}

\begin{remark} \label{d2-explanation}
As you can see, the overall formula is quite similar, but with a few differences. Since it depends on two indices $i,j$, we have to distinguish depending on the depth index $d(i,j) \in \{0,1,2\}$ and $i < j$ or $j < i$ with respect to the order in $\Gamma$. The symmetry condition allows one to reduce to $i < j$, and the case $d(i,j) = 0$ yields zero. The other two remaining cases have an explicit formula.

The case $i <_1 j$ has core expression $i|j||i'|j'$, which is quite similar to the first differential case: we double the two points "in parallel", relabeling conveniently. The case $i <_2 j$ instead, is quite new: it has two core expressions
$$ ij || i'|j', \ \ \ j|i || i'j' \ ,$$
with the second one presenting an unexpected inversion in the first half. We have an intuitive explanation for this. When solving the multicomplex equation
$$ D_0 D_2(ij) = D_1^2(ij) + D_2 D_0(ij) $$ 
we can choose an alternative expression for $D_2(ij)$ by adding $D_0$ of something. For example, adding 
$$ D_0(ij||i'j') = i|j||i'j'+ j|i||i'j' + ij||i'|j'+ij||j'|i'$$
we would get an alternative expression
$$  i|j || i'j' + ij || j'|i' \ ,$$
with the inversion on the second half. If we were working over $\mathbb{Q}$, the best candidate would be the average of the two candidates for $D_2$, giving a symmetric expression. Since we can't divide by $2$ in $\mathbb{F}_2$, we have to make an arbitrary choice, resulting in a non-symmetric expression. This is even more evident in the case of $D_3$:
\end{remark}
\begin{definition}[Third Differential] \label{d3-fn} There is a linear map 
$$D_3 : \MFN_3(n)_{\bullet} \to \MFN_3(n+3)_{\bullet+2}$$
given as a sum
$$ D_3 =\sum_{1 \le i < j < k \le n }^n D_3^{ijk} $$
such that for all $\bar{Z}_1, \bar{Z}_2$ numerical 0-clouds, $\bar{Y}_1, \bar{Y}_2, \bar{Y}_3, \bar{Y}_4 $ numerical 1-clouds, $\bar{X}_1, \bar{X}_2, \bar{X}_3, \bar{X}_4, \bar{X}_5, \bar{X}_6$ numerical 2-clouds, we have
$$ D_3^{ijk}(\bar{Z}_1||\bar{Y}_1|\bar{X}_1i\bar{X}_2| \bar{Y}_2 | \bar{X}_3 j \bar{X}_4|\bar{Y}_3 | \bar{X}_5 k \bar{X}_6 | \bar{Y}_4||\bar{Z}_2) = \hat{D}_3^{i|j|k}(\Delta_{ijk} \bar{X}, \Delta_{ijk} \bar{Y}, \Delta_{ijk} \bar{Z} ) \ ,$$
$$ D_3^{ijk}(\bar{Z}_1||\bar{Y}_1|\bar{X}_1i\bar{X}_2j\bar{X}_3| \bar{Y}_2 | \bar{X}_4 k \bar{X}_5|\bar{Y}_3 ||\bar{Z}_2) =\hat{D}_3^{ij|k}(\Delta_{ijk} \bar{X}, \Delta_{ijk} \bar{Y}, \Delta_{ijk} \bar{Z} ) \ ,$$
$$ D_3^{ijk}(\bar{Z}_1||\bar{Y}_1|\bar{X}_1i\bar{X}_2| \bar{Y}_2 | \bar{X}_3 j \bar{X}_4k \bar{X}_5|\bar{Y}_3 ||\bar{Z}_2) = \hat{D}_3^{i|jk}(\Delta_{ijk} \bar{X}, \Delta_{ijk} \bar{Y}, \Delta_{ijk} \bar{Z} ) \ ,$$
$$ D_3^{ijk}(\bar{Z}_1||\bar{Y}_1|\bar{X}_1i\bar{X}_2j \bar{X}_3 k \bar{X}_4| \bar{Y}_2 ||\bar{Z}_2) = \hat{D}_3^{ijk}(\Delta_{ijk} \bar{X}, \Delta_{ijk} \bar{Y}, \Delta_{ijk} \bar{Z} ) \ .$$

where $\hat{D}_3^{i|j|k}, \hat{D}_3^{ij|k}, \hat{D}_3^{i|jk}, \hat{D}_3^{ijk}$ are the polynomials
\begin{adjustwidth}{-30pt}{-30pt}
\begin{align*}
& \hspace{8cm}\hat{D}_3^{i|j|k} := \\
&Z_1 || Y_1 | X_1 i X_2| Y_2 | X_3(j+1) X_4| Y_3 | X_5 (k+2) X_6 | Y_4 ||  Y_1 | X_1 (i+1) X_2| Y_2 | X_3(j+2) X_4| Y_3 | X_5 (k+3) X_6 | Y_4 || Z_2 \\ \\
& \hspace{8cm}\hat{D}_3^{ij|k} :=\\
&Z_1 || Y_1 | X_1 i X_2(j+1) X_3| Y_2|X_4 (k+2) X_5 | Y_3 || Y_1 | X_1 (i+1) X_2 X_3 | X_1X_2(j+2) X_3| Y_2| X_4 (k+3) X_5| Y_3 || Z_2 +\\
& + Z_1||Y_1 | X_1  X_2(j+1) X_3|X_1 i X_2 X_3 | Y_2 | X_4 (k+2) X_5| Y_3 || Y_1 | X_1 (i+1) X_2(j+2) X_3| Y_2 | X_4 (k+3) X_5 | Y_3 || Z_2 \\ \\
&\hspace{8cm}\hat{D}_3^{i|jk}  := \\
&Z_1 || Y_1 | X_1 i X_2 | Y_2 | X_3(j+1)X_4 (k+2) X_5 | Y_3 || Y_1 | X_1 (i+1) X_2 | Y_2 | X_3(j+2)X_4 X_5|X_3 X_4 (k+3) X_5| Y_3 || Z_2 +\\
& + Z_1||Y_1 |X_1 i X_2 | Y_2 | X_3 X_4 (k+2) X_5| X_3(j+1) X_4 X_5 | Y_3 || Y_1 | X_1 (i+1) X_2| Y_2 |X_3(j+2) X_4 (k+3) X_5| Y_3 || Z_2 \\ \\
&\hspace{8cm}\hat{D}_3^{ijk} := \\
&Z_1 ||Y_1|X_1X_2(j+1)X_3(k+2)X_4|X_1iX_2X_3X_4| Y_2 || Y_1|X_1X_2(j+2)X_3X_4 |X_1(i+1)X_2X_3(k+3)X_4|Y_2 || Z_2 + \\
&+Z_1||Y_1|X_1X_2(j+1)X_3(k+2)X_4|X_1iX_2X_3X_4 |Y_2||Y_1|X_1(i+1)X_2(j+2)X_3X_4|X_1X_2X_3(k+3)X_4|Y_2 || Z_2 + \\
&+ Z_1||Y_1|X_1X_2(j+1)X_3X_4 | X_1iX_2X_3(k+2)X_4|Y_2||Y_1|X_1(i+1)X_2(j+2)X_3X_4|X_1X_2X_3(k+3)X_4|Y_2 || Z_2 +\\
&+ Z_1||Y_1 | X_1iX_2X_3(k+2)X_4| X_1X_2(j+1)X_3X_4 | Y_2 || Y_1 | X_1(i+1)X_2X_3X_4 |X_1X_2(j+2)X_3(k+3)X_4| Y_2 || Z_2 +\\
&+ Z_1||Y_1 | X_1X_2X_3(k+2)X_4 |X_1iX_2(j+1)X_3X_4| Y_2 || Y_1 | X_1(i+1)X_2X_3X_4 |X_1X_2(j+2)X_3(k+3)X_4|Y_2 || Z_2 + \\
&+ Z_1||Y_1 |X_1X_2X_3(k+2)X_4 |X_1iX_2(j+1)X_3X_4| Y_2 || Y_1| X_1(i+1)X_2X_3(k+3)X_4|X_1X_2(j+2)X_3X_4|Y_2 || Z_2 + \\
&+ Z_1||Y_1|X_1iX_2(j+1)X_3(k+2)X_4|Y_2 || Y_1 | X_1(i+1)X_2X_3X_4|X_1X_2(j+2)X_3X_4|X_1X_2X_3(k+3)X_4|Y_2 || Z_2 + \\
&+ Z_1||Y_1|X_1 X_2 X_3 (k+2) X_4|X_1X_2(j+1)X_3X_4|X_1iX_2X_3X_4| Y_2 || Y_1  |X_1(i+1)X_2(j+2)X_3(k+3)X_4| Y_2 || Z_2 + \\
&+ Z_1||Y_1|X_1iX_2(j+1)X_3X_4|Y_2 || Y_1 |  X_1(i+1)X_2X_3(k+2)X_4  | Y_2 || Y_1 |X_1X_2(j+2)X_3(k+3)X_4| Y_2 || Z_2 + \\
&+ Z_1||Y_1|X_1X_2(j+1)X_3(k+2)X_4|Y_2||Y_1|  X_1iX_2X_3(k+3)X_4  |Y_2|| Y_1 |X_1(i+1)X_2(j+2)X_3X_4|Y_2 || Z_2 \ .
\end{align*}
\end{adjustwidth}
Furthermore, $D_3^{ijk}(\Gamma) = 0$ whenever either $i <_0 j$ or $j <_0 k$ in $\Gamma$. Finally, let 
$$\Delta: \textrm{Aut}(\{i,j,k\}) \cong \Sigma_3 \overset{\textrm{diag}}{\to} \Sigma_3 \times \Sigma_3 \cong \textrm{Aut}(\{i,j+1,k+2\}) \times \textrm{Aut}(\{i+1,j+2,k+3\}) $$
$$ \to \textrm{Aut}(\{i,i+1,j+1,j+2,k+2,k+3\}) \ .$$
Then for all $\sigma \in \textrm{Aut}(\{i,j,k\})$:
$$ D_3^{ijk}( \Fox_{\sigma} \Gamma) = \Fox_{\Delta \sigma} D_3^{ijk}(\Gamma) \ .$$
\end{definition}

The core expressions for $i|j|k, ij|k, i|jk$ are mimicked from the lower-degree differentials. The differential for $ijk$ is instead quite peculiar, with 10 asymmetrical core expressions that we can summarise with the following picture (where $i,j,k$ correspond to $1,2,3$)

\begin{figure}[H]
	\centering
	\includegraphics[scale=0.24]{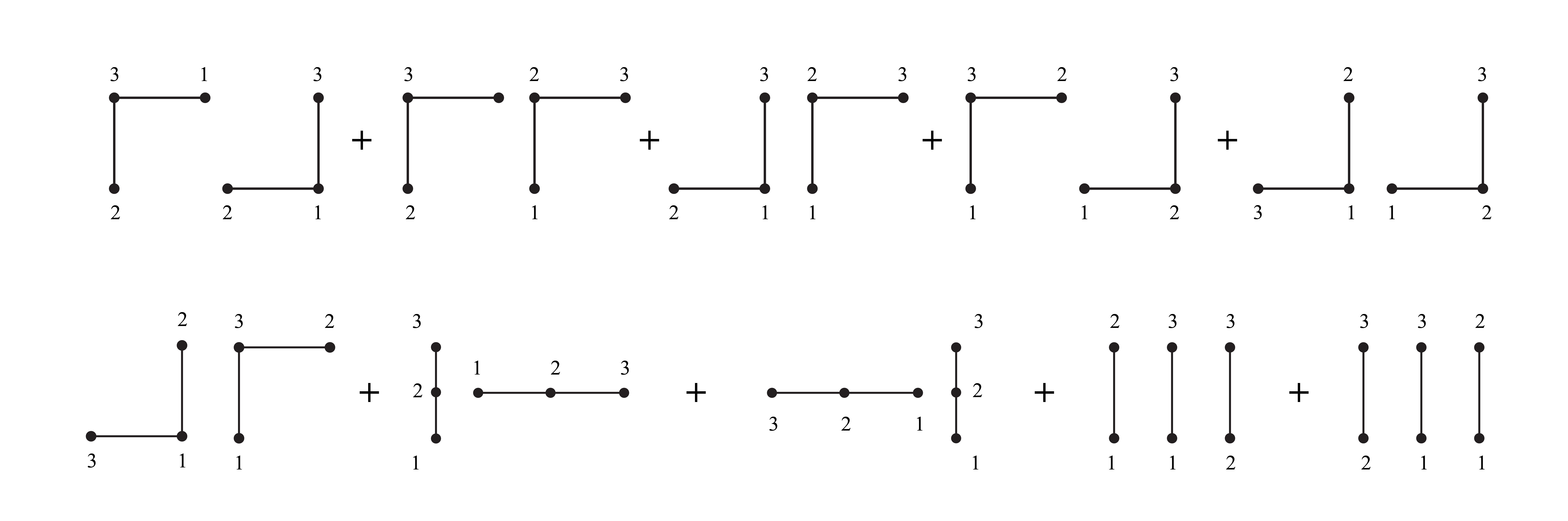}
	\caption{The core positions appearing in the differential $D_3$ for $m=3$}
	\label{core-positions}
\end{figure}

\begin{remark} \label{diff-mult-rmk} Note that the differentials of Definitions \ref{d1-fn} \ref{d2-fn} \ref{d3-fn} can be used to define
$$ D_A : \MFN_3(n)_{\bullet} \to \MFN_3(n+|A|)_{\bullet+|A|-1} $$
for any multiset $A \subset [n+1]$ such that $|A| \le 3$. For $|A| \le 1$, one can use \ref{d1-fn} together with the standard Fox-Neuwirth differential. For $|A| \ge 2$, if the indices are distinct and $A \subset \{1, \ldots, n\}$ the definitions \ref{d2-fn} \ref{d3-fn} apply directly; in all other cases, set $D_A=0$. 
\end{remark}

We now want to apply Theorem \ref{bdthm} to the differentials defined in this section. This requires the verification of the two hypothesis of Thm. \ref{bdthm}: the "sliced" multicomplex equation, and the "bound" for the differentials in terms of the semicosimplicial structure on Fox-Neuwirth Trees. In the following sections, we will prove the following two lemmas:
\begin{lemma} \label{slice-mcpx-lemma}
The maps $D_A : \MFN_3(n)_{\bullet} \to \MFN_3(n+|A|)_{\bullet+|A|-1} $  satisfy the sliced multicomplex equation, as stated in Thm. \ref{bdthm}.
\end{lemma}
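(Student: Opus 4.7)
The plan is to verify the sliced multicomplex equation
\begin{equation*}
\sum_{I \vee J = K} D_I D_J = 0
\end{equation*}
separately for every multiset $K \subset [n+1]$ with $|K| \le 3$, these being the only $K$ for which the equation has non-trivial content at our truncation level (see Remark \ref{diff-mult-rmk}). I would stratify by $|K|$. The case $|K|=0$ is the classical identity $D_0^2=0$ on the cellular chain complex of $\NBZ_3(n)$. The case $|K|=1$ reduces, for each singleton $\{i\}$ with $0 \le i \le n+1$, to showing $D_0 D_1^{\{i\}} + D_1^{\{i\}} D_0 = 0$, i.e.\ each piece of $D_1$ commutes with the cellular boundary. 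For the extremal cases $i \in \{0, n+1\}$ this is essentially immediate because the new point is attached by a double bar and does not interact with the $\partial$ of any 1-cloud of $\Gamma$. For $1 \le i \le n$ the identity becomes a statement about how distributing the ambient clouds commutes with splitting a 1-cloud via $\partial \bar Y$; this is already a nontrivial polynomial check. The cases $|K|=2$ and $|K|=3$ are the substantive ones.

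The key technical device is the Fox polynomial formalism of Section \ref{foxpoly}. Every differential $D_A$ acts on a tree $\Gamma$ by (i) identifying the ambient $2$-, $1$-, and $0$-clouds $\bar X, \bar Y, \bar Z$ surrounding the distinguished indices of $A$, (ii) relabelling by $\Delta_A$, and (iii) evaluating a fixed template polynomial $\hat D_A$ on these clouds. A direct unpacking shows that the evaluation maps $\ev_{\bar X \bar Y \bar Z}$ are compatible with composition, in the sense that for any $I, J$ with $I \vee J = K$ the composite $D_I D_J$ is itself the evaluation of an explicit composed Fox polynomial on the clouds of $\Gamma$, where $D_0$ contributes $Y || Y + \partial Y$ on each $1$-cloud via the Leibniz rule of the proposition preceding Definition \ref{d1-fn}. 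Consequently, the equation on an arbitrary tree follows from a universal polynomial identity among the $\hat D_A$ on the finitely many \emph{core templates} $i$, $ij$, $i|j$, $ijk$, $ij|k$, $i|jk$, $i|j|k$ (and the minimal extremal variants). This reduces the infinitely many instances to a finite verification.

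Concretely, for $|K|=2$ one checks the identities $D_0 \hat D_2^{\alpha} + \hat D_2^{\alpha} D_0 + \hat D_1 \hat D_1 = 0$ on the core trees $ij$ and $i|j$, where the summands of $\hat D_2^{ij}$ are the two asymmetrical templates of Definition \ref{d2-fn}. For $|K|=3$ the analogous identity must be verified on each of the four core trees $i|j|k$, $ij|k$, $i|jk$, $ijk$, using the four $\hat D_3$ templates listed in Definition \ref{d3-fn}. The $\Fox_\sigma$-equivariance built into Definitions \ref{d2-fn} and \ref{d3-fn} allows one to group the verifications into $\Sigma_3$-orbits and cuts the number of independent checks significantly.

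The main obstacle, and the reason the authors delegate the final certification to a computer-assisted check (see \cite{multicpx-eqn}), is the combinatorial size of the $|K|=3$ verification on the template $ijk$: the polynomial $\hat D_3^{ijk}$ alone already has ten asymmetric summands (cf.\ Figure \ref{core-positions}), each involving six displaced labels and several cloud variables, and one must check that all the contributions of $D_0 D_3 + D_3 D_0 + D_1 D_2 + D_2 D_1$ cancel exactly over $\F_2$. A human-readable derivation is essentially out of reach at this scale, so the pragmatic approach is a machine check of the finitely many reduced polynomial identities, with the Fox polynomial distribution mechanism transporting them to general trees.
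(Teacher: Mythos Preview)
Your overall strategy matches the paper's Section~\ref{strategy} almost exactly: slice by the multiset $K$, treat $|K|=0,1$ as essentially immediate, and reduce $|K|=2,3$ to a finite list of Fox-polynomial identities certified by computer. The paper also gives the worked example of composing two differentials in the Fox-polynomial formalism that justifies your ``universal polynomial identity'' reduction.

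There are two places where the paper is more careful than your sketch and which you should not gloss over. First, for each $K$ the paper explicitly enumerates the pairs $(I,J)$ with $I\vee J=K$, tracking the cosimplicial index shift built into $\vee$: for $K=\{i,j\}$ with $i<j$ one gets the four terms $D_0 D_2^{ij} + D_1^i D_1^j + D_1^{j+1} D_1^i + D_2^{ij} D_0$, and for $|K|=3$ there are eight such terms with shifted indices. Your notation ``$\hat D_1\hat D_1$'' hides this bookkeeping, which is where most of the actual work lies. Second, and more substantively, your list of core templates covers only multisets $K$ with \emph{distinct} entries. The paper separately handles the degenerate multisets with repeated indices: for $|K|=2$ the case $K=\{i,i\}$ gives $D_1^{i+1}D_1^i + D_1^i D_1^i = 0$, and for $|K|=3$ there are the cases $\{i,i,k\}$ and $\{i,j,j\}$, each split further according to the depth relation. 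These do not reduce to your template identities (since $D_2,D_3$ vanish on repeated indices, leaving only $D_1$-compositions to cancel) and must be checked on their own. With these two refinements incorporated, your plan coincides with the paper's.
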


\begin{lemma} \label{differential-bounds} For all sets $A\subset [n+1]$ such that with $|A| \le 3$ and $\Gamma \in \FNP_m(n)$, we have $D_A(\Gamma) \in \mu_I(\Gamma)$.
\end{lemma}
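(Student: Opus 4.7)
The plan is to unwind the poset order combinatorially and verify the bound term by term in the explicit formulas for $D_A$. Recall from Section \ref{configurations} that $\Lambda \le d_A\Gamma$ in $\FNP_3(n+|A|)$ means that for every ordered pair $\alpha \ne \beta$, if $\alpha <_r \beta$ in $d_A\Gamma$, then in $\Lambda$ either $\alpha <_s \beta$ with $s \le r$, or $\alpha >_s \beta$ with $s < r$. So the task reduces to checking, for every summand $\Lambda$ of $D_A(\Gamma)$, that no pair has its depth strictly increased and that no order flip happens at the maximal depth attained by that pair in $d_A\Gamma$.

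For $|A|=0$ the differential $D_\emptyset = D_0$ is the cellular boundary of $\NBZ_3(n)$, so every summand is by definition a proper face of $c(\Gamma)$ and is automatically $\le \Gamma$. For $|A|=1$ the extremal pieces are immediate: $d_0$ and $d_{n+1}$ attach the new point at depth $m-1=2$, whereas $D_1^0$ and $D_1^{n+1}$ attach it via $||$ at depth $0$, strictly decreasing every depth with the new point and flipping no order. For an internal $D_1^i$, I would compare $d_i\Gamma$ (which doubles $i$ vertically at depth $2$) with each summand of $\hat D_1^i$; the new sibling pair $(i,i{+}1)$ sits at depth $0$ instead of $2$, and the Fox-polynomial distribution of $\bar X_1,\bar X_2,\bar Y_1,\bar Y_2$ across the two halves of $||$ keeps every other symbol at a depth with $i$ (and with $i{+}1$) that is no larger than its original depth with $i$ in $\Gamma$. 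This is precisely what the rules defining $\res^X$ and $\res^Y$ enforce: a $2$-cloud symbol cannot migrate into a $1$-cloud position, nor a $1$-cloud symbol into a $0$-cloud position.

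For $|A|=2,3$ the same scheme applies, but one must handle several core expressions per differential. For each I would go pair by pair, comparing the depths in $d_A\Gamma$ (new siblings at depth $2$; every other pair keeping the depth of its $\Gamma$-preimage under the appropriate $\Delta$-shift) with those read off the bar notation of the core expression. The subtle cases are the flipped cores appearing in $\hat D_2^{ij}$, $\hat D_3^{ij|k}$, $\hat D_3^{i|jk}$ and the ten cores of $\hat D_3^{ijk}$ displayed in Figure \ref{core-positions}; in each such flip the pair in question remains within a common $1$-cloud or across a $||$, so the flipped depth is at most $1 < 2$, matching the poset condition $s < r$. The upper-differential $\partial$ entering the derived variables $\partial Y_j$ only splits an existing $1$-cloud into two pieces, which likewise only decreases depths.

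The main technical hurdle is the bookkeeping for $\hat D_3^{ijk}$: one has to handle all ten asymmetric cores simultaneously and confirm that each of the six new labels $i,i{+}1,j{+}1,j{+}2,k{+}2,k{+}3$ occurs only in configurations compatible with the $d_A$-ordering modulo low-depth flips. Once assembled in a single pair-by-pair table, Lemma \ref{differential-bounds} follows by matching Definitions \ref{d1-fn}--\ref{d3-fn} against that table. Since the verification is finite and mechanical, it can be entrusted to the same companion verification file \cite{multicpx-eqn} as the sliced multicomplex equation.
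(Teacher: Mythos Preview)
Your approach is correct in principle but differs from the paper's. You propose to verify $\Lambda \le d_A\Gamma$ by a direct pair-by-pair comparison of depth indices, relying on the explicit poset criterion from Section~\ref{configurations}. The paper instead introduces an order relation on Fox \emph{polynomials} and three transfer lemmas (High Split, Low Split, Juxtaposition) which encapsulate exactly the pairwise checks you describe: the split lemmas package the observation that distributing a cloud across two halves separated by $|$ or $||$ can only lower depths, and the Juxtaposition lemma lets one compose such inequalities by attaching surrounding material. With these in hand, the paper reduces each differential bound to a short chain of polynomial inequalities rather than a full table of pair comparisons.

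What each buys: your direct route avoids setting up the auxiliary order on polynomials and the three lemmas, at the cost of having to repeat essentially the same pairwise argument for every core expression and every combination of cloud types (points from the same $X_i$, from different $X$'s, from a $Y_j$, etc.); this is exactly the casework buried inside the proofs of the split and juxtaposition lemmas. The paper's route front-loads that casework once, then handles the ten asymmetric cores of $\hat D_3^{ijk}$ by writing each as a composite of one high split, one high split, and one low split, which is considerably shorter than a full tabulation. Two small remarks on your write-up: the derived variables $\partial Y_j$ do not actually occur in the formulas for $\hat D_1,\hat D_2,\hat D_3$, so that clause is unnecessary; and the companion file \cite{multicpx-eqn} in the paper verifies only the sliced multicomplex equations of Lemma~\ref{slice-mcpx-lemma}, not the present bound, so appealing to it here would require adding new code rather than reusing existing verification.
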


Applying Theorem \ref{bdthm} to $X_n = \NBZ_m(n)$ and $\mathcal{P}_n = \FNP_m(n)$, equipped with the semicosimplicial structure given in \ref{fn-cosimplicial}, we get one of the main result of the paper:

\begin{theorem} \label{sinha-mcpx} The first three pages of Sinha Spectral Sequence in homology for ambient dimension $m=3$ and $\mathbb{F}_2$ coefficients are isomorphic to the $3$-truncated spectral sequence associated to $\MFN_3$.
\end{theorem}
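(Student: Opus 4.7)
The plan is to invoke the Barycentric Deformation Theorem (Theorem \ref{bdthm}) with the regular CW complexes $X_n = \NBZ_3(n)$, face posets $\PP_n = \FNP_3(n)$ equipped with the cosimplicial structure of Definition \ref{fn-cosimplicial}, truncation index $r = 3$, and higher operators $D_A$ defined via the Fox polynomial formulas as in Remark \ref{diff-mult-rmk}. With these choices, $C_n(\bullet) = \MFN_3(n)_\bullet$ and $K_n(\bullet) = \FNC_3(n)$. If the two hypotheses of Theorem \ref{bdthm} are verified, the theorem produces a map of $3$-truncated multicomplexes $\MFN_3 \to \tau_3 \FNC_3$ whose zeroth component is the classical subdivision map, hence a row-wise quasi-isomorphism, and therefore an isomorphism $E^s(\MFN_3) \cong E^s(\tau_3 \FNC_3)$ of truncated spectral sequences for all $1 \le s \le 3$. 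Composing with Theorem \ref{barycentric-spseq}, which identifies the spectral sequence of $\FNC_3$ with the Sinha spectral sequence from the first page on — and which restricts to the truncated level by Lemma \ref{truncation} — would yield the desired claim.

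The content of the proof therefore reduces to Lemmas \ref{slice-mcpx-lemma} and \ref{differential-bounds}. For Lemma \ref{differential-bounds}, which asks that $D_A(\Gamma) \in \mu_A(\Gamma) = \mathbb{F}_2\langle \Lambda \le d_A \Gamma\rangle$, I would proceed by inspection of the core expressions $\hat D_k^{\bullet}$: after the relabeling $\Delta_A$, each monomial in $\hat D_k^{\bullet}$ presents a tree obtained from $d_A\Gamma$ by possibly converting some $<_r$ relations into $<_s$ with $s \le r$ (or reversing orientations), which is exactly the condition characterizing cells below $d_A\Gamma$ in the reverse exit poset recalled at the end of Section \ref{configurations}. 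The $X, Y, Z$ cloud redistributions are designed so that this condition is preserved, because every choice of partition places points either with the original copy of a split index or with its clone, both of which sit under $d_A\Gamma$.

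The main obstacle is Lemma \ref{slice-mcpx-lemma}, the sliced multicomplex identity $\sum_{I \vee J = K} D_I D_J = 0$. For $|K| \le 1$ the identity boils down to $D_0^2 = 0$ and to $D_0 D_1 + D_1 D_0 = 0$, which I would check against the cellular boundary formula of \cite{Giusti} and the explicit $\hat D_1^i$ of Definition \ref{d1-fn}. For $|K| = 2, 3$ the Fox polynomial calculus reduces the identity, for each fixed mutual depth relation between the distinguished indices, to a finite verification on the core expressions; the behaviour on the distributed $X, Y, Z$ clouds follows automatically from the bilinearity of bars and from the commutation of evaluations in distinct variables. Even after this reduction, the bookkeeping is heavy — the ten-term $\hat D_3^{ijk}$ paired against $\hat D_2, \hat D_1, \hat D_0$ in every admissible depth configuration produces a formal but lengthy case analysis — so the full check is best offloaded to the computer algebra verification available at \cite{multicpx-eqn}.
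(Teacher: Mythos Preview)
Your approach is correct and mirrors the paper's exactly: apply the Barycentric Deformation Theorem \ref{bdthm} with $X_n=\NBZ_3(n)$, $\PP_n=\FNP_3(n)$, and the $D_A$ of Remark \ref{diff-mult-rmk}, reduce the hypotheses to Lemmas \ref{slice-mcpx-lemma} and \ref{differential-bounds}, and then chain the resulting isomorphism with Theorem \ref{barycentric-spseq} (via Lemma \ref{truncation}). The only substantive divergence is in your sketch of Lemma \ref{differential-bounds}: rather than an ad hoc inspection of depth relations, the paper sets up a formal notion of inequality between Fox polynomials and proves it is preserved under High Split, Low Split, and Juxtaposition operations, which is what makes the bound $D_A(\Gamma)\in\mu_A(\Gamma)$ tractable for the ten-term $\hat D_3^{ijk}$.
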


\subsection{The strategy for the multicomplex equation} \label{strategy}
The multicomplex equations up to degree $3$ are four:
$$ D_0^2 = 0 \ ,$$
$$D_1 D_0 = D_0 D_1 \ ,$$
$$ D_2 D_0 + D_1 D_1 + D_0 D_2 = 0 \ ,$$
$$ D_3 D_0 + D_2 D_1 + D_1 D_2 + D_0 D_3 = 0 \ .$$
The first equation is already verified, since $\MFN_3(n)$ is the cellular chain complex of $\NBZ_3(n)$. Regarding the second one, we will "slice" the equation into $(n+2)$ sub-equations
$$ D_1^{i} D_0 = D_0 D_1^i \ ,$$
since $D_1 = \sum_{i=0}^{n+1}D_1^i$. Notice that the sub-equations correspond to the different relabelings $\Delta_0, \ldots, \Delta_{n+1}$ we make on the clouds. This gives us an idea on how to slice higher dimensional equations. There are four ways to obtain a $\Delta_{ij}$ relabeling with two differentials ($i < j$):
$$ (\textrm{nothing}, \Delta_{ij} ), \ \ (\Delta_i, \Delta_j), \ \ (\Delta_{j+1}, \Delta_i), \ \ (\Delta_{ij}, \textrm{nothing}) \ .$$
We introduced a precise way to perform successive relabelings in definition \ref{union-with-shift}, but they can be done by hand at this level. The corresponding terms are
$$ D_0 D_2^{ij} + D_1^i D_1^j + D_1^{j+1} D_1^i + D_2^{ij} D_0 = 0$$
Since we have two indices, we also have to distinguish whether $i <_2 j$ or $i <_1 j$; there is no need to analyze the case $j <_{\Gamma} i$ because of the equivariance property. We analyze separately the extremal case $i=0$ or $j=n+1$, because they have a different $D_1$ definition. However, these are simpler, because $D_2$ vanishes when an index is extremal. The corresponding equations are 
$$ D_1^0 D_1^j + D_1^{j+1}D_1^0 = 0.  $$
$$ D_1^0 D_1^{n+1} + D_1^{n+1}D_1^0 = 0. $$
$$ D_1^i D_1^{n+1} + D_1^{n+1} D_1^i = 0$$
 There is another degenerate case that must be considered: the case in which $i=j$. There is no obstruction to doubling the same point twice. There are only two ways of obtaining $\Delta_{ii}$, since $D_2$ cannot have repeated indices:
$$ (\Delta_{i+1}, \Delta_i), \ \ \ (\Delta_i, \Delta_i)$$
Corresponding to the equation
$$ D_1^{i+1} D_1^i + D_1^i D_1^i = 0$$
The next equation is long to verify. It has been carried out with the assistance of a computer\footnote{That is, the program asked for core expressions which we manually typed in,
 and it would complete with $X$'s and $Y$'s, as well as doing some basic automatic manipulations on strings that substantially sped up the verification. 
 The result was printed and then checked to be zero modulo $2$ with a basic sort-and-count algorithm.} but without an automatic program, since the differentials were uncomfortable
  to implement symbolically. There are $8$ possible ways of obtaining the relabeling $\Delta_{ijk}$ in the general case $i<j<k$:
$$ (\textrm{nothing}, \Delta_{ijk}), \ \ \ (\Delta_i, \Delta_{jk}), \ \ \ (\Delta_{j+1}, \Delta_{ik}), \ \ \ (\Delta_{k+2}, \Delta_{ij} ) $$
$$ (\Delta_{ij}, \Delta_k), \ \ \ (\Delta_{i,k+1}, \Delta_j), \ \ \ (\Delta_{j+1,k+1}, \Delta_i), \ \ \ (\Delta_{ijk}, \textrm{nothing}) $$
Corresponding to the equation
$$ D_0 D_3^{ijk} + D_1^i D_2^{jk} + D_1^{j+1} D_2^{ik}+ D_1^{k+2} D_2^{ij} + D_2^{ij} D_1^k + D_2^{i,k+1} D_1^j + D_2^{j+1,k+1} D_1^i + D_3^{ijk} D_0 = 0$$
Since we have three terms, we have to distinguish:
\begin{itemize}
    \item $i <_2 j <_2 k$ (aligned);
    \item $i <_2 j <_1 k$ (left mixed);
    \item $ i <_1 j <_2 k$ (right mixed);
    \item $i <_1 j <_1 k$ (planar)
\end{itemize}
We analyze the extremal cases separately, obtaining (for both $j <_1 k$ and $j<_2k$):
$$ D_1^0 D_2^{jk} +D_2^{j+1,k+1} D_1^0 = 0$$
$$ D_1^{n+3} D_2^{ij} +D_2^{ij} D_1^{n+1}= 0$$
The degenerate cases in which two indexes are equal correspond to ($i=j$, with $i<_1 k$ or $i <_2 k$):
$$ D_1^i D_2^{ik} + D_1^{i+1} D_2^{ik} + D_2^{i,k+1} D_1^i + D_2^{i+1,k+1} D_1^i = 0$$ 
and ($j=k$, with $i<_1 j$ or $i <_2 j$):
$$ D_1^{j+1} D_2^{ij}+ D_1^{j+2} D_2^{ij} + D_2^{ij} D_1^j + D_2^{i,j+1} D_1^j  = 0$$
The computations of the $3+6+12 = 21$ different cases can be found on github \cite{multicpx-eqn}. We give here an example of calculation, in order to show how one applies two differentials in succession.

\begin{example} Let us compute the term $D_1^i D_1^j(\Gamma)$ in case $i<_1 j$. Firstly, we have to put $\Gamma$ in "monomial form", knowing that the doubled points will be $i,j$. There exist numerical clouds $X_1, X_2, X_3, X_4, Y_1, Y_2, Y_3, Z_1, Z_2$ such that 
$$\Gamma = Z_1 || Y_1 | X_1 i X_2 | Y_2 | X_3 j X_4 | Y_3 || Z_2$$
Since $Z_1, Z_2$ always stay in the same position, we omit them from the calculation and suppose 
$$ \Gamma' = Y_1 | X_1 i X_2 | Y_2 | X_3 j X_4 | Y_3 $$
Define $Y_4 = Y_1 | X_1 i X_2 | Y_2$.  Thus 
$$ D_1^j (\Gamma') = \hat{D}_1^j(  Y_4 | X_3 j X_4 | Y_3 ) = Y_4 | X_3 j X_4 | Y_3 || Y_4 | X_3 j X_4 | Y_3 $$
We have to distribute the 2-clouds of $Y_4$ in two. The clouds contained in $Y_1, Y_2$ will automatically distribute if we repeat the variable; however, $X_1i X_2$ has to be distributed manually. We then obtain two terms:
$$ Y_1 | X_1i X_2 | Y_2 | X_3 j X_4 | Y_3 || Y_1| Y_2| X_3 j X_4 | Y_3+ Y_1|Y_2 | X_3 j X_4 | Y_3 || Y_1|X_1i X_2| Y_2 | X_3 j X_4 | Y_3 $$
Before applying $D_i$, we have to reformulate the two summands in terms of $i$. Define $Y_5 = Y_2 | X_3 j X_4 | Y_3 $ and $Z_3 = Y_1| Y_2| X_3 j X_4 | Y_3 $. We have
$$ D_1^i D_1^j (\Gamma') = D_i(Y_1 | X_1i X_2 | Y_5 || Z_3+ Z_3 || Y_1|X_1i X_2| Y_5) = $$
$$ = Y_1 | X_1i X_2 | Y_5|| Y_1 | X_1i X_2 | Y_5 || Z_3+ Z_3 || Y_1|X_1i X_2| Y_5 || Y_1 | X_1i X_2 | Y_5$$
The $Z$ variable is not repeated, so we can simply substitute it. Regarding $Y_5$, we apply the same above argument, getting the four terms:
$$ Y_1 | X_1i X_2 | Y_2 | X_3 j X_4 | Y_3|| Y_1 | X_1i X_2 | Y_2 |  Y_3 || Z_3+ Z_3 || Y_1|X_1i X_2| Y_2 | X_3 j X_4 | Y_3 || Y_1 | X_1i X_2 | Y_2 | Y_3+$$
$$+Y_1 | X_1i X_2 | Y_2 |  Y_3|| Y_1 | X_1i X_2 | Y_2 | X_3 j X_4 | Y_3 || Z_3+ Z_3 || Y_1|X_1i X_2| Y_2 | Y_3 || Y_1 | X_1i X_2 | Y_2 | X_3 j X_4 | Y_3$$
\end{example}

These equations are enough to prove Lemma \ref{slice-mcpx-lemma}.

\subsection{Bounds on Differential Formulas}

In order to prove the "bound" on the multicomplex differentials, needed to apply theorem \ref{bdthm} we introduce a notion of "polynomial inequality". We start with the "sum inequality":
\begin{definition} Let $\{\Gamma_i\}_{i \in I}, \{\Gamma'_j\}_{j \in J} \subset \FNP_3(n)$  two finite collections of Fox-Neuwirth trees. We say that 
$$\sum_{i \in I} \Gamma_i \le \sum_{j \in J} \Gamma'_j $$
if there exists a surjective function $\alpha: I \to J$ such that $\Gamma_i \le \Gamma'_{\alpha(i)}$.
\end{definition}
It is not hard to see that it is an order relation. The only non trivial property is antisymmetry, which follows from the cycle decomposition of a permutation on a finite set. 
\begin{definition} Let $M,N \in \Fox[X_i, Y_j, Z_k]_{IJK}$ be two Fox Neuwirth monomials. We say that $M \le N$ if for all total evaluations in $\bar{X}, \bar{Y}, \bar{Z}$ we have
$$ \ev_{\bar{X} \bar{Y} \bar{Z} } (M) \le \ev_{\bar{X} \bar{Y} \bar{Z} } (N) $$
with respect to the above sum inequality notion. Given two polynomials $P = \sum_{i \in I} M_i, Q = \sum_{j \in J} N_j$, we say that $P \le Q$ if there exists a surjective function $\alpha: I \to J$ such that $M_i \le N_{\alpha(i)}$ for all $i \in I$.
\end{definition}

\
Here is a key observation about polynomial inequalities:
\begin{remark} \label{mu-inequality} Let $M, P \in \Fox [X_i, Y_j, Z_k]_{IJK}$ such that $M$ is a monomial without repeated variables. Fix numerical clouds $\bar{X}_i, \bar{Y}_j, \bar{Z}_k$ for all $i \in I, j \in J, k \in K$. Note that $\ev_{\bar{X}\bar{Y}\bar{Z}}(M) $ is just one Fox-Neuwirth tree $\Gamma$, since there are no repeated variables. Then $P \le M$ if and only if $\ev_{\bar{X}\bar{Y}\bar{Z}}(P) \in \mu(\Gamma)$, referring to notation \ref{mu-notation}. 
\end{remark}
Since our differentials are defined as evaluations of polynomials, we will try to use successive inequalities from the differential to the monomial representing the tree $d_I \Gamma$. 
The three tools for polynomial inequalities that we will use are the following:

\begin{lemma}[High Split lemma] Let $M \in \Fox[X_i]_{I}$ be a monomial without bars, $Y$'s and $Z$'s. Consider a partition of its support $|M| = C_1 \sqcup C_2$. Then 
$$ \res_{C_1} M | \res_{C_2}M \le M $$
\end{lemma}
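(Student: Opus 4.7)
The plan is to fix an arbitrary numerical evaluation $\bar{X} = (\bar{X}_i)_{i \in I}$ and exhibit a surjection between the index sets of summands of $\ev_{\bar{X}}(\res_{C_1}M\,|\,\res_{C_2}M)$ and of $\ev_{\bar{X}}(M)$ that realizes the sum-inequality term by term. Since $M$ has no bars, $Y$'s or $Z$'s, $\res_{C_j}M$ is just $M$ with the constants outside $C_j$ deleted, so each variable $X_i$ occurring $d_i$ times in $M$ still occurs $d_i$ times in each of $\res_{C_1}M$ and $\res_{C_2}M$, and hence $2d_i$ times in the 1-cloud $\res_{C_1}M\,|\,\res_{C_2}M$. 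Evaluating at $\bar{X}_i$ then sums over ordered partitions of $|\bar{X}_i|$ into $d_i$ blocks on the right-hand side and $2d_i$ blocks on the left. Writing these as $V^{(i)}=(V^{(i)}_1,\dots,V^{(i)}_{d_i})$ and $U^{(i)}=(U^{(i)}_1,\dots,U^{(i)}_{2d_i})$ respectively, I would introduce the merging map $\phi(U)^{(i)}_k := U^{(i)}_k \sqcup U^{(i)}_{k+d_i}$, which is visibly surjective: any $V$ is attained for instance by $U^{(i)}_k := V^{(i)}_k$, $U^{(i)}_{k+d_i}:=\emptyset$.

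Next I would verify the per-summand inequality. For a fixed $U$, the LHS summand $T_U$ is a Fox-Neuwirth tree of the form $P_1\,|\,P_2$, where $P_1$ is the 2-cloud listing the constants of $C_1$ together with $\res^X_{U^{(i)}_k}(\bar{X}_i)$ for $k\le d_i$ in the order inherited from $M$, and $P_2$ does the same with $C_2$ and the blocks $U^{(i)}_{k+d_i}$. The corresponding RHS summand $T_{\phi(U)}$ lists exactly the same symbols in the same order but with no central bar. Consequently $T_U$ and $T_{\phi(U)}$ share the same underlying permutation and all internal depth indices, differing only at the position of the central bar, where $T_U$ has depth $1$ and $T_{\phi(U)}$ has depth $2$. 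The face-poset criterion recalled in Section~\ref{configurations} then yields $\Conf(T_{\phi(U)})\subset \overline{\Conf(T_U)}$, which is precisely $T_U \le T_{\phi(U)}$ in the reverse exit poset $\FNP_3(n)$.

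The remaining subtlety is the bookkeeping of the dummy symbols $\emptyset_X$ that arise when some block $U^{(i)}_k$ is empty. Because the host monomials $\res_{C_j}M$ carry no internal bars, every $\emptyset_X$ appearing strictly inside $P_1$ or $P_2$ is flanked on at least one side by zero bars and simplifies cleanly; the same is true of dummies adjacent to the central bar, since their opposite side again has no bar. The only way a summand can vanish is if a whole half $P_1$ or $P_2$ collapses to the empty word, producing an unreadable boundary bar; such $U$'s are simply dropped from the index set on the left, and the preferred lift $U^{(i)}_k := V^{(i)}_k$, $U^{(i)}_{k+d_i}:=\emptyset$ shows that every nonzero RHS summand still has a nonzero preimage under $\phi$. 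I expect this last bookkeeping of boundary dummies to be the main technical obstacle; once it is in place, the geometric picture---$T_U$ is obtained from $T_{\phi(U)}$ by forgetting a single shared coordinate between its two halves---makes the face-poset comparison transparent.
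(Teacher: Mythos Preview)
Your overall strategy coincides with the paper's: define the merging map $\phi$ on ordered partitions and check surjectivity by the lift $U^{(i)}_{k+d_i}=\emptyset$. The gap is in your verification of the per-summand inequality. The claim that $T_U$ and $T_{\phi(U)}$ ``share the same underlying permutation and all internal depth indices, differing only at the position of the central bar'' is false. A minimal counterexample: take $M=X_1$, $C_1=C_2=\emptyset$, $\bar{X}_1=12$. The partition $U=(\{2\},\{1\})$ gives $T_U=2|1$, while $T_{\phi(U)}=12$; these have opposite permutations. More generally, in $T_U$ every label landing in $P_1$ precedes every label landing in $P_2$, whereas in $T_{\phi(U)}$ the labels from $U^{(i)}_k$ and $U^{(i)}_{k+d_i}$ are interleaved at the $k$-th occurrence of $X_i$ according to the internal order of $\bar X_i$, and constants from $C_1,C_2$ are interleaved according to their order in $M$. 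So neither the permutation nor the internal depth indices need agree.

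What actually survives is only the pairwise depth comparison required by the face-poset criterion: for any two labels $x,y$, if $x<_2 y$ in $T_{\phi(U)}$ one checks that in $T_U$ either $x<_s y$ with $s\le 2$ or $x>_s y$ with $s<2$. This is exactly the four-case analysis the paper carries out (both labels from the same $\bar X_i$; from different $\bar X_i,\bar X_j$; both from $|M|$; one from each). Your ``same permutation, one bar changed'' shortcut bypasses this and does not go through; replace it with the pairwise check and the argument is complete. Your dummy-symbol bookkeeping is a reasonable side remark, but note that the surjectivity witness $U^{(i)}_{k+d_i}=\emptyset$ can itself vanish when $C_2=\emptyset$; in that case use any split that leaves both halves nonempty.
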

\begin{proof} Write $M = \tilde{M}_0 A^1 \ldots A^d \tilde{M}_d$ with $\tilde{M}_i$ being a (possibly empty) monomial with only $X$'s and $|A| = A^1 \sqcup \ldots \sqcup A^d$. Consider $A^i_{\alpha} = A^i \cap C_{\alpha} $. The thesis is equivalent to 
$$ \tilde{M}_0 A_1^1 \ldots A_1^d \tilde{M}_d | \tilde{M}_0 A_2^1 \ldots A_2^d \tilde{M}_d \le \tilde{M}_0 A^1 \ldots A^d \tilde{M}_d$$ 
Consider a tuple of numerical clouds $\bar{X} = (\bar{X}_i)_{i\in I}$ with disjoint support and $|M| \cap |\bar{X}_i|=\emptyset$. If $X_i$ appears $n$ times in $M$, it will appear $2n$ times in $\res_{C_1} M | \res_{C_2}M$. When evaluating, we want to show that the term on the left distributing $\bar{X}_i$ via the partitions 
$$\bar{X}_i=S^1_0(i) \sqcup \ldots \sqcup S^1_n(i) \sqcup S^2_0(i) \sqcup \ldots \sqcup S^2_n(i)$$
 is $\le$ as a Fox-Neuwirth Tree than the corresponding distribution 
 $$\bar{X}_i=(S^1_0(i) \sqcup S^2_0(i)) \sqcup \ldots \sqcup (S^1_n(i) \sqcup S^2_n(i))$$
  on the right. For any two numbers $x\neq y$ appearing, we have to check that the depth-order on the left is smaller than the depth-order on the right. The proof is slightly different depending on the following four cases:
\begin{itemize}
    \item $x,y$ both belong to  $|\bar{X}_i|$ for some $i$;
    \item $x,y$ come from two different variables $|\bar{X}_i|, |\bar{X}_j|$;
    \item $x,y$ both belong to $|M|$;
    \item $x$ belongs to $|M|$ and $y$ belongs to $|\bar{X}_i|$
\end{itemize}
We show the first case as an example. If $x \in S^1_p(i), y \in S^1_q(i)$ for some $p,q$, they will have the same depth order in both terms. If $x \in S^1_p(i), y \in S^2_q(i)$, the will have depth order $1$ on the left and $2$ on the right, so we don't have to check the order. Note also that the corresponding function $\alpha$ on partitions
$$ \alpha (S^1_0(i), \ldots, S^1_n(i), S^2_0(i), \ldots, S^2_n(i)  )_{i\in I} = ( S^1_0(i) \sqcup S^2_0(i), \ldots, S^1_n(i) \sqcup S^2_n(i))_{i\in I} $$
is surjective (by taking e.g. $S^2_k(i) = \emptyset$ for all $i$). The other cases are completely analogous.
\end{proof}
\begin{lemma}[Low Split lemma]. Let $M \in \Fox[X_i, Y_j]_{IJ}$ be a monomial without double bars, $Z$'s and $\partial Y$'s. Consider a partition of its support $|M| = C_1 \sqcup C_2$. Then 
$$ \res_{C_1} M || \res_{C_2}M \le M $$
\end{lemma}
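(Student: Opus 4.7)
The plan is to mirror the proof of the High Split lemma, adapted to the richer setting in which $M$ may contain single bars and $Y$-variables. First I would put $M$ into its canonical form $M = B_0 |^{a_1} B_1 |^{a_2} \cdots |^{a_d} B_d$ with $a_i \in \{0,1\}$, where each $B_i$ is either a pure $X$-block (a word of constants and $X$-variables) or an isolated $Y$-variable --- the only two possibilities compatible with the Fox monomial rules once double bars, $\partial Y$'s and $Z$'s are excluded.

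Next, given any admissible tuple of numerical clouds $\bar{X}, \bar{Y}$, I would unfold the evaluations of both sides in parallel. Each variable occurrence in $M$ produces two occurrences in $\res_{C_1} M \,||\, \res_{C_2} M$, one per side of the separating double bar. A distribution choice on the LHS is thus described by partitions $(U_1^L, \ldots, U_n^L, U_1^R, \ldots, U_n^R)$ of $|\bar{X}_i|$ (respectively, a partition of $|\bar{Y}_j|$ satisfying the $\Sat$ condition), and I would send it to the RHS distribution $(U_1^L \sqcup U_1^R, \ldots, U_n^L \sqcup U_n^R)$. This map is manifestly surjective --- take all right halves to be $\emptyset$ --- so it furnishes the surjection required by the definition of polynomial inequality. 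The $\Sat$-coarseness is preserved under the merge, because each block of $\bar{Y}_j$ that already sits inside some $U_k^\alpha$ on the LHS a fortiori sits inside $U_k^L \sqcup U_k^R$ on the RHS.

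The core combinatorial verification is then a depth comparison between the two resulting Fox-Neuwirth trees. For any two constants $x, y$ appearing in the output, I would distinguish two cases. If $x$ and $y$ end up on opposite sides of the $||$ on the LHS, their LHS depth is $0$ while the RHS depth is nonnegative, so the inequality is automatic --- this is precisely where the double bar in the statement (rather than a single bar, as in the High Split lemma) is essential. If instead $x$ and $y$ end up on the same side of $||$, they sit in corresponding positions within $\res_{C_\alpha} M$ (LHS) and $M$ (RHS) under the combined distribution. Here I would appeal to the fact that the elimination operator $e_\ell$ preserves depth between surviving elements: removing a constant flanked by bars of thickness $p$ and $q$ leaves a single bar of thickness $\max(p,q)$, which matches the min-rule value $\min(2-p, 2-q)$ governing the original depth between them. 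Hence the LHS and RHS depths agree in this case, and we reduce exactly to the case analysis (same variable; different variables; constant-and-variable; two constants) already performed in the High Split proof.

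The step I expect to demand the most care is the $Y$-variable bookkeeping, for two reasons: first, $Y$-evaluation is built out of $\Sat$-admissible partitions rather than arbitrary ones, so I must check that the combining map both lands inside $\Sat$ on the RHS and is surjective onto it; second, the dummy symbols $\emptyset_X, \emptyset_Y$ introduced by empty restrictions near the $||$ need to simplify consistently, so that a nonzero LHS contribution cannot be paired with a term that the dummy-symbol rules force to vanish on the RHS. Once these compatibility checks are in place, the polynomial inequality $\res_{C_1} M \,||\, \res_{C_2} M \leq M$ drops out from the termwise comparison of Fox-Neuwirth trees.
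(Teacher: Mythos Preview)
Your proposal is correct and follows essentially the same approach as the paper, whose entire proof reads ``The proof is the same as the High Split Lemma.'' You supply the additional bookkeeping for $Y$-variables and the $\Sat$ condition that the paper leaves implicit, which is a welcome elaboration rather than a departure.
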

The proof is the same as the High Split Lemma.

\begin{lemma}[Juxtaposition lemma]. Let $A, B,M,N \in \Fox[X_i, Y_j, Z_k]_{IJK}$ be monomials without derived variables, and such that $M,N$ do not contain $Z$ variables\footnote{This condition is not strictly necessary, but ensures that the expressions in the lemma do not contain repeated $Z$ variables.}. Suppose $M \le N$. Then
$$ A |^{p} M |^{q} B \le A |^{p} N |^q B$$
for all $p,q \in \{0,1,2\}$.
\end{lemma}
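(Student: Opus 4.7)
The plan is to unpack the polynomial inequality and reduce the statement to a pointwise claim about individual Fox-Neuwirth trees under juxtaposition. By definition of $\le$ for polynomials, one must show that for every valid total evaluation on clouds $\bar X$, $\bar Y$, $\bar Z$ there is a surjection from the terms of $\ev_{\bar X\bar Y\bar Z}(A|^p M|^q B)$ onto those of $\ev_{\bar X\bar Y\bar Z}(A|^p N|^q B)$ witnessing the sum inequality between Fox-Neuwirth trees.

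Fix such an evaluation. The key observation is that $\ev$ of a juxtaposition factors in two stages: first, for each variable, choose a splitting of its numerical cloud among the occurrences in the three blocks $A$, $M$, $B$; second, perform the independent sub-evaluations inside each block. Reorganising by this coarser first-stage partition gives
\begin{align*}
\ev_{\bar X\bar Y\bar Z}(A|^p M|^q B) &= \sum_{\text{splittings}} \ev^A(A)\,|^p\,\ev^M(M)\,|^q\,\ev^B(B), \\
\ev_{\bar X\bar Y\bar Z}(A|^p N|^q B) &= \sum_{\text{splittings}} \ev^A(A)\,|^p\,\ev^N(N)\,|^q\,\ev^B(B).
\end{align*}
Since $M$ and $N$ share the same variable alphabet, the indexing set of splittings is identical on both sides; the hypothesis that $M,N$ contain no $Z$ variables ensures that each $Z_k$ is unambiguously assigned to one block (so no illegal repetition appears in the juxtaposed monomials), and the absence of derived variables in $M,N$ keeps the $\partial Y$-bookkeeping identical. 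It therefore suffices to compare the two sums summand-by-summand: for a fixed splitting, the $A$- and $B$-evaluations agree on the two sides and only the middle part varies.

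At this point the hypothesis $M\le N$, applied to the restricted middle clouds, supplies a surjection $\alpha$ such that every term $\Gamma$ of $\ev^M(M)$ satisfies $\Gamma\le\Gamma'_{\alpha(\Gamma)}$ in $\FNP_3$. The main remaining task, and in my view the main obstacle, is the pointwise juxtaposition claim: whenever $\Gamma\le\Gamma'$ in $\FNP_3$ and $L,R$ are fixed contexts of bar multiplicities $p,q$, one has $L|^p\Gamma|^q R\le L|^p\Gamma'|^q R$. I would verify this directly from the combinatorial criterion for the reverse exit-poset order recalled in Section~\ref{configurations}: for any two labels $a,b$ in the juxtaposed tree, the depth $d(a,b)$ is computed by the min-rule from the internal depths of the block(s) containing $a,b$ together with the boundary bars $p,q$. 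When both $a,b$ lie in $L$ or both lie in $R$ the depth is unchanged, and the criterion is trivially satisfied; when at least one of them lies in the middle block, the min-rule expresses $d(a,b)$ as a minimum in which only the $\Gamma$-contribution (resp.\ $\Gamma'$-contribution) varies, so the two clauses of the criterion (strict depth decrease, or orientation flip with strictly smaller depth) transfer verbatim from $\Gamma\le\Gamma'$ to the juxtapositions. Assembling the local surjections produced by every splitting yields the global surjection required by the definition of $\le$, completing the argument.
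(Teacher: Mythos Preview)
Your proposal is correct and follows essentially the same approach as the paper's proof. The paper reduces to the one-sided case $A|^pM\le A|^pN$ and then constructs the surjection variable by variable, tracking how each cloud splits between the $A$-block and the $M$-block; you instead handle all three blocks simultaneously and phrase the final step as a general ``juxtaposition preserves the Fox--Neuwirth order'' claim, but the underlying case analysis (both labels in an outer block, both in the middle, one in each) is identical.
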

\begin{proof} It is enough to prove that $A |^p M \le A |^p N$; the monotonicity of adding a monomial on the right will be analogous. Let us fix numerical clouds $\bar{X}_i, \bar{Y}_j, \bar{Z}_k$.   In order to prove the inequality we have to describe a map from the distributions on the left to the distributions on the right. Since the choice of partitions for each variable is independent, for the sake of notational simplicity we will only describe the map $\alpha^{L_p}$ for a given variable.

Suppose $L_p$ occurs $m$ times in $M$, $n$ times in $N$ and $a$ times in $A$. Consider a valid distribution of $\bar{L}_p$ in $S^1_1 \sqcup \ldots \sqcup S^1_a \sqcup S^2_1 \sqcup \ldots \sqcup S^2_m$ that arises as a term of the evaluation of $A|M$ in $\bar{L}_p$. Set $S^1 = S^1_1 \sqcup \ldots \sqcup S^1_a$ and $S^2 = S^2_1 \sqcup \ldots S^2_m$. We will call $S_1$ the \textit{left side} of the distribution, and $S_2$ the right side. Since $M \le N$, we know that evaluating $L_p$ in $\res_{S_2} \bar{X}_i$ gives rise to a sum inequality. In particular, there exists a distribution $S^2 = T_1 \sqcup \ldots \sqcup T_n$ such that $M$ on the distribution $S^2_1 \sqcup \ldots \sqcup S^2_m$ is $\le$ than $N$ on the distribution $T_1 \sqcup \ldots \sqcup T_n $. We map the distribution $S^1_1 \sqcup \ldots S^1_a \sqcup S^2_1 \sqcup \ldots S^2_m$ on $A|M$ to the distribution $S^1_1 \sqcup \ldots \sqcup  S^1_a \sqcup T_1 \sqcup \ldots \sqcup T_n$ on $A|N$. 

In order to show the inequality between mapped partitions, consider some tuple of numerical clouds $\bar{X}, \bar{Y}, \bar{Z}$ and numbers $x,y \in |\bar{X} \bar{Y} \bar{Z}| $. If both $x,y$ are 

If $x,y$ both belong to the left side of a distribution, their depth order and index are determined by $A$ for both terms. If $x,y$ both belong to the right side of a distribution, the inequality holds by construction, as a consequence of $M\le N$. If $x$ belongs to the left side of a distribution and $y$ to a right side, their depth index is $p$ and $x < y$ for both terms. 
\end{proof}

\begin{remark} \label{constant-ineq} The combination of these three lemmas gives an efficient way of verifying inequalities for "well-behaved" monomials. Consider $A,B \in \Fox_3[X_i,Y_j,Z_k]_{IJK}$, and let $c(A), c(B)$ the numerical clouds obtained by eliminating all the variables. The process is similar to Def. \ref{elimination}, with constants substituted by variables.

Suppose that $c(A)\le c(B)$. It is possible to see that every Fox-Neuwirth inequality can be realized via a chain of high-splits, low-splits and juxtapositions. This follows from the characterization of the boundary of Fox-Neuwirth strata given in \cite{Giusti}; the dual construction is given, in our language, by splitting a 2-cloud or a 1-cloud and juxtaposing with the remaining clouds.

In case the variables in $A,B$ are arranged adequately, the chain of splits and juxtapositions that relates $c(A)$ and $c(B)$ may upgrade to an inequality of $A\le B$. This always happen for the differential formulas of the multicomplex, as the variables are arranged in a regular pattern depending on the position of indices $i,j,k$. We chose not to make precise this procedure in general, as it may hinders the comprehension of the simple inequalities involved.
\end{remark}

We are ready to show the second hypothesis of Theorem \ref{bdthm}, that is Lemma \ref{differential-bounds}.
\begin{proof} We treat each case $|I|=0,1,2,3$ separately. Notice that we can suppose $I \subset [n]$ if $|I| \neq 1$ because otherwise the differential would be zero.
\begin{description}
\item[$\boxed{|I|=0}$] This case is straightforward, since by the very definition 
$$D_0(\Gamma) = \sum_{\Gamma' \lhd \Gamma} \Gamma' \in \mu(\Gamma) $$
\item[$\boxed{|I|=1}$] Suppose $i$ is an internal index. We write $\Gamma = \bar{Z}_1 || \bar{Y}_1 | \bar{X}_1 i \bar{X}_2 | \bar{Y}_2 || \bar{Z}_2$ as usual. Then we have, for all $i$:
$$ D_1^i(\Gamma) =  \ev_{\bar{X}\bar{Y}\bar{Z}}(Z_1 || Y_1 | X_1 i X_2 | Y_2 || Y_1 | X_1 (i+1) X_2 | Y_ 2 || Z_2)$$
Applying the low split lemma to $Y_1|X_1i(i+1)X_2|Y_2$ and $C_1 = \{i\}, C_2 = \{i+1\}$, we get the polynomial inequality
$$ Y_1 | X_1 i X_2 |Y_2 || Y_1 | X_1 (i+1) X_2 | Y_2 \le Y_1|X_1i(i+1)X_2|Y_2$$
By juxtaposing $Z_1, Z_2$ we get
$$ D_i(\Gamma) = \ev_{\bar{X} \bar{Y} \bar{Z}} (Z_1 || Y_1 | X_1 i X_2 |Y_2 || Y_1 | X_1 (i+1) X_2 | Y_2 || Z_2 ) \le d_i(\Gamma)$$
Which means, by remark \ref{mu-inequality}, that $D_i(\Gamma) \in \mu_i(\Gamma)$.
In case $i=0$ or $i=n+1$, we have
$$ D_1^0(\Gamma) = 1 || \Gamma \le 1 \Gamma = d_0 \Gamma, \ \ \ D_1^{n+1}(\Gamma) = \Gamma || (n+1) \le \Gamma (n+1) = d_{n+1} \Gamma$$
which completes the proof.
\item[$\boxed{|I|=2}$] Here we have to distinguish two subcases, depending on the relation between $I= \{i,j\}$ and $\Gamma$. Without loss of generality, assume $i < j$. If $i<_0 j$ in $\Gamma$, we have $D_I( \Gamma ) =0$. If $i <_1 j$ in $\Gamma$, writing  
$$\Gamma = \bar{Z}_1 || \bar{Y}_1 | \bar{X}_1 i \bar{X}_2 | \bar{Y}_2 | \bar{X}_3 j\bar{X}_4 | \bar{Y}_4 || \bar{Z}_2$$
we have 
$$ D_2^{ij} (\Gamma) = \ev_{\bar{X}\bar{Y}\bar{Z}}(Z_1 || Y_1 | X_1 i X_2 | Y_2 | X_3 (j+1) X_4 | Y_4 || Y_1 | X_1 (i+1) X_2 | Y_2 | X_3 (j+2) X_4 | Y_4 || Z_2 )$$
The strategy above repeats unchanged. The thesis follows by the low split lemma applied to 
$$d_{ij}(\Gamma) =  \bar{Z}_1 || \bar{Y}_1 | \bar{X}_1 i(i+1) \bar{X}_2 | \bar{Y}_2 | \bar{X}_3 (j+1)(j+2)\bar{X}_4 | \bar{Y}_4 || \bar{Z}_2, \ \ \ C_1 = \{i,(j+1)\}, C_2 = \{(i+1), (j+2)\}$$
Followed by juxtaposing $Z_1, Z_2$.

Finally, if $i <_2 j$, we can write 
$$\Gamma = \bar{Z}_1 || \bar{Y}_1 | \bar{X}_1 i \bar{X}_2j \bar{X}_3 | \bar{Y}_2 || \bar{Z}_2 $$
We have
$$ D_2^{ij}(\Gamma) = \ev_{\bar{X}\bar{Y}\bar{Z}}(Z_1 || Y_1 | X_1 i X_2 (j+1) X_3 | Y_2 ||Y_1 | X_1 (i+1) X_2 X_3 | X_1 X_2 (j+2) X_3 | Y_2 || Z_2) +$$
$$+ \ev_{\bar{X}\bar{Y}\bar{Z}}(Z_1 || Y_1 | X_1 (j+1) X_2 X_3 | X_1 X_2 i X_3 | Y_2 ||Y_1 | X_1 (i+1) X_2 (j+2) X_3 | Y_2 || Z_2) $$
Applying the low split lemma to 
$$d_{ij}(\Gamma) = \bar{Y}_1 | \bar{X}_1 i(i+1) \bar{X}_2 (j+1)(j+2) \bar{X}_3 | \bar{Y}_2, \ \ C_1 = \{i,(j+1)\}, \ \ C_2 = \{(i+1), (j+2)\}$$
We get 
$$  Y_1 | X_1 iX_2(j+1) X_3 | Y_2 || Y_1 | X_1(i+1)X_2(j+2)X_3|Y_2 ) \le Y_1 | X_1 i(i+1)X_2(j+1)(j+2)X_3|Y_2$$
By the high split lemma we also have
$$ X_1 X_2(j+1) X_3 | X_1 i X_2 X_3 \le X_1iX_2(j+1)X_3 $$
Juxtaposing $Y_1$ and $Y_2 || Y_1 | X_1(i+1)X_2(j+2)X_3|Y_2$ we obtain
$$ Y_1 | X_1 (j+1) X_2 X_3 | X_1 X_2 i X_3 | Y_2 ||Y_1 | X_1 (i+1) X_2 (j+2) X_3 | Y_2 \le  $$
$$ \le Y_1 | X_1 iX_2(j+1) X_3 | Y_2 || Y_1 | X_1(i+1)X_2(j+2)X_3|Y_2  \le $$
$$ \le  Y_1 | X_1 i(i+1)X_2(j+1)(j+2)X_3|Y_2 $$
Juxtaposing $Z_1, Z_2$ we get the desired inequality for the second term of $D_2$. The first term is completely analogous, if one applies the high split lemma to the right part of the monomial.
\item[$\boxed{|I|=3}$] Without loss of generality, $i < j < k$ both as numbers and in the order of $\Gamma$. If $i<_0 j$ or $j <_0 k$ in $\Gamma$, we have $D_I( \Gamma ) =0$, so we can suppose both indices are $\ge 1$. 

In the planar case $i <_1 j <_1 k$, we have 
$$Z_1 || Y_1 | X_1 i X_2| Y_2 | X_3(j+1) X_4| Y_3 | X_5 (k+2) X_6 | Y_4 ||  Y_1 | X_1 (i+1) X_2| Y_2 | X_3(j+2) X_4| Y_3 | X_5 (k+3) X_6 | Y_4 || Z_2  $$
$$ \le  Z_1 ||Y_1|X_1i(i+1)X_2(j+1)(j+2)X_3(k+2)(k+3)X_4| Y_2   || Z_2 $$
By the low split lemma applied to $C_1 = \{i,(j+1), (k+2)\}$, plus juxtaposition of $Z_1, Z_2$.

For the left mixed case $i <_2 j <_1 k$, note that the "core positions"
\begin{align*}
&    i  (j+1)  |   (k+2)   ||   (i+1)   |   (j+2)  |   (k+3)     \\
&+    (j+1)  | i    |   (k+2)   ||   (i+1) (j+2) |   (k+3)   \ ,  
\end{align*}
resembles the $|I|=2$ case. The same chain of inequalities applied to indices $i,j$, indeed, yields the result. The right mixed case:
\begin{align*}
&    i  | (j+1)  (k+2)   ||   (i+1)  |   (j+2)   |   (k+3)     \\
&+   i  |    (k+2)  | (j+1)    ||   (i+1)  | (j+2)  (k+3)    \ .
\end{align*}
is analogous, with the procedure applied to indices $j,k$.

For the aligned case $i <_2 j <_2 k$, let us recall the heuristics outlined in Remark \ref{constant-ineq}: restrictions to the constants dictate which chain of split and juxtapositions to use. The "core positions" appearing in $D_3$ are 
\begin{align*}
& (j+1) (k+2)|i ||(j+2) |(i+1)(k+3) \\
& +(j+1) (k+2)|i ||(i+1)(j+2) | (k+3)\\
& +(j+1) |i (k+2)||(i+1)(j+2) | (k+3) \\
& +i (k+2)|(j+1) ||(i+1)|(j+2) (k+3)\\
& +(k+2)|i(j+1) ||(i+1) |(j+2) (k+3)\\
& +(k+2)|i(j+1) ||(i+1) (k+3)|(j+2) \\
& +i(j+1) (k+2)||(i+1) |(j+2) | (k+3)\\
& + (k+2)|(j+1) | i ||(i+1)(j+2) (k+3)\\
& +i(j+1) ||(i+1) (k+2)||(j+2) (k+3)\\
& +(j+1) (k+2)||i (k+3)||(i+1)(j+2) \ .
\end{align*}
All these terms are $\le$ than $C:= i(i+1)(j+1)(j+2)(k+2)(k+3)$. Indeed, the two copies of an index always have depth index $0$ in the core positions and $2$ in $C$. Different indices, when appearing in the same $2$-cloud, have the same $i<j<k$ order than in $C$. This is enough. Let us analyze the inequality of associated monomials for the first term. The chain of splits and juxtapositions relating the first term to $C$ is:
\begin{align*}
& (j+1)(k+2)|i||(j+2)|(i+1)(k+3)  \\
& \le i(j+1)(k+2)||(j+2)|(i+1)(k+3) \\
& \le i(j+1)(k+2)||(i+1)(j+2)(k+3) \\
&  \le i(i+1)(j+1)(j+2)(k+2)(k+3)  
\end{align*}
The corresponding chain for monomials is given by:
\begin{align*}
& Y_1|X_1X_2(j+1)X_3(k+2)X_4|X_1iX_2X_3X_4| Y_2 || Y_1|X_1X_2(j+2)X_3X_4 |X_1(i+1)X_2X_3(k+3)X_4|Y_2  \\
& \le Y_1|X_1iX_2(j+1)X_3(k+2)X_4| Y_2 || Y_1|X_1X_2(j+2)X_3X_4 |X_1(i+1)X_2X_3(k+3)X_4|Y_2 \\
& \le Y_1|X_1iX_2(j+1)X_3(k+2)X_4| Y_2 || Y_1|X_1(i+1)X_2(j+2)X_3(k+3)X_4|Y_2 \\
&  \le Y_1|X_1i(i+1)X_2(j+1)(j+2)X_3(k+2)(k+3)X_4| Y_2  
\end{align*}
And the claim follows by juxtaposing $Z_1, Z_2$ at the extremes. In the first equality, we used high split and juxtaposition of the right monomial; in the second, high-split and juxtaposition on the left. The third inequality follows from the low split lemma. The other terms can be analyzed in a similar fashion.
\end{description}
\end{proof}

\begin{remark} \label{casem=2}
Let us conclude the section by stressing that the strategy used for $m=3$ works for $m=2$, with explicit formulas for all higher differentials. The structure, however, has no similar applications, as the non-collapse of the Sinha Spectral Sequence in finite characteristics is already known (see Section \ref{sp-seq-intro}). The $k$-th differential $D_k : \MFN_2(n)_{\bullet} \to \MFN_2(n+k)_{\bullet+k-1}$ is given as a sum
$$ D_k =\sum_{\substack{I \subset \{1,\ldots,n\} \\ |I|=k}}D_I $$
such that, if $I=\{i_1 < \ldots < i_k\}$, we have:
$$ D_I( \bar{Y}_1 | \bar{X}_1 i_1 \bar{X}_2 \ldots \bar{X}_k i_k \bar{X}_{k+1} | \bar{Y}_2) = \hat{D}_I(\Delta_{I} \bar{X}, \Delta_{I} \bar{Y}, \Delta_{I} \bar{Z} )$$
With
$$\hat{D}_I:= Y_1 | X_1 i_1 X_2 \ldots X_k (i_k+k-1) X_{k+1} | X_1 (i_1+1) X_2 \ldots X_k (i_k+k) X_{k+1} | Y_2  $$
for all numerical 0-clouds $\bar{Y}_1,\bar{Y}_2$ and numerical 1-clouds $\bar{X}_1, \ldots, \bar{X}_k$. Notice that there are no $2$-clouds in dimension $m=2$, as the maximum depth on Fox-Neuwirth trees is one.

\end{remark}

\section{User's Guide to the Higher Differentials}\label{usersguide}

In this section we describe the MATLAB software we created to implement the multicomplex differentials of $\MFN_3$. The routines have been exploited to show the following

\begin{theorem} \label{nc-theorem}
 The Sinha Spectral Sequence $E_{p,q}^r$ in homology does not collapse at page $3$ for ambient dimension $m=3$ and $\F_2$ coefficients.
\end{theorem}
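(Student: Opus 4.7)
The plan is to exploit the explicit 3-truncated multicomplex $\MFN_3$ constructed in Section \ref{fn-mcpx-3} and Theorem \ref{sinha-mcpx}, which reduces the question to finding a nonzero $d^3$-differential in the truncated spectral sequence of $\MFN_3$. By Turchin's computation we know $E^3_{6,8}\cong(\F_2)^2$, and only one of these two classes is 2-torsion: the other is a ``framing'' class arising from $H_*(\Omega^2 S^2)$ which, by the splitting of $\overline{\Emb}_3$ recalled in Section \ref{sp-seq-intro}, must survive to $E^\infty$ and hence cannot be the source of a nontrivial differential. Thus the first step is to pin down a cycle $vas\in \MFN_3(6)_8$ representing the 2-torsion generator $[vas]\in E^3_{6,8}$; this can be done by starting from Turchin's combinatorial description of $E^2_{6,8}$ and expressing his generator as an $\F_2$-linear combination of Fox--Neuwirth cells of dimension $8$ in $\NBZ_3(6)$, and then verifying directly that $D_0(vas)=0$.

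Next I would perform the multicomplex ``tic-tac-toe'' described in Section \ref{multicpx-back}: using the explicit formulas in Definitions \ref{d1-fn} and \ref{d2-fn}, compute $D_1(vas)\in\MFN_3(7)_8$, solve the linear system $D_0(x)=D_1(vas)$ for some $x\in\MFN_3(7)_9$ (this is possible since $vas$ is a cycle and $D_1$ commutes with $D_0$), then compute $D_2(vas)+D_1(x)\in\MFN_3(8)_9$ and solve $D_0(y)=D_2(vas)+D_1(x)$ for some $y\in\MFN_3(8)_{10}$ (solvability is guaranteed by the multicomplex equations $\sum_{i+j=2}D_iD_j=0$ and $\sum_{i+j=3}D_iD_j=0$ proved in Lemma \ref{slice-mcpx-lemma}). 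The candidate for the differential is then
\[
\Psi := D_1(y)+D_2(x)+D_3(vas)\in \MFN_3(9)_{10},
\]
which the formula in Section \ref{multicpx-back} shows represents $d^3[vas]\in E^3_{9,10}$.

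The remaining task is to show that $[\Psi]\neq 0$ in $E^3_{9,10}\cong(\F_2)^{11}$. Concretely this means verifying that $\Psi$ is not of the form $D_0(a)+D_1(b)+D_2(c)$ for $a\in \MFN_3(9)_{11}$, $b\in\MFN_3(8)_{10}$, $c\in \MFN_3(7)_9$ satisfying the boundary equations recorded in Section \ref{multicpx-back}. Equivalently, one reduces $\Psi$ modulo $D_0$-boundaries to a class in $H_{10}(\Conf_9(\R^3);\F_2)$ and compares against the list of $E^2_{9,10}$-representatives coming from Turchin's tables; the class must land outside the image of the two incoming horizontal differentials.

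The hard part is purely computational: the linear system $D_0(y)=D_2(vas)+D_1(x)$ in $\MFN_3(8)_{10}$ is enormous, since the dimension of $\MFN_3(8)_{10}$ is very large (tens of thousands of Fox--Neuwirth trees). Naively Gaussian elimination is out of reach. The key observation that makes the computation feasible, described in Section \ref{usersguide}, is that the whole multicomplex is $\Sigma_n$-equivariant in a suitable sense, so the system descends to a problem in group rings of progressively smaller symmetric groups; running the MATLAB implementation then produces an explicit $y$ in a few hours on a PC with 32Gb RAM. Once $y$ is found, computing $\Psi$ and checking its non-triviality by a second (smaller) Gaussian reduction is straightforward and yields Theorem \ref{nc-theorem}.
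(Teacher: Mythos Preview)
Your proposal is essentially the paper's own argument: represent the Vassiliev generator of $E^3_{6,8}$ by an explicit Fox--Neuwirth cycle $vas$, run the multicomplex tic-tac-toe to produce $\Psi=D_1(y)+D_2(x)+D_3(vas)$, and check that its homology class lies outside $\mathrm{Im}(d^1)\subset E^1_{9,10}$.

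A few of your justifications are slightly off, though none is fatal. First, that $D_1$ commutes with $D_0$ only shows $D_1(vas)$ is a $D_0$-\emph{cycle}; it is a $D_0$-\emph{boundary} precisely because $[vas]\in E^2$ means $d^1[vas]=0$. Second, the multicomplex identities likewise only make $D_2(vas)+D_1(x)$ a $D_0$-cycle in $\MFN_3(8)_9$; the paper invokes $H_9(\Conf_8(\R^3))=0$ (homology of $\Conf_n(\R^3)$ is concentrated in even degrees) to conclude it is a boundary. Third, it is only $D_0$ that is $\Sigma_n$-equivariant, not the full multicomplex---the higher $D_i$ depend on the labels---but since the equivariant Gauss reduction is applied only to the systems $D_0(x)=c$ and $D_0(y)=c'$, this is exactly what is needed. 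Finally, your description of the two classes in $E^3_{6,8}$ as ``one is 2-torsion'' is a bit garbled: over $\F_2$ the relevant distinction is that one generator is the framing class $Q_1(\iota)^2$ and the other is the Vassiliev class $v$ (a Tor-class arising from integral 2-torsion in $E^2_{7,8}$), and the paper only says it \emph{believes} the framing classes survive, so it simply computes $d^3(v)$ directly rather than relying on that.
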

 In this section we use the alternative notation 
$  \FN(k,d) :=  \MFN_3(k)_d $, and all coefficients are taken in $\F_2$.

Let us begin by implementing Fox-Neuwirth trees.

\subsection{Fox-Neuwirth generators}

Recall that a Fox-Neuwirth generator of $\FN(k,d)$ is determined by a permutation in the symmetric group $\Sigma_k$ and by an ordered partition of $d$ as sum of $k-1$ values in $\{0,1,2\}$ (see Section \ref{configurations}). For example, the permutation $P=[1 \,3\, 2 \,4]$ and the triple $N=[1 \,0 \,2]$ represent a generator in $\FN(4,3)$.  In general we represent a sum of generators of this form as a pair of matrices $P,N$ with the same number of rows, and respectively $k$ and $k-1$ columns, such that the rows of $P$ are permutations, and the corresponding rows of $N$ are ordered partitions as above. 

\begin{software} The routine 
\begin{code}
M = partfox(d,k-1,2)
\end{code}
creates a matrix $M$ with $k-1$ columns where the rows are the ordered partitions of $d$ into $k-1$ values in $\{0,1,2\}$, and thus correspond to generators of $\FN(k,d)$ modulo the symmetric group $\Sigma_k$, that are generators of the unordered complex $\textrm{UFN}(k,d)=\FN(k,d)/\Sigma_k$. The number of the rows is the coefficient of $x^d$ in the polynomial $(1+x+x^2)^{k-1}$.
\end{software}
\begin{software} The differential $D_0$ is implemented in the routine
\begin{code}
[DP, DN] = dfn3(P,N)
\end{code}
using the representation for linear combinations of Fox-Neuwirth generators via a pair of matrices.
\end{software}
For example, for $P=[1\, 3 \,2 \,4]$ and $N=[1 \,0\, 2]$,  we obtain 
$$DP=\begin{bmatrix} 3 1 2 4 \\ 1 3 2 4 \\ 1 3 4 2 \\ 1 3 2 4 \end{bmatrix} \quad
DN=\begin{bmatrix} 0 0 2 \\ 0 0 2 \\ 1 0 1 \\ 10 1 \end{bmatrix} \ $$
since $$D_0(1|3||24)=3||1||24+1||3||24+1|3||4|2+1|3||2|4.$$
There is also a routine that writes down the matrix of 
$$D_0:\FN(k,d) \to \FN(k,d-1)$$  
in equivariant form, since $D_0$ is equivariant with respect to action of the symmetric group $\Sigma_k$.
\begin{software} The routine
\begin{code}
[I,J,p,t,s] = fn3ter(k,d)
\end{code}
outputs a sparse matrix with values in $\F_2[\Sigma_k]$, in the following format. The rows correspond to generators of $\FN(k,d-1)$ modulo $\Sigma_k$, and the columns to generators of $\FN(k,d)$ modulo $\Sigma_k$. Notice that $\FN(k,d)$ is a free left $\F_2[\Sigma_k]$-module on partitions $\{x_1,\dots,x_j,\dots\}$  associated to the columns, and similarly $\FN(k,d-1)$ is a free left $\F_2[\Sigma_k]$-module on partitions $\{y_1,\dots,y_i,\dots\}$  associated to the rows. Writing 
$$D_0(x_j)=\sum_{i,j}  p_{i,j} y_i$$
with $p_{i,j} \in \F_2[\Sigma_k]$, the matrix will contain in the position $(i,j)$ the element $p_{i,j}$. Since we store the matrix in a sparse format, this means that for some index $h$ we have $I(h)=i, J(h)=j, p(h)=p_{i,j}^c$, where $p_{i,j}^c$ is the number identifying some permutation that is a summand of $p_{i,j}$. The outputs $t,s$ contain respectively the number of rows and the number of columns of the matrix.
\end{software}
Let us proceed to explain how to work with (co)homology classes of configuration spaces.

\subsection{(Co)-Homology of configuration spaces}
Recall that the cohomology of $\Conf_k(\R^3)$ in dimension\footnote{Note that the cohomology is concentrated in even dimensions \cite{Salvatore2000}.} $2d$ is generated by classes of the form
$$\prod_{l=1}^d \alpha_{i_l ,j_l} \ , $$
where $\alpha_{i,j}=\pi^*_{i,j}([S^2]),$  $\pi_{i,j}:\Conf_k(\R^3) \to S^2$ picks the direction from the $i$-th to the $j$-th point, and
$[S^2] \in H^2(S^2)$ is the fundamental class.
\begin{software} A basis of $H^{2d}(\Conf_k(\R^3))$ is computed by the routine 
\begin{code}
G = genera(k,d)
\end{code}
that outputs an array listing all matrices of shape $2 \times d$ with values in $\{1,\dots,k\}$
$$\begin{bmatrix} j_1 \dots j_d \\ i_1 \dots i_d \end{bmatrix}$$ 
such that $j_1 < \dots <j_k$ and $i_l < j_l$ for each $l=1,\dots,k$, which represents the cohomology class 
$$\prod_{l=1}^d \alpha_{i_l ,j_l} \ . $$
\end{software}
For example, $\texttt{genera}(4,2)$ is an array $2 \times 2 \times 11$ because $H^4(\Conf_4(\R^3))$
is 11-dimensional, and the last generator is
$$\texttt{G(:,:,11)}= \begin{bmatrix} 3  & 4 \\ 2 & 3 \end{bmatrix} \ ,$$ 
that represents $\alpha_{2,3} \alpha_{3,4}$.

It is useful to consider the normalized (co)homology.
 Consider the maps $$s_j:\Conf_{k}(\R^3) \to \Conf_{k-1}(\R^3)$$
  forgetting the $(j+1)$-th point and renumbering. The normalized homology is 
 $$NH_*(\Conf_{k}(\R^3))   =  \cap_{i=0}^{k-1} \textrm{ker}(s_i)_* \subset H_*(\Conf_{k}(\R^3)),$$ and dually the normalized cohomology is obtained as a quotient
  $$NH^*((\Conf_{k}(\R^3)) = H^*((\Conf_{k}(\R^3)) /   \sum_{i=0}^{k-1} Im(s_i^*).$$

\begin{software} A basis for the normalized cohomology of $\Conf_k(\R^3)$ in dimension $2d$ is provided by the routine 
\begin{code}
G = corde(k,d)
\end{code}
that generates an array of matrices similar to \texttt{genera}, but where {\em all} numbers between $1$ and $k$ appear.
\end{software}

For example, $\texttt{corde(4,2)}$ is a list of three matrices, representing respectively
$\alpha_{2,3}\alpha_{1,4}, \,  \alpha_{1,3} \alpha_{2,4}$ and $\alpha_{1,2} \alpha_{3,4}$.

\

The homology basis dual to \texttt{genera} has generators represented by "iterated planetary systems"  \cite{Salvatore2000}, i.e. embeddings $(S^2)^d \to \Conf_k(\R^3)$ such that 
\begin{itemize}
\item for each  factor $\alpha_{i_l, j_l}$ there is a point $j_l$ orbiting around $i_l$ 
(notice that $i_l <j_l$);
\item for "planets" orbiting around the same "sun" a smaller index corresponds to a larger orbit. 
\end{itemize}
This duality restricts to a duality between the normalized cohomology basis by \texttt{corde} and the normalized homology basis by planetary systems with no "isolated stars". 

\subsection{The multicomplex differentials}
The truncated multicomplex structure on $\FN$ introduced in Section \ref{fn-mcpx-3} results in linear maps
$$D_i:\FN(k,d) \to \FN(k+i,d+i-1), \ \ \ 0 \le i \le 3 $$
with $D_0$ being the differential of the chain complex $\FN(k,\bullet)$ seen in the previous subsection, and $D_1$ resembling the differential induced by the Kontsevich cosimplicial structure (see Def. \ref{d1-fn}). Because of Thm. \ref{sinha-mcpx}, the implementation of these maps is the first step towards computations in the Sinha spectral sequence.
\begin{software}
The routine computing the first differential $D_1$ is implemented as
\begin{code}
[D1P, D1N] = d1fn3veloce(P,N)
\end{code}
The algorithm sums $(k+2)$ contributions, corresponding to doubling each of the $k$ points, plus adding a new point at one of the two extremes.
\end{software}
For example, for $\texttt{P}=[1 2],\, \texttt{N}=2$ we obtain 
$$\texttt{D1P}=\begin{bmatrix} 2 1 3 \\ 1 3 2 \end{bmatrix}, \ \ \texttt{D1N}=\begin{bmatrix} 0 2\\ 2 0 \end{bmatrix} $$
since $D_1(12)=D_1^0(12)+D_1^1(12)+D_1^2(12)+D_1^3(12)= 1||23+(13||2+1||23)+(12||3+2||13)+12||3=
 2||13+13||2.$
The next differential $D_2$ doubles pairs of points that have at least a coordinate in common, distributes the others, and renumbers (see Def. \ref{d2-fn}). The formula for the duplication depends on whether the number of coordinates in common is one or two.
\begin{software}
The routine computing the second differential $D_2$ is implemented as
\begin{code}
D2PD2N = d2gen(P,N)
\end{code}
Here the matrices of permutations and partitions are stacked together in a single matrix, as opposed to the case of $D_1$ and $D_0$. In other words, if $\texttt{P}$ has $k$ columns, then the first $k+2$ columns of $\texttt{D2PD2N}$ form a matrix $\texttt{D2P}$ having permutations as rows and the last $k+1$ columns of $\texttt{D2PD2N}$ form a matrix $\texttt{D2N}$ that has ordered partitions as rows. 
\end{software}
For example, if $\texttt{P}=[1 2], \, \texttt{N}=2$ then 
$$\texttt{D2PD2N}=\begin{bmatrix} 1 3 2 4 \; 2 0 1 \\3 1 2 4 \; 1 0 2 \end{bmatrix}$$
since $D_2(12)=13||2|4+3|1||24$.
Finally, we implement a partial computation of $D_3$, that doubles triples of points with coordinates in common, distributes and renumbers. The implementation applies to generators with ordered partitions with no $1's$ appearing and with at most three consecutive $2's$. 
\begin{software}
The routine computing the third differential $D_3$ is implemented as
\begin{code}
D3PD3N = d3fn3(P,N) \ .
\end{code}
It provides the summands of $D_3$ with no $1's$ in the ordered partition. As in the case of $D_2$, the output is formatted as a single matrix stacking together
a matrix with $k+3$ columns with permutations as rows,  and a matrix with $k+2$ columns with ordered partitions as rows.
\end{software}
For example,
$$\texttt{d3fn3}([1  2  3 ],[2  2 ])= \begin{bmatrix}
 1    3     2    5     4     6   \;  2     0     2     0    2     \\
 3     5     1     6     2     4   \;   2     0     2     0   2
\end{bmatrix} $$
since the only summands in $D_3(123)$ with no single bars (i.e. $1's$ in the partition) are $13||25||46$ and $35||16||24$. 
\subsection{The $E_2$ term and Turchin's calculations}
Recall from Section \ref{kons-cosimp} that the Kontsevich spaces, homotopy equivalent to configuration spaces, have a cosimplicial structure.
Thus the homology of the configuration spaces forms a cosimplicial graded vector space,
with codegeneracies induced by the $s_i$ and cofaces 
induced by maps $$d_i:\Conf_k(\R^3) \to \Conf_{k+1}(\R^3)$$  such that   
\bit
\item $d_i$ "doubles" the $i$-th point for $i=1,\dots,k$ and renumbers
\item $d_0$ "inserts" a point at $-\infty$ and renumbers
\item $d_{k+1}$ "inserts" a point at $+\infty$.
\eit

The first page of the Sinha spectral sequence is 
$$\overline{E}^1_{k,2d} =H_{2d}(\Conf_k(\R^3))$$ 
with basis provided by \texttt{genera(k,d)} as seen in the previous section.
Notice that we change the sign of $k$ with respect to the standard bigrading of a cosimplicial homology spectral sequence,
so that in the spectral sequence $d^r$ has bidegree $(r,r-1)$ as the differential $D_r$ of the multicomplex $\FN$. 

The normalized homology 
$E^1  \subseteq \overline{E}^1 $ has a basis provided by \texttt{corde(k,d)}, that is a subset of the full basis.
The differential $d^1$ of the first page of the spectral sequence is the sum $d^1=\sum_i (d_i)_*$.
By a standard argument the $d^1$ restricts to $E^1$, and the inclusion $(E^1,d^1) \subset  (\overline{E^1},d^1)$ is a quasi-isomorphism, so that $H(E^1,d^1) \cong H(\overline{E^1},d^1) \cong E^2$,
We implement the computation of the differential 
$$d^1_{k-1,2d}: E^1_{k-1,2d} \to E^1_{k,2d}$$ 
by 
\begin{code}
A=bousfield(k,d)
\end{code}
where $A$ is a sparse matrix with as many rows as the dimension of 
$E^1_{k,2d}$ and as many columns as the dimension of $E^1_{k-1,2d}$.
This allows the computation of the dimension of the $E^2$-term. Our computations
confirm those by Turchin \cite{tourtchine2005bialgebra}.

Remember that $E^\infty_{k,2d}$ wants to compute a subquotient of $H_{2d-k}(\overline{Emb_3})$, where
$\overline{Emb_3} \simeq Emb_3 \times \Omega^2 S^2$ is the space of long knots modulo immersions, 
and $Emb_3$ is the space of long knots in $\R^3$. 
However the spectral sequence converges to the homology of spaces of knots only in higher dimensions. 
There is also a version of the spectral sequence $\underline{E}$ for the homology of the space of long knots $Emb_3$ 
with $\underline{E}^1_{k,2d}=H_{2d}(Conf_k(\R^3) \times (S^2)^k) $.
Moreover there is a homomorphisms of spectral sequences 
$E \to \underline{E}$ that wants to compute the map in homology induced by the projection $\overline{Emb}_3 \to Emb_3$.

Turchin shows in \cite{tourtchine2005bialgebra} that the non-zero part of $E^2_{k,2d}$ is concentrated in degrees such that $d< k \leq 2d$;
he also shows that there is a differential bigraded algebra structure on $E^1$
and $H(E^1)=E^2$ is the tensor product of $\underline{E}^2$ and a free polynomial algebra on {\em framing classes}
$\iota, Q_1(\iota), \dots, Q_1Q_1(\iota), \dots $ such that 
$\iota \in E^2(2,2),  \, Q_1(\iota) \in E^2(3,4), \, Q_1Q_1(\iota) \in E^2(5,8)$
and in general $Q_1^n(\iota) \in E^2(2^n+1,2^{n+1})$. We can consider this free polynomial algebra as
the homology of the union of unordered configuration spaces in $\R^2$, and as
a subalgebra of $H_*(\Omega^2 S^2)$,
with  $Q_1^n \in H_{2^n-1} (\Omega^2 S^2)$. 

Here are some cycle representatives of the generators in $E^1$: 
$$\iota=[(\alpha_{1,2})^*]$$
$$\iota^2=[(\alpha_{1,2}\alpha_{3,4} )^*]$$
$$Q_1(\iota)= [(\alpha_{1,2} \alpha_{1,3})^*]$$

We believe that these classes survive to the term
$E_\infty$, and so the potential non trivial differentials involve "Vassiliev" class from the factor $\underline{E}^2$.

The first such class  is
$$\beta=[(\alpha_{1,3}\alpha_{2,4} )^*] \in E^2(4,4)$$
that together with $\iota^2$ spans $E^2(4,4)$.

We are interested in a class in $E^2(6,8)$. This vector space has dimension 2, generated by 
$$Q_1(\iota)^2 = [(\alpha_{1,2} \alpha_{1,3} \alpha_{4,5} \alpha_{4,6})^*]$$
and a Vassiliev class $v \in \underline{E}^2(6,8)$.
This class is represented by a sum of 36 generators out of the 130 in $E^1(6,8)$.
To compute it we first do
\begin{code}
K=ker(bousfield(7,4))
\end{code}
This expresses a basis of $ker(d^1:E^1(6,8) \to E^1(7,8))$ with respect to the basis 
of $E^1(6,8)$, namely \texttt{K} is a matrix $130 \times 25$ such that each column is the coordinate vector of a basis element of the kernel
with respect to the basis of $E^1(6,8)$.
In particular the first basis vector of \texttt{K} is exactly the representative above of
$Q_1(\iota)^2$, that is the 79th element in the basis of $E^1(6,8)$.

It turns out that 
the $130 \times 24$  matrix 
\begin{code}
bou64=bousfield(6,4)
\end{code}
 expressing $$d^1:E^1(5,8) \to E^1(6,8)$$ has rank $23$. If we add the first column of $K$ to \texttt{bou64} the rank increases to $24$, since we have added the first generator of $E_2$ not in the image of $d^1$. If we keep adding columns the ranks remains constant until we add the 7th column of \texttt{K} (the rank becomes 25), so that column is the representative $v$ we are interested in. The 36 generators of $E^1(6,8)$ whose sum is $v$ are in position
 
 1     5     7    12    13    14    19    20    22    23    25    28    30    33    35    36    41    42    45    47    48    50    51  53    54    56    61    65    67    68    73    74    75    76   102   109

This list is obtained by the command 
\begin{code}
find(K(:,7))
\end{code}

Our objective is to show that $d^3(v)$ is not trivial.
($d^2$ is always trivial for dimensional reasons).

\subsection{Fox Neuwirth (co)cycles and (co)homology}

We explain how to construct (co)cycle representatives of (co)homology basis generators in the Fox Neuwirth complex.
There is a basis of the (co)homology of the configuration spaces, due to Sinha, that is different from the planetary system basis. Consider first the top-dimensional cohomology group $H^{2(k-1)} (\Conf_k(\R^3))$, that  has dimension $(k-1)!$. The Sinha basis 
contains all elements of the form
 $$\alpha_{1,i_1} \alpha_{i1,i2} \dots  \alpha_{i_{k-2},i_{k-1}}$$
 where $(i_1  \dots i_{k-1})$ varies among the permutations of $\{2, \dots ,k\}$.

\medskip

Geometrically, if we want to evaluate this class on a transverse cycle of dimension $2(k-1)$, we should count (mod 2) how many configurations in the cycle have all points $(x_i,y_i,z_i)$ ordered vertically, i.e. with $x_1=\dots=x_k, \, y_1=\dots,y_k$, and   $z_1 < z_{i_1} < \dots < z_{i_{k-1}}$. In the Fox Neuwirth complex this cohomology class is represented by the cocycle  sending the tree
$(1<_2 i_1 <_2 \dots <_2 i_{k-1})=1i_1\dots i_{k-1}$ to 1 and all other trees to zero.
  
  \
 More generally an element of the Sinha cohomology basis in any dimension  will be given by a collection of vertical configurations such that the lower point (with respect to the coordinate z) in each vertical configuration has the lower index appearing in it, and distinct configurations have no coordinates in common. If we ask that these vertical configurations are ordered from left to right according to their lower index,  we can associate
 to each cohomology class of the Sinha basis a Fox-Neuwirth tree, that we call {\em Sinha tree}.  For example the cohomology class $\alpha_{1,5}\alpha_{5,3}\alpha_{2,4}$  
is associated to the tree $(1<_2 5 <_2 3 <_0 2 <_2 4)=153||24$. The "odd" configuration comes before the "even" because $1 < 2$.
We warn that the cochain sending this Fox Neuwirth tree to 1 and the others to 0 is NOT a cocycle when the points are not all vertically aligned. 
A cocycle representing a cohomology class of the Sinha basis will in general evaluate to 1 on all trees in which the given vertical configurations appear, permuted in all possible ways.
For example a representing cocycle of the class above 
is the sum of the duals of $1<_2 5 <_2 3 <_0 2 <_2 4$ and $2 <_2 4 <_0 1<_2 5 <_2 3$, i.e. 
$(153||24)^*+(24||153)^*$.
 
\
Now we go back to the planetary system basis. Each product of the form $\prod_{s=1}^l \alpha_{i_s,j_s}$ in the basis defines 
 a graph with $k$ vertices, with an edge from $i_s$ to $j_s$ for $s=1,\dots,l$.
We can write down a representing cycle of the corresponding homology generator in the FN complex, inspired 
by the geometry of the planetary system representing the class.
  If the graph is connected, the cycle has a summand for each configuration that is vertically aligned. When the graph is not connected, we order its components from left to right according to the lowest indices similarly as above, and consider configurations that are vertically aligned for each connected component.

For example the class dual to $\alpha_{1,2}$  is represented by the sum of two generators
$12+21$, the class dual to $\alpha_{1,3}\alpha_{2,4}$ is represented by
$13||24+31||24+13||42+31||42$,
and more generally a homology basis generator associated to a graph with $h$ edges is represented by a cycle with $2^h$ tree summands.
The routine 
\begin{code}
PN=ciclofn3(x)
\end{code}
produces a cycle representative of any given element $x \in H_{2d}(\Conf_k(\R^3))$, 
presented as a $2 \times d \times m$ array if it is the linear combination of $m$ planetary generators. The output \texttt{PN} is a matrix with $2k-1$ columns, obtained by stacking together the permutation part \texttt{P} (with $k$ columns) and the partition part \texttt{N}
(with $k-1$ columns). 
Notice that the partition part contains only $2's$ or $0's$, but not $1's$.
\

In particular we obtain a cycle representative $vas$ of $v$ as follows:
\begin{code}
C=corde(6,4); \;  vas = ciclofn3(C(:,:,find(K(:,7)))); 
\end{code}
This is the linear combination of 328 generators in $\FN(6,8)$.
For example the first generator has \texttt{P=[1 6 5 4 2 3]} and \texttt{N=[2 2 2 0 2]}, i.e. it is $1654||23$.

\subsection{The algorithm of equivariant reduction}

We need to be able to solve the equation $D_0(x)=c$ in $x$ for a given boundary $c$ in the Fox Neuwirth complex.
This is the bottleneck of our computation. We must use the equivariance of $D_0$ with respect to the symmetric group to construct an efficient algorithm, otherwise time and memory would not be sufficient.
Given free {\em right} $\Sigma_k$-modules $N$ and $M$ of respective ranks $n$ and $m$, a $\Sigma_k$-equivariant  homomorphism $D:N \to M$  can be represented in coordinates by multiplication by an $m \times n$ matrix $A$ with values in 
$\F_2[\Sigma_k]$. Given $c \in N$, let 
$v_c$ be its coordinate vector. We look for $x \in M$ such that $D(x)=c$. If $v_x$ and $v_c$ are the respective coordinate vectors of $x$ and $c$, the equation becomes
$A v_x = v_c$. The idea is to perform Gauss elimination, as long as we have elements of $\F_2[\Sigma_k]$ that are easily invertible: summands with a single permutation. When we run out of these, we consider just equivariance with respect 
to $\Sigma_{k-1}$: the matrix size will increase to $m*k \times n*k$ but the values will be
in $\F_2[\Sigma_{k-1}]$, and we will have potentially invertible elements.
We continue this procedure passing to $\Sigma_{k-2}$, and so on, until one solution is found, if it exists. This algorithm seems to work well. It is implemented in the following routine:
\begin{code}
 [t,Ix,px]=solvsimmnew(I,J,p,m,n,k,Ic,pc,cpk) 
\end{code}
Here we need to use the MATLAB command \texttt{perms(1:k)} that provides a
$k! \times k$ matrix where the rows are the elements of $\Sigma_k$, listed in reverse lexicographic order. In particular the last element is the identity. So any permutation is identified by the number of its row. In the input we need to prepare a matrix \texttt{cpk} of size  $k! \times k!$ with values in $\{1,\dots,k!\}$ encoding the composition of permutations. This speeds up computations. The vector $v_c$  is encoded in sparse 
form by two MATLAB vectors \texttt{Ic,pc} of the same length with the pairs \texttt{(Ic(h),pc(h))}  all distinct,
encoding that the vector $v_c$ has at the entry number \texttt{Ic(h)} a summand permutation identified by its number \texttt{pc(h)}. Similarly the matrix $A$ is encoded in sparse form
by the vectors \texttt{I,J,p} of the same length, with the triples \texttt{(I(h),J(h),p(h))} all distinct,
encoding that the entry \texttt{(I(h),J(h))} of the matrix has a summand permutation identified by  
its number \texttt{p(h)}. The output provides the smallest $t$ such that we had to perform the reduction over $\Sigma_t$. In addition \texttt{(Ix,px)} encodes in sparse form the solution
$v_x$, when it exists. 

\

We have also an algorithm multiplying matrices  
in $\F_2[\Sigma_k]$ in sparse form, that is useful to verify that a solution is correct.
\begin{code}
 [Ic,Jc,pc]=moltsimmcp(Ia,Ja,pa,n,Ib,Jb,pb,k,cpk) 
 \end{code}
Here $A$ and $B$ are matrices in $\F_2[\Sigma_k]$ with number $n$ of columns of $A$ equal to the number of rows of $B$; they are encoded in sparse form as above respectively by \texttt{(Ia,Ja,pa)} and \texttt{(Ib,Jb,b)}. the output $C=AB$ is encoded in sparse form by \texttt{(Ic,Jc,pc)}.

\

Notice that the FN-complexes are naturally {\em left}-modules over the symmetric group algebras, so that we always need to pass to the right action before applying the reduction algorithm (both for the matrix and the known term), and then also convert the solution from right to left action.

\subsection{Computing the differential}

We recall from \ref{multicpx-back}
that the procedure to compute a differential in the spectral sequence 
defined by a multicomplex  is the following, starting with $vas \in \FN(6,8)$.

\begin{enumerate}
\item Find $x \in \FN(7,9)$ such that $D_0(x)=D_1(vas)$.
\item Find $y \in \FN(8,10)$ such that $D_0(y)=D_2(vas)+D_1(x)$.
\item Consider the homology class of the cycle 
$$D_1(y)+D_2(x)+D_3(vas) \in \FN(9,10)$$
and show that is a non trivial element of $E^2(9,10) \cong E^3(9,10)$.
\end{enumerate}

The computation of $D_1(vas) \in \FN(7,8)$ is performed by the command
\begin{code}
[pe1,nu1]=d1fn3veloce(vas(:,1:6),vas(:,7:11)); 
\end{code}
where as usual $\texttt{pe1}$ encodes permutations and $\texttt{nu1}$ partitions:
 \texttt{pe1} is a $4672 \times 7$ matrix and \texttt{nu1} is a $4672 \times 6$ matrix,
showing that $D_1(vas)$ is a sum of $4672$ generators. The first rows of the matrices stacked together are
respectively
$$	[1     7     6     5     2     4     3   ,\quad    2     2     2     0     2     0] $$
so the corresponding generator is $176||24||3$.
In order to solve the equation $D_0(x)=D_1(vas)$ we need to encode $D_1(vas)$
in sparse form as linear combination with coefficients in $\F_2[\Sigma_7]$. This is done by the command
\begin{code}
[Inoto,pnoto]=recofox([pe1 nu1])
\end{code}
producing the pair of vectors \texttt{(Inoto,pnoto)}.
We need a $7! \times 7!$  matrix \texttt{cp7} with values in $\{1,\dots,7!\}$ encoding the multiplication in $\Sigma_7$. We also need a vector \texttt{ip7} of length $7!$ with values in $\{1,\dots,7!\}$ encoding the number of the inverse permutation, in the sense
that the $i$-th entry of the vector is the number (in the reverse lexicographic order) of the inverse of the $i$-th permutation. 
These can be obtained by
\begin{code}
cp7=comppermnew(7);

ip7=invpermnew(7);
\end{code}
We pass from the left to the right action description  of our vector $D_1(vas)$ simply inverting its permutation coefficients by
\begin{code}
pnoto=ip7(pnoto);
\end{code}
Then we input the matrix of $D_0:\FN(7,9) \to \FN(7,8)$ by
\begin{code}
[I7,J7,p7,ii7,jj7]=fn3ter(7,9);
\end{code}
followed by  
\begin{code}
p7=ip7(p7);
\end{code}
 that also switches from left to right description.
Then we can solve the system 
\begin{code}
[dull,Isol,psol]=solvsimmnew(I7,J7,p7,ii7,jj7,7,Inoto,pnoto,cp7);
\end{code}
and convert the solution into left description by \texttt{psol=ip7(psol);}

The solution is the sum of 5088 generators. We want to convert it from a sparse vector form into an explicit list of FN trees. So we first compute the basis
of $\FN(7,9)$ modulo $\Sigma_7$ by
\begin{code}
base=partfox(9,6,2);
\end{code}
and then using the list of permutations \texttt{P7=perms(1:7);}
we write 
\begin{code}
pe7=P7(psol,:);   \; nu7=base(Isol,:); 
\end{code}
that is the desired list of generators of $x$.
For example the first rows of \texttt{pe7} and \texttt{nu7} are
\begin{code}
[ 3     6     5     4     2     7     1  ,\quad  2     1     2     0     2     2]
\end{code}
representing $36|54||271$.
Next we compute $D_1(x) \in \FN(8,9)$ by 
\begin{code}
[pe8,nu8]=d1fn3veloce(pe7,nu7);
\end{code}
The result has $140480$ summands, so \texttt{pe8} is a $140480 \times 8$ matrix
and \texttt{nu8} a $140480 \times 7$ matrix. Their first rows are
\begin{code}
[ 8     6     5     4     7     1     2     3,   \quad    0     0     2     1     2   2  2]
\end{code}
representing $8||6||54|7123$,
We convert $D_1(x)$ into sparse form by
\begin{code}
[I,p]=recofox([pe8 nu8] );
\end{code}
and then into right description by \texttt{p=ip8(p);} where 
the vector $ip8$ encodes the inverse permutation number in $\Sigma_8$, and is obtained as
\texttt{ip8=invpermnew(8);}

\

We also need to compute $D_2(vas)$. This is done by the routine
\begin{code}
d2=triald2fn3(vas(:,1:6),vas(:,7:11));  
\end{code}
that computes $D_2$ for a sum of trees with no $1$'s in the partition,
and then we put the result into sparse vector form by 
\begin{code}
[Ierr,perr]=recofox(d2);
\end{code}
and convert it to right module form by 
\begin{code}
perr=ip8(perr);
\end{code}
There are $13904$ summands in  $D_2(vas)$.

\

We compute the sum $D_1(x)+D_2(vas)$ in sparse vector form by the routine
\begin{code}
compl=agg([I',p'],[Ierr',perr']);  \; Icompl=compl(:,1)'; \;  pcompl=compl(:,2)'; 
\end{code}
The result has $140520$ summands.
We want to find next $y \in \FN(8,10)$ such that $D_0(y)=D_1(x)+D_2(vas)$.
This is possible because there are no non-trivial homology classes in degree 9 
and $D_1(x)+D_2(vas)$ is a cycle by the multicomplex identities. 

\

We encode the matrix of $D_0:\FN(8,10) \to \FN(8,9)$  by 
\begin{code}
[Ib,Jb,pb,ib,jb]= fn3ter(8,10);  \;  pb=ip8(pb); 
\end{code}
This is a $266 \times 161$ matrix with values in $\F_2[S_8]$.
We solve the system by 
\begin{code}
[dull,Ifine,pfine]=solvsimmnew(Ib,Jb,pb,266,161,8,Icompl,pcompl,cp8);
\end{code}
This is the longest computation: it takes almost 4 hours on a PC.
Here $cp8$ is a $8! \times 8!$ matrix with values in $\{1,\dots,8!\}$ encoding 
composition in $\Sigma_8$, obtained by
\texttt{cp8=comppermnew(8);}

Then we switch to left description by 
\begin{code}
pfine=ip8(pfine);
\end{code}
The result $y$ is a sum of $109910$ generators. To convert it into a list of FN generators we do 
\begin{code}
P8=perms(1:8); \; prima=P8(pfine,:); \; basis=partfox(10,7,2); \; seconda=basis(Ifine,:); 
\end{code}
For example the first row of \texttt{prima} and \texttt{seconda}  are
\begin{code}
[ 3     1     6     8     4     7     5     2, \quad 2     0     2     0     2     2     2 ]
\end{code}
representing $31||68||4752$.
\

Next we compute $D_1(y) \in \FN(9,10)$ by
\begin{code}
[pe,nu]=d1fn3veloce(prima,seconda);
\end{code}
The result has $4631462$ generators.
The first rows of \texttt{pe} and \texttt{nu} are
\begin{code}
 [9     7     6     5     4     8     1     2     3, \quad  0     0     2     1     1     2     2     2 ]
 \end{code}
representing $9||7||65|4|8123$.
We need to add $D_2(x)+D_3(vas)$  to $D_1(y)$ in order to get a cycle (by the multicomplex identities). The point is that in order to recognise the homology class of 
$D_1(y)+D_2(x)+D_3(vas)$ it is sufficient to evaluate it against the Sinha cocycles, and
$D_2(x)$ provides no contribution, because its generators have always a $1$ in the 
partition part. So it is enough to add $D_3(vas)$ to $D_1(y)$  and then evaluate this sum on the Sinha cocycles. For practical reasons we choose to evaluate first $D_1(y)$, then $D_3(vas)$, and then take the sum of the evaluations. 

\

First of all we get rid of the generators of $D_1(y)$ with a $1$ in the partition part, that are most of them, and do not contribute.
\begin{code}
[Ieh,Jeh]=find(nu==1); \; Ieh=unique(Ieh); \; buoni=setdiff(1:4631462,Ieh);  

pebuoni=pe(buoni,:); \; nubuoni=nu(buoni,:); 
\end{code}
In this way we cut down to $240802$ generators. The first rows of \texttt{pebuoni} and \texttt{nubuoni}
are
\begin{code}
[ 8     9     6     7     4     5     1     3     2 ,\quad  0     2     0     2     0     2     2     2]
\end{code}
representing $8||96||74||5132$.
Notice that each Fox Neuwirth tree evaluates non trivially with at most one Sinha cocycle.
For each generator of $D_1(y)$ that evaluates non trivially on a Sinha cocycle the routine   
\begin{code}
[dev,quanto]=sinha(pebuoni,nubuoni); 
\end{code}
adds a new row to the matrices \texttt{dev} and \texttt{quanto},
describing a tree that also evaluates non-trivially on that cocycle, and looks like a Sinha tree, i.e. a bunch of vertical configurations.
The only difference with a Sinha tree is that the possible "isolated point" is always on the right, regardless of its label
(whereas in the Sinha basis all components are ordered according to their lower indices, isolated or not).
The routine \texttt{sinha} is an ad hoc routine that knows the possible partitions 
in \texttt{nubuoni}. There can be repeated rows in the
stacked matrix \texttt{[dev \; quanto]}. 

 \texttt{dev} is a $17350 \times 9$ matrix and
\texttt{quanto} is a $17350 \times 4$ matrix .
As an example the first row of \texttt{[dev \; quanto]} is 
\begin{code}
[ 1     5     3     2     4     7     6     9     8 \quad  4     2     2     1 ]
\end{code}
meaning that some generator of $D_1(y)$ contains the vertical configurations 
$1532$ $47$ $69$ $8$ , separated by a set of double bars
(for example it could be $47||1532||8||69$).


We need to 
count rows mod 2, so we do
\begin{code}
[undev,iad,icd]=unique([dev \;  quanto],'rows'); 
\end{code}
finding that there are $7079$ distinct rows, and then 
\begin{code}
for \; indice=1:7079; \; nice(indice)=nnz(icd==indice); \; end  

nice=mod(nice,2); \; scremati=undev(find(nice),:); 
\end{code}
The result is a linear combination of $690$ Sinha trees, since most rows appear in pairs and cancel each other.
For example the first row of \texttt{scremati} is
\begin{code}
[ 1     3     2     5     4     8     6     7     9,    \quad  2     2     2     3]
\end{code}
It turns out that the rows are all non-degenerate, in the sense that there is no vertical configuration with a single point, i.e. no $1's$ appear on the right hand side. Thus the rows are actual Sinha trees, representing generators in normalized homology of the dual Sinha basis, and the matrix 
\texttt{scremati}
represents their sum. We want to write this homology class in terms of the standard planetary basis we used in homology.
We observe that there is a bijection between Sinha trees and planetary basis vectors: this associates to a planetary element the vertical configuration where the height  of each planet is greater than the height of the star it revolves around. 
For example $\alpha_{1,3} \alpha_{1,4} \alpha_{2,5}$ goes to 
$[1 4 3  2 5 \quad 3 2]$  since there is a vertical configuration with $z_1<z_4<z_3$, as
1 is a "star", 4 a "planet with small orbit", and 3 a "planet with large orbit"; 
similarly there is a vertical configuration with $z_2<z_5$ since 2 is a star with a planet 5.
We convert all non-degenerate (or normalized) planetary generators in $H_{10}(\Conf_9(\R^3))$,
spanning a $2520$-dim. vector space, into the associated Sinha trees by
\begin{code}
Msinha=cordesinha(corde(9,5));
\end{code}
so that \texttt{Msinha} is a matrix $2520 \times 13$.
Its first row is 
\begin{code}
[ 1     9     8     2     7     3     6     4     5,  \quad    3     2     2     2]
\end{code}
associated to $$\alpha_{4,5} \alpha_{3,6} \alpha_{2,7} \alpha_{1,8} \alpha_{1,9} $$
We can express the linear combination \texttt{scremati} by changing basis into the planetary basis by
\begin{code}
vettore=cambiobase(scremati,Msinha); 
\end{code}
Since here we have at most 3 points in a vertical configuration, appearing once, a Sinha generator will be the sum of two planetary generators if it has a vertical configurations $z_i < z_j  < z_k$ with $i<j<k$, and will be a planetary generator in all other cases (where $i<k>j$ or where there are just 2 points in each vertical configuration).
Now \texttt{vettore} is a vector with $2520$ representing the evaluation of $D_1(y)$ on appropriate cocycles for the dual planetary basis
(that are a single Sinha cocycle, or the sum of two of them).

\

We need to do a similar procedure for the term $D_3(vas)$:
we compute it in
\begin{code}
XN=d3fn3(vas(:,1:6),vas(:,7:11)); 
\end{code}
for example the first row of \texttt{XN} is 
\begin{code}
[ 1     8     6     9     4     7     5     2     3,   \quad  2     2     0     2 0 2 0 2 ]
\end{code}
i.e. $186||94||75||23$,
and \texttt{XN} has $3932$ rows.
The rows that evaluate non trivially against Sinha cocycles, similarly as before, are non-degenerate, in the sense that they are FN trees
of vertical configurations with "no isolated point", and without $1's$ as indices. 
Namely $vas$ is a sum of non-degenerate trees,
 and in the formula for $D_3$ the part that does not introduce $1's$ sends non-degenerate trees to sums of non-degenerate trees.

The routine

\begin{code}
[dev1,quanto1]=sinha1(XN(:,1:9),XN(:,10:17)); 
\end{code}
produces a matrix with a row that is a non-degenerate Sinha tree, for each non-trivial evaluations of a generator of $D_3(vas)$ on
a Sinha cocycle associated with that tree.
\texttt{sinha1} knows exactly which vertical configurations can occur in \texttt{XN}.
The first rows of \texttt{dev1} and \texttt{quanto1} are
\begin{code}
[  1     4     2     8     3     6     5     9     7, \quad  2 2 2 3],
\end{code}
and both \texttt{dev1} and \texttt{quanto1} have $184$ rows.
It is not necessary to reduce to a unique set of rows (there are only two repeated rows) since the routine \texttt{cambiobase} adds the contribution of each row mod 2.
\begin{code}
vettore1=cambiobase([dev1 \; quanto1],Msinha); 
\end{code}
This is the evaluation of $D_3(vas)$ on the cocycles for the planetary basis cohomology as above.

\

Finally we add the contibutions of $D_1(y)$ and $D_3(vas)$  mod 2
\begin{code}
vettoretot=xor(vettore,vettore1); 
\end{code}
finding the homology class $d^3(vas)$ of the {\em cycle} $D_1(y)+D_2(x)+D_3(vas)$, that is in normalized homology.

We check that the resulting vector  is in $ker(d^1)$.
The matrix 
\begin{code}
bou105=bousfield(10,5); 
\end{code}
 is a matrix $945 \times 2520$
representing $d^1:\FN(9,5) \to \FN(10,5)$. The command
\begin{code}
nnz(mod(vettoretot*bou105',2)) 
\end{code}
gives 0 so indeed $d^3(vas) \in ker(d^1)$.
Now we check whether $d^3(vas) \in Im(d^1)$.
The matrix
\begin{code}
bou95=bousfield(9,5); 
\end{code}
is a matrix $2520 \times 2380$ representing 
$d^1:\FN(8,5) \to \FN(9,5)$.
Its rank mod 2 is $1574$, computed with
\begin{code}
bitrango32(bou95)  
\end{code}
If we add an extra column, consisting of \texttt{vettoretot}, we obtain a matrix of 
rank $1575$, computed with
\begin{code}
bitrango32([bou95 \;  vettoretot'])  
\end{code}
This shows that $d^3(vas) \notin Im(d^1)$ and the differential is not trivial!

\subsection{Non-triviality of the differential in Emb}

As described earlier there is a morphism of spectral sequences $E \to \underline{E}$ induced by the projection $\pi: \overline{Emb} \to Emb$, and the projection at the level of $E^2$ term kills exactly the ideal generated by the framing classes.
Turchin shows that the multiplication on the $E^1$ term is defined by stacking graphs describing cohomology classes and renumbering,
and that $vas$ projects to a non trivial class in $\underline{E}^2 = \underline{E}^3$.
We show that also $d^3(vas)$ projects to a non trivial class in $\underline{E}^3$.

\begin{theorem}\label{ncc-theorem}
The differential $d^3:\underline{E}^3(6,8) \to \underline{E}^3(9,10)$ is non trivial. 
\end{theorem}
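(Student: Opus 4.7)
The plan is to leverage naturality of the spectral sequence morphism $\pi : E \to \underline{E}$ induced by the projection $\overline{\Emb}_3 \to \Emb_3$, combined with the explicit computation of $d^3[vas] \in E^3_{9,10} \cong (\F_2)^{11}$ carried out for Theorem \ref{nc-theorem}. Since $\pi$ commutes with differentials we have $\pi(d^3[vas]) = d^3(\pi[vas])$, and Turchin has already established that $\pi[vas]$ is the non-trivial generator of $\underline{E}^3_{6,8} \cong \F_2$. Hence it suffices to show that the explicit vector representing $d^3[vas]$ in $E^3_{9,10}$ is not sent to zero by the projection $\pi: E^3_{9,10} \to \underline{E}^3_{9,10} \cong (\F_2)^4$, i.e. does not lie in the kernel $F$, which has codimension $4$ and therefore dimension $7$.

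To make $F$ explicit I would invoke Turchin's multiplicative description of $E^2$: the kernel of $\pi$ on $E^2$ is the ideal generated by the framing classes $\iota \in E^2_{2,2}$, $Q_1(\iota) \in E^2_{3,4}$, $Q_1^2(\iota) \in E^2_{5,8}$, $\dots$, where multiplication on $E^1$ is realised by stacking graphs/cycles and renumbering labels. The bigrading $(9,10)$ is very restrictive, so only a short list of products contributes, involving $\iota$, $Q_1(\iota)$, $\iota^2$, $\iota \cdot Q_1(\iota)$ stacked with small Vassiliev factors of bidegree $(7,8)$, $(6,6)$, $(5,6)$, $(4,4)$ respectively. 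For each such product I would produce a planetary-style cycle representative via \texttt{ciclofn3} composed with the stacking operation, then run it through the exact same \texttt{sinha}/\texttt{cambiobase} pipeline already used for $d^3[vas]$, obtaining coordinate vectors in the same ambient space $\F_2^{2520}$.

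The conclusion is then a rank check over $\F_2$. Let $M$ be the matrix whose columns are the coordinate vectors of the framing generators enumerated above together with a basis of $\mathrm{Im}(d^1 : E^1_{8,10} \to E^1_{9,10})$; by construction the column span of $M$ modulo the $d^1$-image is exactly $F$. Adjoin to $M$ the coordinate vector of $d^3[vas]$ computed in the previous section and invoke \texttt{bitrango32} to check that the rank strictly increases. A strict increase means $d^3[vas] \notin F$, so $\pi(d^3[vas]) \neq 0$ in $\underline{E}^3_{9,10}$, proving the theorem.

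The main obstacle is bookkeeping rather than conceptual difficulty: correctly enumerating every framing product landing in bidegree $(9,10)$, transporting each of them through normalisation and the Sinha evaluation so that all vectors live in a common coordinate system, and then performing the $\F_2$-rank computation. A convenient shortcut, which avoids exhausting the list of framing products, would be to single out one Sinha cocycle in $E^1_{9,10}$ whose dual planetary graph is connected (hence represents an indecomposable class outside the framing ideal) and verify that the already-computed cycle $D_1(y) + D_2(x) + D_3(vas)$ pairs non-trivially with it; such a single non-zero pairing implies at once that $d^3[vas] \notin F$ and therefore that $d^3$ is non-trivial on $\underline{E}^3_{6,8}$.
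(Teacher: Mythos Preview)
Your main approach is essentially the paper's: naturality of $\pi$, identification of $\ker\pi$ on $E^2$ with the framing ideal, enumeration of the framing products landing in bidegree $(9,10)$, and a rank check of $d^3[vas]$ against the span of $\mathrm{Im}(d^1)$ together with coordinate vectors for those seven framing classes. The paper's explicit list is $c\,\iota^2$, $e\,Q_1(\iota)$, $Q_1(\iota)\,\iota^3$, $Q_1(\iota)\,\iota\,z$, and $\iota$ times three independent classes in $E^2(7,8)$; the rank of the augmented matrix jumps from $1581$ to $1582$. One small correction to your wording: the complementary factors need not be ``Vassiliev''---for instance $Q_1(\iota)\,\iota^3$ is a product of two framing classes---so the correct enumeration is framing times \emph{any} $E^2$ class in the appropriate bidegree, not framing times Vassiliev.

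Your proposed shortcut via a single connected Sinha cochain is not complete as stated. Connectedness of the planetary graph only guarantees that the corresponding \emph{cochain} on $E^1$ vanishes on all stacked products; to draw any conclusion at the $E^2$ level you must additionally verify that this cochain lies in $\ker((d^1)^*)$, since an element of the framing ideal in $E^2$ is represented by a framing product \emph{plus an arbitrary $d^1$-boundary}. Without the cocycle condition the pairing is not well defined on $E^2$-classes and a non-zero value proves nothing. The paper sidesteps this issue by doing the full rank computation rather than relying on a single evaluation.
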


There are 11 generators in $E^2(9,10)$, and 7 of them are in the ideal generated by the framing classes. 
Four of these seven classes are
$$c \iota^2, \, eQ(\iota) , \, Q(\iota)\iota^3,\, Q(\iota)\iota z$$ 
where $$c=(\alpha_{2,3} \alpha_{1,4} \alpha_{2,5} )^*+ (\alpha_{1,3} \alpha_{1,4} \alpha_{2,5})^* \in E^2(5,6)$$
$$e=( \alpha_{1,4} \alpha_{3,5} \alpha_{2,6} )^* \in E^2(6,6)$$
$$z=( \alpha_{1,3} \alpha_{2,4})^* \in E^2(4,4)$$
We represent them as vectors with 2520 entries in $\F_2$, since $E^1(9,10)$ has dimension 2520, by 
\begin{code}
C=corde(9,5); 

v1=recog(cat(3,[3 \, 4 \, 5 \, 7 \, 9; 2\,  1\,  2\,  6\,  8] ,[3\,  4\,  5\,  7\,  9; 1\,  1\,  2\,  6\,  8  ]),C); 

v2=recog( [4\,  5\,  6\,  8\,  9; 1\,  3\,  2\,  7\, 7]     ,C); 

v3= recog( [2\, 3\, 5\, 7\, 9; 1\, 1\, 4\, 6\, 8]  ,C); 

v4=recog(  [3\, 4\, 6 \,7\, 9; 1\, 2\, 5\, 5\, 8] ,C);                             
\end{code}
Other 3 of the seven classes are the multiplication of $\iota$ by 3 classes in $E^2(7,8)$
whose coordinate vectors with respect to the normalized planetary basis \texttt{corde(7,4)}  are respectively the columns n. 22,30 and 32
in the matrix 
\begin{code}
K=ker(bousfield(8,4))
\end{code}
and have respectively 35, 40 and 38 non zero entries.
The algorithm \texttt{piuc=piucorda} represents the linear map $E^1(7,8) \to E^1(9,10)$ that is multiplication by $\iota \in E^1(2,2)$ as a vector
with $210$ entries (the dimension of $E^1(7,8)$ ) with values in $\{1,\dots,2520\}$, since 2520 is the dimension of $E^1(9,10)$. 
We encode the coordinate vectors of our 3 classes by
\begin{code}
v5(2520)=0; \, v6(2520)=0; \, v7(2520)=0; 

v5(piuc(find(K(:,22))))=1; \, v6(piuc(find(K(:,30))))=1; \, v7(piuc(find(K(:,32))))=1; 
\end{code}
Adding these as columns to the matrix of $d^1:E^1(8,10) \to E^1(9,10)$ we obtain a matrix 
\begin{code}
M=[bou95 v1' v2' v3' v4' v5' v6' v7'];
\end{code}
of rank 1581. If we stack $d^3(vas)$ as an extra column we obtain a matrix \texttt{[M vettoretot']} of rank 1582, and this proves 
that $d^3(vas)$ is not in the ideal generated by the framing elements in $E^2(9,10)=E^3(9,10)$,
so its projection to $\underline{E}^3(9,10)$ is not zero.

\printbibliography

\end{document}